  \tikzstyle{root}=[rectangle,draw=blue!90]
  \tikzstyle{nonterminal}=[rectangle,rounded corners,fill=blue!15,draw=blue!15]
  \tikzstyle{terminal}=[rectangle]
  \tikzstyle{cut}=[thick,dotted,draw=green!50!black]
  \tikzstyle{local}=[color=green!50!black,text=green!25!black]
\DeclarePairedDelimiter{\group}{(}{)}
\DeclarePairedDelimiter{\sqgroup}{[}{]}
\DeclarePairedDelimiter{\set}{\{}{\}}
\DeclarePairedDelimiter{\avg}{\llbracket}{\rrbracket}
\DeclarePairedDelimiter{\norm}{\Vert}{\Vert}
\DeclarePairedDelimiter{\abs}{\vert}{\vert}
\newcommand{\frcsts}{\sqgroup{0,1}}
\newcommand{\imprecisefrcsts}{\mathcal{C}}
\newcommand{\outcomes}{\set{0,1}}
\newcommand{\pths}{\Omega}
\newcommand{\sits}{\pths^\lozenge}
\newcommand{\constantfrcst}[1]{\gamma_{\,#1}}
\newcommand{\average}[2][S]{\avg{#2}_{#1}}
\newcommand{\frcstsystem}{\gamma}
\newcommand{\lfrcstsystem}{\underline{\frcstsystem}}
\newcommand{\ufrcstsystem}{\overline{\frcstsystem}}
\newcommand{\xmplfrcstsystem}{{\frcstsystem_{\sim\nicefrac{1}{2}}}}
\newcommand{\prcsfrcstsystem}[1][p]{{\frcstsystem_{\set{#1}}}}
\newcommand{\frcstsystems}{\Gamma}
\newcommand{\constantfrcstsystems}{\imprecisefrcsts}
\newcommand{\process}{F}
\newcommand{\processtoo}{G}
\newcommand{\processalso}{H}
\newcommand{\selection}{S}
\newcommand{\multprocess}{D}
\newcommand{\mint}[1][\multprocess]{#1^\circledcirc}
\newcommand{\supermartin}{M}
\newcommand{\submartin}{M}
\newcommand{\submartins}[1][\frcstsystem]{\underline{\mathbb{M}}^{#1}}
\newcommand{\supermartins}[1][\frcstsystem]{\overline{\mathbb{M}}^{#1}}
\newcommand{\testsupermartins}[1][\frcstsystem]{\overline{\mathbb{T}}^{#1}}
\newcommand{\comptestsupermartins}[1][\frcstsystem]{\overline{\mathbb{T}}^{#1}_{\mathrm{C}}}
\newcommand{\test}{T}
\newcommand{\random}[2][\pth]{\frcstsystems_{#2}(#1)}
\newcommand{\constantrandom}[2][\pth]{\constantfrcstsystems_{#2}(#1)}
\newcommand{\lowerconstantrandom}[2][\pth]{L_{#2}(#1)}
\newcommand{\upperconstantrandom}[2][\pth]{U_{#2}(#1)}
\newcommand{\lowersmallestrandom}[2][\pth]{\lp[#2](#1)}
\newcommand{\uppersmallestrandom}[2][\pth]{\up[#2](#1)}
\newcommand{\varnorm}[1]{\norm{#1}_\mathrm{v}}
\newcommand{\naturals}{\mathbb{N}}
\newcommand{\naturalswithzero}{\mathbb{N}_0}
\newcommand{\reals}{\mathbb{R}}
\newcommand{\ex}{E}
\newcommand{\lex}{\underline{\ex}}
\newcommand{\uex}{\overline{\ex}}
\newcommand{\ljoint}[1][]{\lex^{#1}}
\newcommand{\ujoint}[1][]{\uex^{#1}}
\newcommand{\lp}[1][]{\underline{p}_{#1}}
\newcommand{\up}[1][]{\overline{p}_{#1}}
\newcommand{\pinterval}[1][]{\sqgroup{\lp[#1],\up[#1]}}
\newcommand{\lptoo}[1][]{\underline{q}_{#1}}
\newcommand{\uptoo}[1][]{\overline{q}_{#1}}
\newcommand{\qinterval}[1][]{\sqgroup{\lptoo[#1],\uptoo[#1]}}
\newcommand{\init}{\square}
\newcommand{\pth}{\omega}
\newcommand{\randout}[1][]{X_{#1}}
\newcommand{\precedes}{\sqsubseteq}
\newcommand{\xval}[1]{x_{#1}}
\newcommand{\zval}[1]{z_{#1}}
\newcommand{\xvalto}[1]{\xval{1},\dots,\xval{#1}}
\newcommand{\zvalto}[1]{\zval{1},\dots,\zval{#1}}
\newcommand{\capitalsymbol}{\mathcal{K}}
\newcommand{\pmass}[1][]{\widehat{p}_{#1}}
\newcommand{\comp}{computable}
\newcommand{\scomp}{semicomputable}
\newcommand{\lscomp}{lower semicomputable}
\newcommand{\uscomp}{upper semicomputable}
\newcommand{\cset}[3][]{\set[#1]{#2\colon#3}}
\newcommand{\ind}[1]{\mathbb{I}_{#1}}
\newcommand{\indsing}[1]{\ind{\set{#1}}}
\newcommand{\then}{\Rightarrow}
\newcommand{\ifandonlyif}{\Leftrightarrow}
\newcommand{\adddelta}{\Delta}
\begin{document}

% Title and authors
\title{Computable Randomness is Inherently Imprecise}

\author{\name Gert de Cooman \email gert.decooman@ugent.be\\
\name Jasper De Bock \email jasper.debock@ugent.be\\
\addr Ghent University, IDLab\\
Technologiepark -- Zwijnaarde 914, 9052 Zwijnaarde, Belgium}
\maketitle
% Abstract and keywords
\begin{abstract}%<- trailing '%' for backward compatibility of .sty file
We use the martingale-theoretic approach of game-theoretic probability to incorporate imprecision into the study of randomness.
In particular, we define a notion of computable randomness associated with interval, rather than precise, forecasting systems, and study its properties. 
The richer mathematical structure that thus arises lets us better understand and place existing results for the precise limit.
When we focus on constant interval forecasts, we find that every infinite sequence of zeroes and ones has an associated filter of intervals with respect to which it is computably random. 
It may happen that none of these intervals is precise, which justifies the title of this paper. 
We illustrate this by showing that computable randomness associated with non-stationary precise forecasting systems can be captured by a stationary interval forecast, which must then be less precise: a gain in model simplicity is thus paid for by a loss in precision.
\end{abstract}

\begin{keywords}
computable randomness; imprecise probabilities; game-theoretic probability; interval forecast; supermartingale; computability.
\end{keywords}

\section{Introduction}\label{sec:introduction}
This paper documents the first steps in our attempt to incorporate indecision and imprecision into the study of randomness.
Consider a infinite sequence $\pth=(\zvalto{n},\dots)$ of zeroes and ones; when do we call it \emph{random}?
There are many notions of randomness, and many of them have a number of equivalent definitions \citep{ambosspies2000,bienvenu2009:randomness}.
We focus here on \emph{computable randomness}, mainly because its focus on computability---rather than, say, the weaker lower semicomputability---has allowed us in this first attempt to keep the mathematical nitpicking at arm's length.
Randomness of a sequence $\pth$ is typically associated with a probability measure on the sample space of all infinite sequences, or---what is equivalent---with a \emph{forecasting system} $\frcstsystem$ that associates with each finite sequence of outcomes $(\xvalto{n})$ the (conditional) expectation $\frcstsystem(\xvalto{n})$ for the next (as yet unknown) outcome $\randout[n+1]$.
The sequence $\pth$ is then called \emph{computably} random when it passes a (countable) number of \emph{computable} tests of randomness, where the collection of randomness tests depends of the forecasting system $\frcstsystem$.
An alternative but equivalent definition, going back to \citet{ville1939}, sees each forecast $\frcstsystem(\xvalto{n})$ as a fair price for---and therefore a commitment to bet on---the as yet unknown next outcome $\randout[n+1]$.
The sequence $\pth$ is then computably random when there is no computable strategy for getting infinitely rich by exploiting the bets made available by the forecasting system $\frcstsystem$ along the sequence, without borrowing.
Technically speaking, all computable non-negative supermartingales should remain bounded on $\pth$, and the forecasting system $\frcstsystem$ determines what a supermartingale is.

It is this last, martingale-theoretic approach which seems to lend itself most easily to allowing for imprecision in the forecasts, and therefore in the definition of randomness.
As we explain in Sections~\ref{sec:single:forecast} and~\ref{sec:forecasting:systems}, an `imprecise' forecasting system $\frcstsystem$ associates with each finite sequence of outcomes $(\xvalto{n})$ a (conditional) expectation \emph{interval} $\frcstsystem(\xvalto{n})$ for the next (as yet unknown) outcome $\randout[n+1]$, whose lower bound represents a supremum acceptable buying price, and whose upper bound a infimum acceptable selling price for $\randout[n+1]$.
This idea rests firmly on the common ground between \citeauthor{walley1991}'s \citeyearpar{walley1991} theory of coherent lower previsions and \citeauthor{shafer2001}'s \citeyearpar{shafer2001} game-theoretic approach to probability that we have established in recent years, through our research on imprecise stochastic processes \citep{cooman2007d,cooman2015:markovergodic}.
This allows us to associate supermartingales with an imprecise forecasting system, and therefore in Section~\ref{sec:randomness} to extend the existing notion of computable randomness to allow for interval, rather than precise, forecasts---we discuss computability in Section~\ref{sec:computability}.
We show in Section~\ref{sec:consistency} that our approach allows us to extend some of \citeauthor{dawid1982:well:calibrated:bayesian}'s \citeyearpar{dawid1982:well:calibrated:bayesian} well-known work on calibration, as well as an interesting `limiting frequencies' or computable stochasticity result.
 
We believe the discussion becomes really interesting in Section~\ref{sec:constantintervalforecasts}, where we look at stationary interval forecasts to extend the classical account of randomness. 
That classical account typically considers a forecasting system with stationary expectation forecast $\nicefrac{1}{2}$---corresponding to flipping a fair coin. 
As we have by now come to expect from our experience with imprecise probability models, a much more interesting mathematical picture appears when allowing for interval forecasts than the rather simple case of precise forecasts would lead us to suspect.
In the precise case, a given sequence may not be (computably) random for any stationary forecast, but in the imprecise case there is always a set filter of intervals that a given sequence is computably random for. 
Furthermore, as we show in Section~\ref{sec:inherently}, this filter may not have a smallest element, and even when it does, this smallest element may be a non-vanishing interval: randomness may be inherently imprecise.

\section{A single interval forecast}
\label{sec:single:forecast}
The dynamics of making a single forecast can be made very clear by considering a simple game, with three players, namely Forecaster, Sceptic and Reality.

\vspace{.5\baselineskip}\noindent{\bf Game: single forecast of an outcome $\randout$}\newline
In a first step, Forecaster specifies an interval bound $I=\sqgroup{\lp,\up}$ for the expectation of an as yet unknown outcome $\randout$ in $\outcomes$---or equivalently, for the probability that $\randout=1$. 
We interpret this \emph{interval forecast} $I$ as a commitment, on the part of Forecaster, to adopt $\lp$ as a \emph{supremum buying price} and $\up$ as a \emph{infimum selling price} for the gamble (with reward function) $\randout$.
This is taken to mean that the second player, \emph{Sceptic}, can now in a second step take Forecaster up on any (combination) of the following commitments:
\begin{enumerate}[label=\upshape(\roman*),leftmargin=*,noitemsep,topsep=0pt]
\item for any $p\in\frcsts$ such that $p\leq\lp$, and any $\alpha\geq0$ Forecaster must accept the gamble $\alpha[\randout-p]$, leading to an uncertain reward $-\alpha[\randout-p]$ for Sceptic;\footnote{Because we allow $p\leq\lp$ rather than $p<\lp$, we actually see $\lp$ as a \emph{maximum} buying price, rather than a supremum one. We do this because it does not affect the conclusions, but simplifies the mathematics. Similarly for $q\geq\up$.}
\item for any $q\in\frcsts$ such that $q\geq\up$, and any $\beta\geq0$ Forecaster accepts the gamble $\beta[q-\randout]$, leading to an uncertain reward $-\beta[q-\randout]$ for Sceptic.
\end{enumerate}
Finally, in a third step, the third player, \emph{Reality}, determines the value $x$ of $\randout$ in $\outcomes$.\hfill$\square$\par\vspace{.5\baselineskip}

Elements $x$ of $\outcomes$ are called \emph{outcomes}, and elements $p$ of the real unit interval $\frcsts$ are called (precise) \emph{forecasts}.
We denote by $\imprecisefrcsts$ the set of non-empty closed subintervals of the real unit interval $\frcsts$.
Any element $I$ of $\imprecisefrcsts$ is called an \emph{interval forecast}. 
It has a smallest element $\min I$ and a greatest element $\max I$, so $I=\sqgroup{\min I,\max I}$.
We will use the generic notation $I$ for such an interval, and $\lp\coloneqq\min I$ and $\up\coloneqq\max I$ for its lower and upper bounds, respectively.

After Forecaster announces a forecast interval $I$, what Sceptic can do is essentially to try and increase his capital by taking a gamble on the outcome $\randout$. 
Any such gamble can be considered as a map $f\colon\outcomes\to\reals$, and can therefore be represented as a vector $(f(1),f(0))$ in the two-dimensional vector space $\reals^2$; see also Figure~\ref{fig:capitalincrease}.
\begin{figure}[t]
\centering
% -*- root:decooman17.tex -*-
\begin{tikzpicture}[scale=1.75]
% various coordinates
\coordinate (xaxis) at (1.5,0);
\coordinate (yaxis) at (0,1.5);
\coordinate (origin) at (0,0);
\coordinate (bottomleft) at (-1.5,-1.5);
\coordinate (aboveleft) at (-1.5,0.5);
\coordinate (belowright) at (0.5,-1.5);
\coordinate (gamble) at (-1,-0.5);
\coordinate (label) at (0,-1.7);
% the good region
\draw[blue,thick,name path=marginal line] (aboveleft) -- node[midway,above,rotate=-18.43] {\footnotesize $\ex_{\lp}(f)=0$} (origin);
\draw[help lines,dashed] (origin) -- (1.5,-0.5);
\draw[help lines,dashed] (-0.5,1.5) -- (origin);
\draw[blue,thick] (origin) -- node[midway,above,rotate=-71.57] {\footnotesize $\ex_{\up}(f)=0$} (belowright);
\fill[nearly transparent,blue] (origin) -- (belowright) -- (bottomleft) -- (aboveleft) -- cycle;
% axes and grid
\draw[step=.5cm,gray,very thin] (-1.4,-1.4) grid (1.4,1.4); 
\draw[->] (-1.5,0) -- (xaxis) node[below] {\footnotesize$f(1)$};
\draw[->] (0,-1.5) -- (yaxis) node[above] {\footnotesize$f(0)$};
% first diagonal and associated regions
\draw[red,very thin] (bottomleft) -- (origin) -- coordinate[midway] (diagonal) (1.5,1.5) ; 
\node[red,rotate=45,above=5pt] at (diagonal) {\footnotesize $f(1)\leq f(0)$};
\node[red,rotate=45,below=5pt] at (diagonal) {\footnotesize $f(1)\geq f(0)$};
% gamble to marginal
% \node[fill=black,inner sep=1pt,circle]  at (gamble)  {};
% \path [name path=bissectrix] (gamble) -- ++(2,2); % a bit longer, so that there is an intersection
% \draw [name intersections={of=marginal line and bissectrix, by=marginalgamble}] [dotted,thick,black,->] (gamble) -- (marginalgamble);
% \node[fill=black,inner sep=1pt,circle]  at (marginalgamble)  {};
\node[below=-10pt] at (label) {(a)};
\end{tikzpicture}
\quad
% -*- root:decooman17.tex -*-
\begin{tikzpicture}[scale=1.75]
% various coordinates
\coordinate (xaxis) at (1.5,0);
\coordinate (yaxis) at (0,1.5);
\coordinate (origin) at (0,0);
\coordinate (bottomleft) at (-1.5,-1.5);
\coordinate (bottomright) at (1.5,-1.5);
\coordinate (aboveleft) at (-1.5,0.5);
\coordinate (belowright) at (1.5,-0.5);
\coordinate (gamble) at (-1,-0.5);
\coordinate (label) at (0,-1.7);
% axes and grid
\draw[step=.5cm,gray,very thin] (-1.4,-1.4) grid (1.4,1.4); 
\draw[->] (-1.5,0) -- (xaxis) node[below] {\footnotesize$f(1)$};
\draw[->] (0,-1.5) -- (yaxis) node[above] {\footnotesize$f(0)$};
% first diagonal and associated regions
\draw[red,very thin] (bottomleft) -- (origin) -- coordinate[midway] (diagonal) (1.5,1.5) ; 
\node[red,rotate=45,above=5pt] at (diagonal) {\footnotesize $f(1)\leq f(0)$};
\node[red,rotate=45,below=5pt] at (diagonal) {\footnotesize $f(1)\geq f(0)$};
% the good region
\fill[nearly transparent,blue] (aboveleft) -- (belowright) -- (bottomright) -- (bottomleft) -- cycle;
\draw[fill=white,blue,thick,name path=marginal line] (aboveleft) -- node[midway,above,rotate=-18.43] {\footnotesize $\ex_{r}(f)=0$} (origin) -- (belowright);
% gamble to marginal
% \node[fill=black,inner sep=1pt,circle]  at (gamble)  {};
% \path [name path=bissectrix] (gamble) -- ++(2,2); % a bit longer, so that there is an intersection
% \draw [name intersections={of=marginal line and bissectrix, by=marginalgamble}] [dotted,thick,black,->] (gamble) -- (marginalgamble);
% \node[fill=black,inner sep=1pt,circle]  at (marginalgamble)  {};
\node[below=-10pt] at (label) {(b)};
\end{tikzpicture}
\caption{Gambles $f$ available to Sceptic when (a) Forecaster announces $I\in\imprecisefrcsts$ with $\lp<\up$; and when (b) Forecaster announces $I\in\imprecisefrcsts$ with $\lp=\up\eqqcolon r$.}
\label{fig:capitalincrease}
\end{figure}
$f(\randout)$ is then the increase in Sceptic's capital after the game has been played, as a function of the outcome variable $\randout$.
Of course, not every gamble $f(\randout)$ on the outcome $\randout$ will be available to Sceptic: which gambles he can take is determined by Forecaster's interval forecast $I$.
In their most general form, they are given by $f(\randout)=-\alpha[\randout-p]-\beta[q-\randout]$, where $\alpha$ and $\beta$ are non-negative real numbers, $p\leq\lp$ and $q\geq\up$.
If we consider the so-called \emph{lower expectation} (functional) $\lex_I$ associated with an interval forecast $I$, defined by 
\begin{equation}\label{eq:local:lower}
\lex_I(f)
=\min_{p\in I}\ex_p(f)
=\min_{p\in I}\sqgroup[\big]{pf(1)+(1-p)f(0)}
=
\begin{cases}
\ex_{\lp}(f)&\text{ if $f(1)\geq f(0)$}\\
\ex_{\up}(f)&\text{ if $f(1)\leq f(0)$}
\end{cases}
\end{equation}
for any gamble $f\colon\outcomes\to\reals$, and similarly, the \emph{upper expectation} (functional) $\uex_I$, defined by
\begin{equation}\label{eq:local:upper}
\uex_I(f)
=\max_{p\in I}\ex_p(f)
=\begin{cases}
\ex_{\up}(f)&\text{ if $f(1)\geq f(0)$}\\
\ex_{\lp}(f)&\text{ if $f(1)\leq f(0)$}
\end{cases}
=-\lex_I(-f),
\end{equation}
then it is not difficult to see that \emph{the cone of gambles $f(\randout)$ that are available to Sceptic after Forecaster announces an interval forecast $I$ is completely determined by the condition $\uex_I(f)\leq0$}, as depicted by the blue regions in Figure~\ref{fig:capitalincrease}.
The functionals $\lex_I$ and $\uex_I$ are easily shown to have the following properties, typical for the more general lower and upper expectation operators defined on more general gamble spaces \citep{walley1991,troffaes2013:lp}:

\begin{proposition}\label{prop:properties:of:expectations}
Consider any forecast interval $I\in\imprecisefrcsts$.
Then for all gambles $f,g$ on $\outcomes$, $\mu\in\reals$ and non-negative $\lambda\in\reals$:
\begin{enumerate}[label=\upshape C{\arabic*}.,ref=\upshape C{\arabic*},leftmargin=*,noitemsep,topsep=0pt]
\item\label{axiom:coherence:bounds} $\min f\leq\lex_I(f)\leq\uex_I(f)\leq\max f$;\upshape\hfill[bounds]
\item\label{axiom:coherence:homogeneity} $\lex_I(\lambda f)=\lambda\lex_I(f)$ and $\uex_I(\lambda f)=\lambda\uex_I(f)$;\upshape\hfill[non-negative homogeneity]
\item\label{axiom:coherence:subadditivity} $\lex_I(f+g)\geq\lex_I(f)+\lex_I(g)$ and $\uex_I(f+g)\leq\uex_I(f)+\uex_I(g)$;\upshape\hfill[super/subadditivity]
\item\label{axiom:coherence:constantadditivity} $\lex_I(f+\mu)=\lex_I(f)+\mu$ and $\uex_I(f+\mu)=\uex_I(f)+\mu$.\upshape\hfill[constant additivity]
\end{enumerate}
\end{proposition}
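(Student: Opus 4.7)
The plan is to derive every property of $\lex_I$ and $\uex_I$ from the corresponding (linear, hence easier) property of the precise expectation functionals $\ex_p$, using the representation
\[
\lex_I(f)=\min_{p\in I}\ex_p(f)
\qquad\text{and}\qquad
\uex_I(f)=\max_{p\in I}\ex_p(f),
\]
together with the conjugacy relation $\uex_I(f)=-\lex_I(-f)$ stated in~\eqref{eq:local:upper}. This way, the only work is to check how $\min$ and $\max$ over $p\in I$ interact with affine combinations and order, and then to read off the upper-expectation half of each statement by conjugacy.

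First I would record the corresponding properties of $\ex_p$ for a fixed $p\in\frcsts$. Since $\ex_p(f)=pf(1)+(1-p)f(0)$ is a convex combination of $f(0)$ and $f(1)$, it automatically satisfies the equalities $\min f\leq\ex_p(f)\leq\max f$, $\ex_p(\lambda f)=\lambda\ex_p(f)$ for $\lambda\geq0$, $\ex_p(f+g)=\ex_p(f)+\ex_p(g)$, and $\ex_p(f+\mu)=\ex_p(f)+\mu$. These are immediate by inspection.

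Next I would push these through the $\min_{p\in I}$ operation to obtain the lower-expectation half of \ref{axiom:coherence:bounds}--\ref{axiom:coherence:constantadditivity}. For \ref{axiom:coherence:bounds}, the bounds $\min f\leq\ex_p(f)\leq\max f$ are preserved under $\min_{p\in I}$, and the middle inequality $\lex_I(f)\leq\uex_I(f)$ holds because $\min_{p\in I}\ex_p(f)\leq\max_{p\in I}\ex_p(f)$. For \ref{axiom:coherence:homogeneity}, non-negative homogeneity of $\ex_p$ together with the fact that $\lambda\geq0$ commutes with $\min$ gives $\lex_I(\lambda f)=\lambda\lex_I(f)$. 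For \ref{axiom:coherence:constantadditivity}, the constant $\mu$ pulls out of the minimum, yielding $\lex_I(f+\mu)=\lex_I(f)+\mu$. For \ref{axiom:coherence:subadditivity}, I would use the standard superadditivity of $\min$: for any $p\in I$, $\ex_p(f+g)=\ex_p(f)+\ex_p(g)\geq\lex_I(f)+\lex_I(g)$, and taking the minimum over $p\in I$ on the left gives $\lex_I(f+g)\geq\lex_I(f)+\lex_I(g)$.

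Finally, I would obtain the upper-expectation half of each item by applying the lower-expectation version to $-f$ (and $-g$) and invoking conjugacy $\uex_I(\cdot)=-\lex_I(-\cdot)$; the direction of the inequalities in \ref{axiom:coherence:bounds} and \ref{axiom:coherence:subadditivity} flips appropriately, and \ref{axiom:coherence:homogeneity} and \ref{axiom:coherence:constantadditivity} transfer verbatim. There is no real obstacle here: the argument is entirely routine, and the only point that deserves any care is checking that the non-negativity of $\lambda$ in \ref{axiom:coherence:homogeneity} is what allows $\lambda$ to commute with $\min$ (and hence, via conjugacy, with $\max$) without flipping signs.
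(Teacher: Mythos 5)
Your argument is correct and is exactly the routine verification the paper has in mind: it states these properties without proof, noting they are ``easily shown'' and standard for lower/upper expectations (citing Walley and Troffaes--De Cooman). Deriving each item from the linearity of $\ex_p$ and the interaction of $\min_{p\in I}$ with affine maps, then transferring to $\uex_I$ via conjugacy, is precisely the standard route, and all your steps check out.
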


\section{Interval forecasting systems and imprecise probability trees}
\label{sec:forecasting:systems}
We now consider a sequence of repeated versions of the forecast game in the previous section, where at each stage $k\in\naturals$, Forecaster presents an interval forecast $I_k=\pinterval[k]$ for the unknown outcome variable $\randout[k]$.
This effectively allows Sceptic to choose any gamble $f_k(\randout[k])$ such that $\uex_{I_k}(f)\leq0$.
Reality then chooses a value $x_k$ for $X_k$, resulting in a gain, or increase in capital, $f_k(x_k)$ for Sceptic.

We call $(x_1,x_2,\dots,x_n,\dots)$ an \emph{outcome sequence}, and collect all possible outcome sequences in the set $\pths\coloneqq\outcomes^\naturals$.
We collect the finite outcome sequences $(x_1,\dots,x_n)$ in the set $\sits\coloneqq\outcomes^*=\bigcup_{n\in\naturalswithzero}\outcomes^n$.
Finite sequences $s$ in $\sits$ and infinite sequences $\pth$ in $\pths$ are the nodes---called \emph{situations}---and paths in an event tree with unbounded horizon, part of which is depicted below.
\begin{center}
% -*- root:impreciserandomness.tex -*-
\begin{tikzpicture}
\tikzstyle{level 1}=[sibling distance=18em]
\tikzstyle{level 2}=[sibling distance=9em]
\tikzstyle{level 3}=[sibling distance=4.5em]
\tikzstyle{level 4}=[level distance=2em]
\node[root] (root) {} [grow=down,level distance=5ex]
child {node[nonterminal] (a) {$0$}
  child {node[nonterminal] (aa) {$00$}
    child {node[nonterminal] (aaa) {$000$}
      child[black,dotted]} 
    child {node[nonterminal] (aab) {$001$}
      child[black,dotted]}
  }
  child {node[nonterminal] (ab) {$01$}
    child {node[nonterminal] (aba) {$010$}
      child[black,dotted]}
    child {node[nonterminal] (abb) {$011$}
      child[black,dotted]}
  }
}
child {node[nonterminal] (b) {$1$}
  child {node[nonterminal] (ba) {$10$}
    child {node[nonterminal] (baa) {$100$}
      child[black,dotted]}
    child {node[nonterminal] (bab) {$101$}
      child[black,dotted]}
  }
  child {node[nonterminal] (bb) {$11$}
    child {node[nonterminal] (bba) {$110$}
      child[black,dotted]}
    child {node[nonterminal] (bbb) {$111$}
      child[black,dotted]}
  }
};
% \draw[cut] (b) -- +(1,0);
% \draw[cut] (b) -- (a) -- +(-2,0) node[left,local] {$\statesto{1}$};
% \draw[cut] (bb) -- +(1,0);
% \draw[cut] (bb) -- (ba) -- (ab) -- (aa) -- +(-1,0) node[left,local] {$\statesto{2}$};
% \draw[cut] (bbb) -- +(1,0);
% \draw[cut] (bbb) -- (bba) -- (bab) -- (baa) -- (abb) -- (aba) -- (aab) -- (aaa) -- +(-1,0) node[left,local] {$\statesto{3}$};
\end{tikzpicture}

\end{center}
In this repeated game, Forecaster will only provide interval forecasts $I_k$ after observing the actual sequence $(x_1,\dots,x_{k-1})$ that Reality has chosen. %, and the corresponding sequence of gambles $(f_1,\dots,f_{k-1})$ that Sceptic has chosen.
% I removed this in response to the confusion of the reviewer; given that we do not consider the prequential approach anyway, I think it's fine to simplify it in this way.
% *** does the prequential approach really allow a dependency on the functions $f_i$? Yes: its can take into account a lot more, see the paper by Vovk and Shen. ***
This is the essence of so-called \emph{prequential forecasting} \citep{dawid1982:well:calibrated:bayesian,dawid1984,dawid1999}.
But for technical reasons, it will be useful to consider the more involved setting where a forecast $I_s$ is specified in each of the possible situations $s\in\sits$; see the figure below.
\begin{center}
% -*- root:impreciserandomness.tex -*-
\begin{tikzpicture}
\tikzstyle{level 1}=[sibling distance=18em]
\tikzstyle{level 2}=[sibling distance=9em]
\tikzstyle{level 3}=[sibling distance=4.5em]
\tikzstyle{level 4}=[level distance=2em]
\node[root] (root) {} [grow=down,level distance=5ex]
child {node[nonterminal] (a) {$0$}
  child {node[nonterminal] (aa) {$00$}
    child {node[nonterminal] (aaa) {$000$}
      child[black,dotted]}
    child {node[nonterminal] (aab) {$001$}
      child[black,dotted]}
  }
  child {node[nonterminal] (ab) {$01$}
    child {node[nonterminal] (aba) {$010$}
      child[black,dotted]}
    child {node[nonterminal] (abb) {$011$}
      child[black,dotted]}
  }
}
child {node[nonterminal] (b) {$1$}
  child {node[nonterminal] (ba) {$10$}
    child {node[nonterminal] (baa) {$100$}
      child[black,dotted]}
    child {node[nonterminal] (bab) {$101$}
      child[black,dotted]}
  }
  child {node[nonterminal] (bb) {$11$}
    child {node[nonterminal] (bba) {$110$}
      child[black,dotted]}
    child {node[nonterminal] (bbb) {$111$}
      child[black,dotted]}
  }
};
\draw[local,thick] (root) +(180:1em) arc (180:360:1em);
\draw[local,thick] (b) +(190:1.25em) arc (190:350:1.25em);
\draw[local,thick] (a) +(190:1.25em) arc (190:350:1.25em);
\draw[local,thick] (bb) +(210:1.35em) arc (210:330:1.35em);
\draw[local,thick] (ba) +(210:1.35em) arc (210:330:1.35em);
\draw[local,thick] (ab) +(210:1.35em) arc (210:330:1.35em);
\draw[local,thick] (aa) +(210:1.35em) arc (210:330:1.35em);
\path (root) +(275:1.7em) node[local] {$I_\init$};
\path (a) +(270:2em) node[local] {$I_{0}$};
\path (b) +(270:2em) node[local] {$I_{1}$};
\path (aa) +(270:2.1em) node[local] {$I_{00}$};
\path (bb) +(270:2.1em) node[local] {$I_{11}$};
\path (ba) +(270:2.1em) node[local] {$I_{10}$};
\path (ab) +(270:2.1em) node[local] {$I_{01}$};
\end{tikzpicture}

\end{center}
Indeed, we can use this idea to generalise the notion of a forecasting system \citep{vovk2010:randomness}.

\begin{definition}[Forecasting system]
A \emph{forecasting system} is a map $\frcstsystem\colon\sits\to\imprecisefrcsts$, that associates with any situation $s$ in the event tree a forecast\/ $\frcstsystem(s)\in\imprecisefrcsts$.
With any forecasting system $\frcstsystem$ we can associate two real-valued maps $\lfrcstsystem$ and\/ $\ufrcstsystem$ on $\sits$, defined by $\lfrcstsystem(s)\coloneqq\min\frcstsystem(s)$ and\/ $\ufrcstsystem(s)\coloneqq\max\frcstsystem(s)$ for all $s\in\sits$.
A forecasting system $\frcstsystem$ is called \emph{precise} if\/ $\lfrcstsystem=\ufrcstsystem$.
$\frcstsystems$ denotes the set $\smash{\imprecisefrcsts^{\sits}}$ of all forecasting systems.
\end{definition}
\noindent
Specifying such a forecasting system requires imagining in advance all moves that Reality %---and, why not, Sceptic---
could make, and devising in advance what forecasts to give in each imaginable situation $s$.
In the precise case, that is typically what one does when specifying a probability measure on the so-called \emph{sample space} $\pths$---the set $\pths$ of all paths.

Since in each situation $s$ the interval forecast $I_s=\frcstsystem(s)$ corresponds to a local lower expectation $\lex_{I_s}$, we can use the argumentation in our earlier papers \citep{cooman2007d,cooman2015:markovergodic} on stochastic processes to let the forecasting system $\frcstsystem$ turn the event tree into a so-called \emph{imprecise probability tree}, with an associated global lower expectation, and a corresponding notion of `(strictly) almost surely'.
In what follows, we briefly recall how to do this; for more context, we also refer to the seminal work by \citet{shafer2001}.

For any path $\pth\in\pths$, the initial sequence that consists of its first $n$ elements is a situation in $\outcomes^n$ that is denoted by $\pth^{n}$. 
Its $n$-th element belongs to $\outcomes$ and is denoted by $\pth_n$.
As a convention, we let its $0$-th element be the \emph{initial} situation $\pth^0=\pth_0=\init$. 
We write that $s\precedes t$, and say that the situation $s$ \emph{precedes} the situation $t$, when every path that goes through $t$ also goes through $s$---so $s$ is a precursor of $t$.

A \emph{process} $\process$ is a map defined on $\sits$. 
A \emph{real process} is a real-valued process: it associates a real number $\process(s)\in\reals$ with every situation $s\in\sits$.
With any real process $\process$, we can always associate a process $\Delta\process$, called the \emph{process difference}. 
For every situation $(\xvalto{n})$ with $n\in\naturalswithzero$, $\Delta\process(\xvalto{n})$ is a gamble on $\outcomes$ defined by $\Delta\process(\xvalto{n})(\xval{n+1})\coloneqq\process(\xvalto{n+1})-\process(\xvalto{n})$ for all $\xval{n+1}\in\outcomes$.
In the imprecise probability tree associated with a \emph{given} forecasting system $\frcstsystem$, a \emph{submartingale} $\submartin$ for $\frcstsystem$ is a real process such that $\lex_{\frcstsystem(\xvalto{n})}(\Delta\submartin(\xvalto{n}))\geq0$ for all $n\in\naturalswithzero$ and $(\xvalto{n})\in\outcomes^n$.
A real process $\supermartin$ is a \emph{supermartingale} for $\frcstsystem$ if $-\supermartin$ is a submartingale, meaning that $\uex_{\frcstsystem(\xvalto{n})}(\Delta\submartin(\xvalto{n}))\leq0$ for all $n\in\naturalswithzero$ and $(\xvalto{n})\in\outcomes^n$: all supermartingale differences have non-positive upper expectation, so supermartingales are real processes that Forecaster expects to decrease.
We denote the set of all submartingales for a given forecasting system $\frcstsystem$ by $\submartins[\frcstsystem]$---whether a real process is a submartingale depends of course on the forecasts in the situations.
Similarly, the set $\supermartins[\frcstsystem]\coloneqq-\submartins[\frcstsystem]$ is the set of all supermartingales for $\frcstsystem$.

It is clear from the discussion in Section~\ref{sec:single:forecast} that the supermartingales are effectively all the possible capital processes $\capitalsymbol$ for a Sceptic who starts with an initial capital $\capitalsymbol(\init)$, and in each possible subsequent situation $s$ selects a gamble $f_s=\Delta\capitalsymbol(s)$ that is available there because Forecaster specifies the interval forecast $I_s=\frcstsystem(s)$ and because $\uex_{I_s}(f_s)=\uex_{\frcstsystem(s)}(\Delta\capitalsymbol(s))\leq0$. 
If Reality chooses outcomes $s=(\xvalto{n})$, then Sceptic ends up with capital $\capitalsymbol(\xvalto{n})=\capitalsymbol(\init)+\sum_{k=0}^{n-1}\Delta\capitalsymbol(\xvalto{k})(\xval{k+1})$.
A \emph{non-negative} supermartingale $\supermartin$ is non-negative in all situations, which corresponds to Sceptic never borrowing any money. 
We call \emph{test supermartingale} any non-negative supermartingale $\supermartin$ that starts with unit capital $\supermartin(\init)=1$.
We collect all test supermartingales for $\frcstsystem$ in the set $\testsupermartins[\frcstsystem]$. 

In the context of probability trees, we call \emph{variable} any function defined on the sample space~$\pths$.
When this variable is real-valued and bounded, we call it a \emph{gamble} on $\pths$.
An \emph{event} $A$ in this context is a subset of $\pths$, and its indicator $\ind{A}$ is a gamble on $\pths$ assuming the value $1$ on $A$ and $0$ elsewhere.
The following expressions define lower and upper expectations on such gambles $g$ on $\pths$:
\begin{align}
\ljoint[\frcstsystem](g)
\coloneqq&\sup\cset[\Big]{\submartin(\init)}
{\submartin\in\submartins[\frcstsystem]
\text{ and }
\limsup_{n\to+\infty}\submartin(\pth^n)\leq g(\pth)
\text{ for all $\pth\in\pths$}}
\label{eq:tree:lower:expectation}\\
\ujoint[\frcstsystem](g)
\coloneqq&\inf\cset[\Big]{\supermartin(\init)}
{\supermartin\in\supermartins[\frcstsystem]
\text{ and }
\liminf_{n\to+\infty}\supermartin(\pth^n)\geq g(\pth)
\text{ for all $\pth\in\pths$}}
=-\ljoint[\frcstsystem](g).
\label{eq:tree:upper:expectation}
\end{align}
They satisfy coherence properties similar to those in Proposition~\ref{prop:properties:of:expectations}.
We refer to extensive discussions elsewhere \citep{cooman2015:markovergodic,shafer2001} about why these expressions are interesting and useful.
For our present purposes, it may suffice to mention that for precise forecasts, they lead to  models that coincide with the ones found in measure-theoretic probability theory \citep[Chapter~8]{shafer2001}. 
\emph{In particular, when all $I_s=\set{\nicefrac{1}{2}}$, they coincide with the usual uniform (Lebesgue) expectations on measurable gambles.}

We call an event $A\subseteq\pths$ \emph{null} if $\overline{P}^{\frcstsystem}(A)\coloneqq\ujoint[\frcstsystem](\ind{A})=0$, or equivalently $\underline{P}^{\frcstsystem}(A^c)\coloneqq\ljoint[\frcstsystem](\ind{A^c})=1$, and \emph{strictly null} if there is some test supermartingale $\test\in\testsupermartins[\frcstsystem]$ that converges to $+\infty$ on $A$, meaning that $\lim_{n\to+\infty}\test(\pth^n)=+\infty$ for all $\pth\in A$.
Any strictly null event is null, but null events need not be strictly null~\citep{vovk2014:itip,cooman2015:markovergodic}.
Because it is easily checked that $\overline{P}^{\frcstsystem}(\emptyset)=\underline{P}^{\frcstsystem}(\emptyset)=0$
, the complement $A^c$ of a (strictly) null event $A$ is never empty.
As usual, any property that holds, except perhaps on a (strictly) null event, is said to hold (strictly) \emph{almost surely}.

\section{Basic computability notions}\label{sec:computability}
We recall a few notions and results from computability theory that are relevant to the discussion.
For a much more extensive treatment, we refer for instance to the books by \citet{pourel1989} and \citet{li1993}.

A \emph{computable} function $\phi\colon\naturalswithzero\to\naturalswithzero$ is a function that can be computed by a Turing machine.
All notions of computability that we will need, build on this basic notion.
It is clear that it in this definition, we can replace any of the $\naturalswithzero$ with any other countable set.

We start with the definition of a computable real number.
We call a sequence of rational numbers $r_n$ \emph{computable} if there are three computable functions $a,b,\sigma$ from $\naturalswithzero$ to $\naturalswithzero$ such that $b(n)>0$ and $r_n=(-1)^{\sigma(n)}\frac{a(n)}{b(n)}$ for all $n\in\naturalswithzero$, and we say that it \emph{converges effectively} to a real number $x$ if there is some computable function $e\colon\naturalswithzero\to\naturalswithzero$ such that $n\geq e(N)\then\abs{r_n-x}\leq2^{-N}$  for all $n,N\in\naturalswithzero$.
A real number is then called \emph{computable} if there is a computable sequence of rational numbers that converges effectively to it.
Of course, every rational number is a computable real.

We also need a notion of computable real processes, or in other words, computable real-valued maps $\process\colon\sits\to\reals$ defined on the set $\sits$ of all situations.
Because there is an obvious computable bijection between $\naturalswithzero$ and $\sits$, whose inverse is also computable, we can in fact identify real processes and real sequences, and simply import, {\itshape mutatis mutandis}, the definitions for computable real sequences common in the literature \citep[Chapter~0]{li1993}.
Indeed, we call a net of rational numbers $r_{s,n}$ \emph{computable} if there are three computable functions $a,b,s$ from $\sits\times\naturalswithzero$ to $\naturalswithzero$ such that $b(s,n)>0$ and $r_{s,n}=(-1)^{\sigma(s,n)}\frac{a(s,n)}{b(s,n)}$ for all $s\in\sits$ and $n\in\naturalswithzero$.
We call a real process $\process\colon\sits\to\reals$ \emph{computable} if there is a computable net of rational numbers $r_{s,n}$ and a computable function $e\colon\sits\times\naturalswithzero\to\naturalswithzero$ such that $n\geq e(s,N)\then\abs{r_{s,n}-\process(s)}\leq2^{-N}$ for all $s\in\sits$ and $n,N\in\naturalswithzero$.
Obviously, it follows from this definition that in particular $\process(t)$ is a computable real number for any $t\in\sits$: fix $s=t$ and consider the sequence $r_{t,n}$ that converges to $\process(s)$ as $n\to+\infty$.
Also, a constant real process is computable if and only if its constant value is.

The following definitions are now obvious. 
A gamble $f$ on $\outcomes$ is called \emph{computable} if both its values $f(0)$ and $f(1)$ are computable real numbers.
An interval forecast $I=\pinterval\in\imprecisefrcsts$ is called \emph{computable} if both its lower bound $\lp$ and upper bound $\up$ are computable real numbers.
A forecasting system $\frcstsystem$ is called \emph{computable} if the associated real processes $\lfrcstsystem$ and $\ufrcstsystem$ are.

\section{Random sequences in an imprecise probability tree}\label{sec:randomness}
We will now associate a notion of randomness with a forecasting system $\frcstsystem$---or in other words, with an imprecise probability tree.
In what follows, we will often consider computable test supermartingales.
These computable test supermartingales for a forecasting system are countable in number, because the computable processes are \citep{li1993,vovk2010:randomness}.

\begin{definition}[Computable randomness]\label{def:randomness}
Consider any forecasting system $\frcstsystem\colon\sits\to\imprecisefrcsts$.
We call an outcome sequence $\pth$ \emph{computably random for $\frcstsystem$} if all computable test supermartingales $\test$ remain bounded above on $\pth$, meaning that there is some $B\in\reals$ such that $\test(\pth^n)\leq B$ for all $n\in\naturals$, or equivalently, that $\sup_{n\in\naturals}\test(\pth^n)<+\infty$.
We then also say that the forecasting system $\frcstsystem$ \emph{makes $\pth$ computably random}.
We denote by $\random[\pth]{\mathrm{C}}\coloneqq\cset{\frcstsystem\in\frcstsystems}{\text{$\pth$ is computably random for $\frcstsystem$}}$ the set of all forecasting systems for which the outcome sequence $\pth$ is computably random.
\end{definition}
\noindent
Computable randomness of an outcome sequence means that there is no computable strategy that starts with capital $1$ and avoids borrowing, and allows Sceptic to increase his capital without bounds by exploiting the bets on these outcomes that are made available to him by Forecaster's specification of the forecasting system $\frcstsystem$. 
When the forecasting system $\frcstsystem$ is precise and computable, our notion of computable randomness reduces to the classical notion of computable randomness \citep{ambosspies2000,bienvenu2009:randomness}.

The (computable) \emph{vacuous} forecasting system $\frcstsystem_\mathrm{v}$ assigns the vacuous forecast $\frcstsystem_\mathrm{v}(s)\coloneqq\frcsts$ to all situations $s\in\sits$.
The following proposition implies that no $\random[\pth]{\mathrm{C}}$ is empty.

\begin{proposition}\label{prop:vacuous}
All paths are computably random for the vacuous forecasting system: $\frcstsystem_\mathrm{v}\in\random[\pth]{\mathrm{C}}$ for all $\pth\in\pths$.
\end{proposition}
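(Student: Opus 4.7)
The plan is to observe that the vacuous forecast $[0,1]$ makes the supermartingale condition maximally restrictive, forcing every supermartingale to be non-increasing along every path, which will bound any test supermartingale by its initial value $1$.

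First I would unpack the local upper expectation. By Equation~\eqref{eq:local:upper}, for the vacuous forecast $I=\frcsts$ and any gamble $f$ on $\outcomes$,
\begin{equation*}
\uex_{\frcsts}(f)=\max_{p\in\frcsts}\ex_p(f)=\max\set{f(0),f(1)}.
\end{equation*}
Hence, given any supermartingale $\supermartin\in\supermartins[\frcstsystem_\mathrm{v}]$ and any situation $s=(\xvalto{n})\in\sits$, the defining inequality $\uex_{\frcstsystem_\mathrm{v}(s)}(\adddelta\supermartin(s))\leq0$ becomes
\begin{equation*}
\max\set{\adddelta\supermartin(s)(0),\,\adddelta\supermartin(s)(1)}\leq0,
\end{equation*}
so $\supermartin(s,0)\leq\supermartin(s)$ and $\supermartin(s,1)\leq\supermartin(s)$.

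Next I would pass from this one-step inequality to a global monotonicity statement. For any path $\pth\in\pths$ and any $n\in\naturals$, iterating the inequality along the finite prefixes $\pth^0\sqsubseteq\pth^1\sqsubseteq\dots\sqsubseteq\pth^n$ gives $\supermartin(\pth^n)\leq\supermartin(\pth^{n-1})\leq\dots\leq\supermartin(\init)$. In particular, if $\test\in\testsupermartins[\frcstsystem_\mathrm{v}]$ is a test supermartingale, then $\test(\init)=1$ yields $\test(\pth^n)\leq1$ for every $\pth\in\pths$ and every $n\in\naturals$.

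Since this bound holds uniformly for all test supermartingales---and therefore in particular for all \emph{computable} test supermartingales---Definition~\ref{def:randomness} is satisfied for every path $\pth\in\pths$, which is exactly $\frcstsystem_\mathrm{v}\in\random[\pth]{\mathrm{C}}$. There is no real obstacle here: the argument is essentially that the vacuous interval makes Sceptic's available cone consist only of gambles $f$ with $\max f\leq0$, so no non-trivial betting is allowed and no capital growth is possible.
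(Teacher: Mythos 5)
Your proposal is correct and is essentially the paper's own proof: the paper likewise observes that for the vacuous forecasting system a supermartingale is exactly a non-increasing real process, so every test supermartingale is bounded above by its initial value $1$ on every path. You simply spell out the computation $\uex_{\frcsts}(f)=\max\set{f(0),f(1)}$ and the iteration along prefixes in more detail.
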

\noindent
More conservative (or imprecise) forecasting systems have more computably random sequences.
\begin{proposition}\label{prop:nestedfrcstsystems}
Let $\pth$ be computably random for a forecasting system $\frcstsystem$. 
Then $\pth$ is also computably random for any forecasting system $\frcstsystem^*$ such that $\frcstsystem\subseteq\frcstsystem^*$, meaning that $\frcstsystem(s)\subseteq\frcstsystem^*(s)$ for all $s\in\sits$.
\end{proposition}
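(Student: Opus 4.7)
The plan is to show that enlarging the forecast intervals shrinks the class of test supermartingales, from which the result is immediate. The intuition is that when Forecaster commits to $\frcstsystem^*(s) \supseteq \frcstsystem(s)$, Sceptic is offered \emph{fewer} betting opportunities in every situation (a wider interval means a less committal buying/selling price), so any capital process he can generate against $\frcstsystem^*$ he can also generate against $\frcstsystem$.

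First, I would record the monotonicity of the local upper expectation in the forecast interval: for any $I,I^*\in\imprecisefrcsts$ with $I\subseteq I^*$ and any gamble $f$ on $\outcomes$,
\begin{equation*}
\uex_{I^*}(f)=\max_{p\in I^*}\ex_p(f)\geq\max_{p\in I}\ex_p(f)=\uex_I(f),
\end{equation*}
which follows directly from the defining formula~\eqref{eq:local:upper}.

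Next, I would apply this pointwise in the tree. If $\supermartin$ is a supermartingale for $\frcstsystem^*$, then for every $s=(\xvalto{n})\in\sits$,
\begin{equation*}
\uex_{\frcstsystem(s)}(\adddelta\supermartin(s))\leq\uex_{\frcstsystem^*(s)}(\adddelta\supermartin(s))\leq0,
\end{equation*}
so $\supermartin$ is a supermartingale for $\frcstsystem$. Hence $\supermartins[\frcstsystem^*]\subseteq\supermartins[\frcstsystem]$, and by restricting to the non-negative processes with unit initial value, also $\testsupermartins[\frcstsystem^*]\subseteq\testsupermartins[\frcstsystem]$; intersecting both sides with the class of computable processes preserves the inclusion.

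Finally, let $\test$ be any computable test supermartingale for $\frcstsystem^*$. By the previous step, $\test$ is a computable test supermartingale for $\frcstsystem$ as well, and because $\pth$ is computably random for $\frcstsystem$, there exists $B\in\reals$ with $\test(\pth^n)\leq B$ for all $n\in\naturals$. Since $\test$ was arbitrary, $\pth$ is computably random for $\frcstsystem^*$. There is no real obstacle here: the only substantive observation is the monotonicity of $\uex_I$ in $I$, which is a one-line consequence of the definition.
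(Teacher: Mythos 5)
Your proof is correct and follows exactly the route the paper takes: the paper's proof is the one-line observation that $\frcstsystem\subseteq\frcstsystem^*$ implies $\comptestsupermartins[\frcstsystem^*]\subseteq\comptestsupermartins[\frcstsystem]$, after which the claim is immediate from Definition~\ref{def:randomness}. You have simply spelled out the monotonicity of $\uex_I$ in $I$ that underlies this inclusion, which the paper leaves implicit.
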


\section{Consistency results}
\label{sec:consistency}
We first show that any Forecaster who specifies a forecasting system is consistent in the sense that he believes himself to be \emph{well calibrated}: in the imprecise probability tree generated by his own forecasts, (strictly) almost all paths will be computably random, so he is sure that Sceptic will not be able to become infinitely rich at his expense, by exploiting his---Forecaster's---forecasts.
This also generalises the arguments and conclusions in a paper by \citet{dawid1982:well:calibrated:bayesian}.

\begin{theorem}\label{thm:consistency}
Consider any forecasting system $\frcstsystem\colon\sits\to\imprecisefrcsts$.
Then (strictly) almost all outcome sequences are computably random for $\frcstsystem$ in the imprecise probability tree that corresponds to~$\frcstsystem$.
\end{theorem}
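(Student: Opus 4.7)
Let $N\coloneqq\cset[\big]{\pth\in\pths}{\pth\text{ is not computably random for }\frcstsystem}$; the goal is to show $N$ is strictly null, which will also give nullity. Since the set of computable real processes is countable (Section~\ref{sec:computability}), the computable test supermartingales for $\frcstsystem$ can be enumerated as $(\test_k)_{k\in\naturals}$, and by Definition~\ref{def:randomness} we have $N=\bigcup_{k\in\naturals}N_k$, where $N_k\coloneqq\cset[\big]{\pth\in\pths}{\sup_n\test_k(\pth^n)=+\infty}$.

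For each fixed $k$, I use a standard ``saving'' construction to convert $\test_k$ into a test supermartingale that \emph{diverges} to $+\infty$ on $N_k$, rather than merely being unbounded there. For each $j\in\naturals$, let $\sigma_k^j(\pth)\coloneqq\inf\set{n\in\naturals\colon\test_k(\pth^n)\geq 2^j}$ and denote by $\test_k^j$ the process obtained by stopping $\test_k$ at~$\sigma_k^j$; stopping preserves the supermartingale property, and on $\set{\sigma_k^j<+\infty}$ the stopped process is eventually constant at a value $\geq 2^j$. Setting $\widetilde\test_k\coloneqq\sum_{j\in\naturals}2^{-j}\test_k^j$---which is a well-defined test supermartingale, because each $\test_k^j(s)\leq\max_{t\precedes s}\test_k(t)$ and hence the series converges pointwise---we find, for every $\pth\in N_k$, that
$$
\widetilde\test_k(\pth^n)\geq\abs[\big]{\set{j\in\naturals\colon\sigma_k^j(\pth)\leq n}}\to+\infty
$$
as $n\to+\infty$, since $\pth\in N_k$ makes all $\sigma_k^j(\pth)$ finite.

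To combine these witnesses into a single one, form $M\coloneqq\sum_{k\in\naturals}2^{-k}\widetilde\test_k$. Provided the pointwise sum is finite in every situation $s\in\sits$, the subadditivity and non-negative homogeneity of the local upper expectations $\uex_{\frcstsystem(s)}$ in Proposition~\ref{prop:properties:of:expectations}, applied termwise to the gambles $\adddelta\widetilde\test_k(s)$ on the two-point set $\outcomes$, give $\uex_{\frcstsystem(s)}(\adddelta M(s))\leq 0$, so $M\in\testsupermartins[\frcstsystem]$. For any $\pth\in N$, some $\widetilde\test_k(\pth^n)\to+\infty$, whence $M(\pth^n)\geq 2^{-k}\widetilde\test_k(\pth^n)\to+\infty$; this witnesses that $N$ is strictly null, and hence also that it is null.

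The main technical obstacle is ensuring pointwise finiteness of $M(s)$: the values $\test_k(s)$ need not be uniformly bounded in~$k$ at a fixed $s\in\sits$, especially when $\frcstsystem$ assigns a degenerate interval at some ancestor of~$s$ (for instance $\lp[s']=0$ or $\up[s']=1$), where Sceptic's one-step stake on the corresponding ``forbidden'' outcome is unrestricted. One deals with this either by refining the stopping construction so that each $\widetilde\test_k$ is individually bounded in a way the weight $2^{-k}$ can absorb, or by first handling the (trivially non-random) paths that pass through a degenerate forecast---these are absorbed by a single explicit test supermartingale that bets on the corresponding forbidden outcomes---and only then running the mixing argument on the remaining non-degenerate sub-tree.
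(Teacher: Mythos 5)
Your argument is correct and is essentially the paper's own proof: the paper likewise relies on the countability of the computable test supermartingales, mixes them into a single test supermartingale, and then applies the same stopping-and-mixing device ($\sum_j 2^{-j}\test^{(j)}$, after Shafer and Vovk's Lemma~3.1) to turn ``unbounded on'' into ``convergent to $+\infty$ on''---you merely perform the saving step before the mixing step rather than after. The pointwise-finiteness issue you flag is genuine (the paper passes over it by allowing itself ``some'' countable convex combination), and the standard repair is the one implicit in your first option: since each $\widetilde{\test}_k$ is finite on the finitely many situations of depth at most $k$, choose the weights $w_k$ diagonally so that $w_k\widetilde{\test}_k(s)\leq 2^{-k}$ whenever the depth of $s$ is at most $k$, which makes the mixture finite in every situation while preserving its divergence to $+\infty$ on $N$.
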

\noindent
This result is quite powerful, and it guarantees in particular that:

\begin{corollary}\label{cor:consistency}
For any sequence of interval forecasts $(I_1,\dots,I_n,\dots)$ there is a forecasting system given by $\frcstsystem(x_1,\dots,x_n)\coloneqq I_{n+1}$
 for all $(x_1,\dots,x_n)\in\outcomes^n$ and all $n\in\naturalswithzero$, and associated imprecise probability tree such that (strictly) almost all---and therefore definitely at least one---outcome sequences are computably random for $\frcstsystem$ in the associated imprecise probability tree. 
\end{corollary}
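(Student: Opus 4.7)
The plan is to apply Theorem~\ref{thm:consistency} directly to the forecasting system $\frcstsystem$ defined in the corollary's statement. The only additional content beyond that application is the final claim that at least one outcome sequence is computably random, which I would extract from the observation, made just after equations~\eqref{eq:tree:lower:expectation}--\eqref{eq:tree:upper:expectation}, that the complement of any (strictly) null event is never empty.

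First I would verify that the prescription $\frcstsystem(\xvalto{n})\coloneqq I_{n+1}$ for all $(\xvalto{n})\in\outcomes^n$ and all $n\in\naturalswithzero$ defines a bona fide forecasting system, i.e.\ a map $\sits\to\imprecisefrcsts$. This is immediate: each $I_{n+1}$ lies in $\imprecisefrcsts$ by assumption, and every situation $s\in\sits$ has a uniquely determined length $n$, so $\frcstsystem(s)$ is unambiguously defined. Note that this $\frcstsystem$ is \emph{stationary} in the sense that it depends only on the length of~$s$, not on the particular outcomes recorded in~$s$; nevertheless, it is a forecasting system in the sense of the definition in Section~\ref{sec:forecasting:systems}.

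Next I would invoke Theorem~\ref{thm:consistency} applied to this particular $\frcstsystem$, yielding at once that the set of outcome sequences $\pth\in\pths$ that fail to be computably random for $\frcstsystem$ is (strictly) null in the imprecise probability tree associated with $\frcstsystem$. In other words, (strictly) almost all outcome sequences are computably random for $\frcstsystem$, which is the main statement of the corollary.

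Finally, to conclude the ``definitely at least one'' part, I would observe that since $\overline{P}^{\frcstsystem}(\emptyset)=\underline{P}^{\frcstsystem}(\emptyset)=0$, the complement of any (strictly) null event is non-empty; applying this to the (strictly) null set of non-random sequences gives that the set of sequences that are computably random for $\frcstsystem$ contains at least one path. The corollary is thus essentially a direct packaging of Theorem~\ref{thm:consistency}, and no real obstacle arises; the only subtlety worth flagging is that one must know the stationary-in-depth forecasting system defined from the $I_n$'s is a legitimate input to Theorem~\ref{thm:consistency}, which it clearly is, since that theorem makes no restriction on how the forecasts vary across situations.
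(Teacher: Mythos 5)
Your proposal is correct and follows exactly the paper's own route: define the depth-dependent forecasting system $\frcstsystem(x_1,\dots,x_n)\coloneqq I_{n+1}$, apply Theorem~\ref{thm:consistency} to it, and invoke the fact that the complement of a (strictly) null event is non-empty (since $\overline{P}^{\frcstsystem}(\emptyset)=0$) to get the ``at least one'' part. No gaps.
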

\noindent
The following weaker consistency result deals with limits (inferior and superior) of relative frequencies, taken with respect to a so-called \emph{selection process} $\selection\colon\sits\to\outcomes$.
It is a counterpart in our more general context of the notions of \emph{computable stochasticity} or \emph{Church randomness} in the precise case with $I=\set{\nicefrac{1}{2}}$ \citep{ambosspies2000}.

\begin{theorem}[Church randomness]\label{thm:well:calibrated:general}
Let $\frcstsystem\colon\sits\to\imprecisefrcsts$ be any \emph{computable} forecasting system, let $\pth=(\xvalto{n},\dots)\in\pths$ be any outcome sequence that is computably random for $\frcstsystem$, and let $f$ be any \emph{computable} gamble on $\outcomes$.
If\/ $\selection\colon\sits\to\outcomes$ is any computable selection process such that $\sum_{k=0}^n\selection(\xvalto{k})\to+\infty$, then also
\begin{equation*}
\liminf_{n\to+\infty}
\dfrac{\sum_{k=0}^{n-1}\selection(\xvalto{k})\sqgroup[\big]{f(x_{k+1})-\lex_{\frcstsystem(\xvalto{k})}(f)}}
{\sum_{k=0}^{n-1}\selection(\xvalto{k})} 
\geq0.
\end{equation*}
\end{theorem}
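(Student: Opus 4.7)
The plan is to argue by contradiction via the construction of a computable test supermartingale that is unbounded on $\pth$. Suppose the conclusion fails, so there exists a rational $\epsilon > 0$ such that $M_n < -\epsilon A_n$ for infinitely many $n$, where $A_n \coloneqq \sum_{k=0}^{n-1}\selection(\pth^k)\to+\infty$ by assumption, and $M_n \coloneqq \sum_{k=0}^{n-1}\selection(\pth^k)\bigl[f(\pth_{k+1}) - \lex_{\frcstsystem(\pth^k)}(f)\bigr]$. I let $C \coloneqq \max f - \min f$, so each summand of $M_n$ lies in $[-C,C]$.

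For any rational $\alpha$ with $0 < \alpha \leq 1/(2C)$, I would define the real process $T^\alpha$ on $\sits$ recursively by $T^\alpha(\init) \coloneqq 1$ and, for $n\geq 1$,
\[
T^\alpha(\xvalto{n}) \coloneqq T^\alpha(\xvalto{n-1})\bigl[1 + \alpha\,\selection(\xvalto{n-1})(\lex_{\frcstsystem(\xvalto{n-1})}(f) - f(\xval{n}))\bigr],
\]
reading the empty sequence as $\init$. Since $\alpha C \leq 1/2$, every factor lies in $[1/2,3/2]$, so $T^\alpha$ is non-negative; and the process difference $\Delta T^\alpha(s)$, viewed as a gamble on the next outcome, equals the non-negative scalar $\alpha\, T^\alpha(s)\selection(s)$ multiplied by the gamble $\lex_{\frcstsystem(s)}(f) - f$. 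Using \ref{axiom:coherence:homogeneity}, \ref{axiom:coherence:constantadditivity} together with $\uex_I(-f) = -\lex_I(f)$ from \eqref{eq:local:upper}, this gamble has upper expectation $\alpha\, T^\alpha(s)\selection(s)\bigl[\lex_{\frcstsystem(s)}(f) - \lex_{\frcstsystem(s)}(f)\bigr] = 0$, so $T^\alpha\in\testsupermartins[\frcstsystem]$.

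The next point is computability of $T^\alpha$: \eqref{eq:local:lower} gives $\lex_{\frcstsystem(s)}(f) = \min\{\ex_{\lfrcstsystem(s)}(f),\ex_{\ufrcstsystem(s)}(f)\}$, and since the minimum of two uniformly computable real-valued processes is again uniformly computable, each factor above is computable uniformly in $s$; the finite product is then computable because its factors are uniformly bounded in $[1/2,3/2]$. I would then fix a rational $\alpha \in (0,\min\{\epsilon/(2C^2), 1/(2C)\}]$ and apply the Taylor inequality $\log(1+x)\geq x - x^2$ for $|x|\leq 1/2$---legitimate because $|\alpha\selection(s)(\lex_{\frcstsystem(s)}(f) - f)|\leq \alpha C \leq 1/2$---and sum the resulting bounds to get
\[
\log T^\alpha(\pth^n) \geq -\alpha M_n - \alpha^2\sum_{k=0}^{n-1}\selection(\pth^k)\bigl(\lex_{\frcstsystem(\pth^k)}(f) - f(\pth_{k+1})\bigr)^2 \geq -\alpha M_n - \alpha^2 C^2 A_n.
\]
Whenever $M_n < -\epsilon A_n$, the right-hand side is at least $\alpha A_n(\epsilon - \alpha C^2) \geq (\alpha\epsilon/2) A_n$; combined with $A_n\to+\infty$ along this subsequence, this forces $\sup_n T^\alpha(\pth^n) = +\infty$, contradicting the assumed computable randomness of $\pth$ for $\frcstsystem$.

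The main obstacle I expect is the uniform-in-$s$ computability of $T^\alpha$: one has to check carefully that $s\mapsto\lex_{\frcstsystem(s)}(f)$ is uniformly computable and that finite products of uniformly computable reals preserve the rate of approximation (which works here precisely because the factors are bounded away from $0$ and $+\infty$). Once this computability bookkeeping is in place, the rest is essentially the standard exponential large-deviation martingale argument, translated into the imprecise setting by replacing precise conditional expectations with the local upper and lower expectations governed by the coherence properties of Proposition~\ref{prop:properties:of:expectations}.
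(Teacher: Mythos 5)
Your proposal is correct, and its engine is the same as the paper's: the multiplier $1+\alpha\selection(s)\bigl[\lex_{\frcstsystem(s)}(f)-f\bigr]$ is exactly the supermartingale multiplier $1-\xi\selection(s)\adddelta\submartin^\frcstsystem_f(s)$ of Lemma~\ref{lem:martinwlln}, and your logarithmic estimate via $\ln(1+x)\geq x-x^2$ is the same large-deviation bound proved there. Where you genuinely diverge is in the superstructure: the paper first proves the almost-sure statement (Theorem~\ref{thm:well:calibrated}), which forces it to build a \emph{countable convex combination} $\test^\frcstsystem_f=\sum_{r}2^{-r}\process^{(r)}$ over all thresholds $\epsilon=2^{-r}$ so that one test supermartingale handles every bad path at once; the proof of Theorem~\ref{thm:well:calibrated:general} then has to certify computability of that infinite mixture via Proposition~\ref{prop:computable:closure} and extract a computable multiplier via Proposition~\ref{prop:computablemultiplier:from:delta}. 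Because you argue for a single fixed path, you can fix one rational $\epsilon$ witnessing the failure and use a single exponential supermartingale, which eliminates the mixture and most of the computability bookkeeping; the "main obstacle" you flag (uniform computability of $s\mapsto\lex_{\frcstsystem(s)}(f)$ and of the running product) is exactly what the paper's Propositions~\ref{prop:computable:expectation:processes} and~\ref{prop:computable:from:multiplier} supply, so it is not a gap. The only cost of your shortcut is that it does not by itself yield the almost-sure calibration result, which the paper wants independently. Two trivial edge cases worth a half-sentence each in a polished write-up: the degenerate case $\max f=\min f$ (where $C=0$ and your $1/(2C)$ is undefined, but the conclusion is vacuously true), and the fact that you only need the \emph{existence} of a suitable rational $\alpha$ below the computable reals $\epsilon/(2C^2)$ and $1/(2C)$, not an effective procedure for finding it.
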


\section{Constant interval forecasts}
\label{sec:constantintervalforecasts}
We now introduce a significant simplification.
For any interval $I\in\imprecisefrcsts$, we let $\constantfrcst{I}$ be the corresponding \emph{stationary forecasting system} that assigns the same interval forecast $I$ to all nodes: $\constantfrcst{I}(s)\coloneqq I$ for all $s\in\sits$.
In this way, with any outcome sequence $\pth$, we can associate the collection of all interval forecasts for which the corresponding stationary forecasting system makes $\pth$ computably random:
\begin{equation*}
\constantrandom{\mathrm{C}}
\coloneqq\cset{I\in\imprecisefrcsts}{\constantfrcst{I}\in\random{\mathrm{C}}}
=\cset{I\in\imprecisefrcsts}{\constantfrcst{I}\text{ makes $\pth$ computably random}}.
\end{equation*}
As an immediate consequence of Propositions~\ref{prop:vacuous} and~\ref{prop:nestedfrcstsystems}, we find that this set of intervals is non-empty and increasing.

\begin{proposition}[Non-emptiness]\label{prop:constantcalibrated:top}
For all $\pth\in\pths$, $\frcsts\in\constantrandom{\mathrm{C}}$, so any sequence of outcomes $\pth$ has at least one stationary forecast that makes it computably random: $\constantrandom{\mathrm{C}}\neq\emptyset$.
\end{proposition}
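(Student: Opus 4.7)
The plan is to exhibit $\frcsts = [0,1]$ as an explicit witness in $\constantrandom{\mathrm{C}}$. By definition of the stationary forecasting system, $\constantfrcst{[0,1]}$ assigns the interval $[0,1]$ to every situation $s\in\sits$, so $\constantfrcst{[0,1]}$ coincides pointwise, and therefore literally, with the vacuous forecasting system $\frcstsystem_\mathrm{v}$ introduced just before Proposition~\ref{prop:vacuous}.

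Applying Proposition~\ref{prop:vacuous} then gives $\frcstsystem_\mathrm{v} = \constantfrcst{[0,1]} \in \random[\pth]{\mathrm{C}}$ for every $\pth\in\pths$, which by the defining equality for $\constantrandom{\mathrm{C}}$ yields $\frcsts\in\constantrandom{\mathrm{C}}$ for all $\pth$. The non-emptiness $\constantrandom{\mathrm{C}}\neq\emptyset$ follows at once.

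There is no real obstacle here: as the surrounding text announces, the proposition is an ``immediate consequence'' of Proposition~\ref{prop:vacuous}, and all the genuine work is hidden in the proof of that earlier proposition (which ultimately reduces to a specialisation of Theorem~\ref{thm:consistency} with vacuous local models, where no computable test supermartingale can grow, since all its increments $\Delta\test(s)$ must satisfy $\uex_{[0,1]}(\Delta\test(s))\leq0$ and hence $\Delta\test(s)\leq0$ on $\outcomes$). If one additionally wanted the companion ``increasing'' claim mentioned in the text, one would combine this witness with Proposition~\ref{prop:nestedfrcstsystems}: whenever $I\subseteq I^*$, the inclusion $\constantfrcst{I}\subseteq\constantfrcst{I^*}$ holds situation-wise, so computable randomness under the tighter stationary forecast transfers to the looser one, making $\constantrandom{\mathrm{C}}$ upward closed and in particular always containing $\frcsts$ as its top element.
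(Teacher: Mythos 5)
Your proof is correct and is essentially identical to the paper's, which likewise observes that $\constantfrcst{\frcsts}=\frcstsystem_\mathrm{v}$ and invokes Proposition~\ref{prop:vacuous}. (One small aside: the paper proves Proposition~\ref{prop:vacuous} directly from the fact that vacuous supermartingales are non-increasing—exactly the parenthetical argument you give—rather than as a specialisation of Theorem~\ref{thm:consistency}, but this does not affect the proof of the present statement.)
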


\begin{proposition}[Increasingness]\label{prop:constantcalibrated:increasing}
Consider any $\pth\in\pths$ and any $I,J\in\imprecisefrcsts$.
If $I\in\constantrandom{\mathrm{C}}$ and $I\subseteq J$, then also $J\in\constantrandom{\mathrm{C}}$.
\end{proposition}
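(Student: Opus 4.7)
The plan is to reduce this immediately to Proposition~\ref{prop:nestedfrcstsystems}, which already handles the general (non-stationary) monotonicity of computable randomness under enlarging the forecasts pointwise. Concretely, I would unwind the definition of the stationary forecasting systems $\constantfrcst{I}$ and $\constantfrcst{J}$: by construction $\constantfrcst{I}(s)=I$ and $\constantfrcst{J}(s)=J$ for every situation $s\in\sits$, so the hypothesis $I\subseteq J$ gives $\constantfrcst{I}(s)\subseteq\constantfrcst{J}(s)$ for all $s\in\sits$, i.e.\ $\constantfrcst{I}\subseteq\constantfrcst{J}$ in the sense of Proposition~\ref{prop:nestedfrcstsystems}.

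From there the conclusion follows directly. Since $I\in\constantrandom{\mathrm{C}}$ means, by definition of $\constantrandom{\mathrm{C}}$, that $\constantfrcst{I}\in\random{\mathrm{C}}$, i.e.\ $\pth$ is computably random for $\constantfrcst{I}$, Proposition~\ref{prop:nestedfrcstsystems} applied with $\frcstsystem=\constantfrcst{I}$ and $\frcstsystem^*=\constantfrcst{J}$ yields that $\pth$ is also computably random for $\constantfrcst{J}$, which is exactly the statement $J\in\constantrandom{\mathrm{C}}$.

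There is essentially no obstacle here: the real content is carried by Proposition~\ref{prop:nestedfrcstsystems}, whose proof in turn rests on the observation that enlarging the local forecast interval shrinks the cone of gambles available to Sceptic (equivalently, enlarges $\uex_{I_s}$-feasibility to $\uex_{J_s}$-feasibility), so every test supermartingale for $\constantfrcst{J}$ is also a test supermartingale for $\constantfrcst{I}$, and therefore remains bounded on $\pth$. The only thing to be careful about is not to conflate the set-theoretic inclusion $I\subseteq J$ of interval forecasts with any ordering of their endpoints; the whole argument is purely about pointwise inclusion of forecasting systems, and that is precisely what Proposition~\ref{prop:nestedfrcstsystems} was stated to cover.
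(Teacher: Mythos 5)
Your proof is correct and is exactly the paper's argument: the paper likewise observes that $I\subseteq J$ implies $\constantfrcst{I}\subseteq\constantfrcst{J}$ and then invokes Proposition~\ref{prop:nestedfrcstsystems}. Your closing remark about why Proposition~\ref{prop:nestedfrcstsystems} holds (namely $\testsupermartins[\constantfrcst{J}]\subseteq\testsupermartins[\constantfrcst{I}]$) also matches the paper's reasoning for that proposition.
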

\noindent
Theorem~\ref{thm:well:calibrated:general} implies the following property. 
However, quite remarkably, and seemingly in contrast with Theorem~\ref{thm:well:calibrated:general}, this result does not require any computability assumptions on the (stationary) forecasts.

\begin{corollary}[Church randomness]\label{cor:well:calibrated:constant}
Consider any outcome sequence $\pth=(x_1,\dots,x_n,\dots)$ in $\pths$ and any stationary interval forecast $I=\pinterval\in\constantrandom{\mathrm{C}}$ that makes $\pth$ computably random.
Then for any computable selection process $\selection\colon\sits\to\outcomes$ such that $\sum_{k=0}^n\selection(x_1,\dots,x_k)\to+\infty$:
\begin{equation*}
\lp
\leq\liminf_{n\to+\infty}
\frac{\sum_{k=0}^{n-1}\selection(x_1,\dots,x_k)x_{k+1}}
{\sum_{k=0}^{n-1}\selection(x_1,\dots,x_k)}
\leq\limsup_{n\to+\infty}
\frac{\sum_{k=0}^{n-1}\selection(x_1,\dots,x_k)x_{k+1}}
{\sum_{k=0}^{n-1}\selection(x_1,\dots,x_k)}
\leq\up.
\end{equation*}
\end{corollary}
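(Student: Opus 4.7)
The plan is to reduce Corollary~\ref{cor:well:calibrated:constant} to Theorem~\ref{thm:well:calibrated:general} applied to the computable gamble $f(x)=x$, while bypassing the fact that $I$ itself need not have computable endpoints. The key device is a rational sandwich combined with the monotonicity Proposition~\ref{prop:nestedfrcstsystems}.

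First, I would establish the lower bound. Fix any rational $\lptoo\in\frcsts$ with $\lptoo\leq\lp$, and any rational $\uptoo\in\frcsts$ with $\uptoo\geq\up$, and let $J\coloneqq\qinterval$. Then $I\subseteq J$, so $\constantfrcst{I}\subseteq\constantfrcst{J}$, and Proposition~\ref{prop:nestedfrcstsystems} gives that $\pth$ is still computably random for the stationary forecasting system $\constantfrcst{J}$. Since $\lptoo,\uptoo$ are rational, they are computable, so $\constantfrcst{J}$ is a computable forecasting system. The gamble $f\colon\outcomes\to\reals\colon x\mapsto x$ is trivially computable, and by~\eqref{eq:local:lower}, since $f(1)=1\geq0=f(0)$, we have $\lex_J(f)=\lptoo$. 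Theorem~\ref{thm:well:calibrated:general} then yields
\begin{equation*}
\liminf_{n\to+\infty}\frac{\sum_{k=0}^{n-1}\selection(\xvalto{k})\sqgroup[\big]{\xval{k+1}-\lptoo}}{\sum_{k=0}^{n-1}\selection(\xvalto{k})}\geq0,
\end{equation*}
which after rearranging yields $\liminf_{n\to+\infty}\nicefrac{\sum\selection\,\xval{k+1}}{\sum\selection}\geq\lptoo$. Taking the supremum over rational $\lptoo\leq\lp$ gives the desired lower bound $\geq\lp$.

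For the upper bound, I would apply the same argument to the (computable) gamble $-f\colon x\mapsto-x$. For this gamble $f(1)=-1\leq0=f(0)$, so~\eqref{eq:local:lower} gives $\lex_J(-f)=-\uptoo$. Theorem~\ref{thm:well:calibrated:general} then produces
\begin{equation*}
\liminf_{n\to+\infty}\frac{\sum_{k=0}^{n-1}\selection(\xvalto{k})\sqgroup[\big]{\uptoo-\xval{k+1}}}{\sum_{k=0}^{n-1}\selection(\xvalto{k})}\geq0,
\end{equation*}
equivalently $\limsup_{n\to+\infty}\nicefrac{\sum\selection\,\xval{k+1}}{\sum\selection}\leq\uptoo$; and taking the infimum over rational $\uptoo\geq\up$ gives $\leq\up$. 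The middle inequality $\liminf\leq\limsup$ is automatic.

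The main obstacle—and the reason the statement is remarkable—is the absence of a computability hypothesis on $\lp,\up$; without the rational sandwich one cannot invoke Theorem~\ref{thm:well:calibrated:general} directly. The sandwich works only because Proposition~\ref{prop:nestedfrcstsystems} is monotone with respect to interval inclusion and because $\lex_J(f)$ depends linearly and monotonically on the relevant endpoint of $J$, so the bounds sharpen to $\lp$ and $\up$ in the rational limit. No computability of $\lp$ or $\up$ is needed because the density of the rationals in $\frcsts$ suffices.
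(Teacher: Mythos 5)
Your proof is correct and follows essentially the same route as the paper's: the paper likewise handles the computable case via Theorem~\ref{thm:well:calibrated:general} applied to $f=\indsing{1}$ and $-\indsing{1}$, and then treats general $I$ by enclosing it in a slightly larger interval with rational (hence computable) endpoints and letting the enlargement tend to zero, using the monotonicity of computable randomness under interval inclusion. Your organisation via a supremum/infimum over rational endpoints rather than an explicit $\epsilon$ is only a cosmetic difference.
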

\noindent
The following proposition can of course be straightforwardly extended to any finite number of interval forecasts, and guarantees, together with Proposition~\ref{prop:constantcalibrated:increasing}, that $\constantrandom{\mathrm{C}}$ is a \emph{set filter}.

\begin{proposition}\label{prop:constantcalibrated:nonempty:intersection:inside:ML}
For any $\pth\in\pths$ and any two interval forecasts $I$ and $J$: if $I\in\constantrandom{\mathrm{C}}$ and $J\in\constantrandom{\mathrm{C}}$ then $I\cap J\neq\emptyset$, and $I\cap J\in\constantrandom{\mathrm{C}}$.
\end{proposition}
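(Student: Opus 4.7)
The plan is to handle non-emptiness and randomness in turn. Write $I=\pinterval$ and $J=\qinterval$.

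For non-emptiness, suppose for contradiction that $I\cap J=\emptyset$; without loss of generality, $\up<\lptoo$. Applying Corollary~\ref{cor:well:calibrated:constant} with the constant selection process $\selection\equiv 1$ (which trivially gives $\sum_{k=0}^{n}\selection(x_1,\dots,x_k)=n+1\to+\infty$), the hypothesis $I\in\constantrandom{\mathrm{C}}$ yields $\limsup_{n\to+\infty}\frac{1}{n}\sum_{k=0}^{n-1}x_{k+1}\leq\up$, and $J\in\constantrandom{\mathrm{C}}$ yields $\liminf_{n\to+\infty}\frac{1}{n}\sum_{k=0}^{n-1}x_{k+1}\geq\lptoo$. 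Since $\liminf\leq\limsup$, this forces $\lptoo\leq\up$, a contradiction.

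For the randomness claim, fix any computable $T\in\testsupermartins[\constantfrcst{I\cap J}]$ and aim to show $T$ is bounded on $\pth$. The key structural fact, obtained by piecewise-linear inspection of~\eqref{eq:local:upper} together with $I\cap J=\sqgroup{\max(\lp,\lptoo),\min(\up,\uptoo)}$, is the identity $\uex_{I\cap J}(f)=\min\set{\uex_I(f),\uex_J(f)}$ for every gamble $f$ on $\outcomes$. Hence at every situation $s$ the condition $\uex_{I\cap J}(\Delta T(s))\leq 0$ forces at least one of $\uex_I(\Delta T(s))\leq 0$ or $\uex_J(\Delta T(s))\leq 0$. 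This motivates a multiplicative split of $T$: writing $\mu_s(x)\coloneqq\Delta T(s)(x)/T(s)$ on $\set{T(s)>0}$ (and $\mu_s\coloneqq 0$ elsewhere), so that $T(sx)=T(s)(1+\mu_s(x))$ with $1+\mu_s\geq 0$, route each factor $1+\mu_s$ into a companion $T_I\in\testsupermartins[\constantfrcst{I}]$ at situations where $\mu_s$ is $\constantfrcst{I}$-valid and into $T_J\in\testsupermartins[\constantfrcst{J}]$ otherwise. Along $\pth$, one channel absorbs the factor while the other multiplies by $1$, so the factors telescope to yield $T\leq T_I\cdot T_J$. Boundedness of $T_I$ and $T_J$ on $\pth$ (from $I,J\in\constantrandom{\mathrm{C}}$) then forces $T$ to be bounded on $\pth$.

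The main obstacle is computability of this split: the routing criterion ``$\uex_I(\mu_s)\leq 0$'' is only co-semi-decidable for computable real processes, with equality undecidable. The fix is to replace the exact routing by a finite-precision tentative routing at level $|s|$ (computing $\uex_I(\mu_s)$ to precision $2^{-|s|}$ and routing based on sign with a buffer), and to absorb the resulting small slack by dyadic multiplicative corrections $(1+c\cdot 2^{-|s|})^{-1}$ whose path-product $\prod_{k\geq 0}(1+c\cdot 2^{-k})$ is bounded. This produces genuinely computable $T_I\in\testsupermartins[\constantfrcst{I}]$ and $T_J\in\testsupermartins[\constantfrcst{J}]$ with $T\leq C\cdot T_I\cdot T_J$ for a finite constant $C$, concluding the proof.
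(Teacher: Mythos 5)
Your non-emptiness argument is exactly the paper's: apply Corollary~\ref{cor:well:calibrated:constant} with $\selection\equiv1$ to force the limits inferior and superior of the relative frequencies into both $I$ and $J$, so the intervals must overlap. For the second claim, your overall strategy---factor a computable test supermartingale for $\constantfrcst{I\cap J}$ into a product of one for $\constantfrcst{I}$ and one for $\constantfrcst{J}$ and use boundedness of the factors---is also the paper's, but your way of performing the split has a genuine gap. Your routing rule requires computing $\uex_I(\mu_s)$ (and $\uex_J(\mu_s)$) to precision $2^{-\abs{s}}$, which is only possible when the endpoints of $I$ and $J$ are computable reals; the proposition makes no such assumption, as $I$ and $J$ are arbitrary elements of $\imprecisefrcsts$. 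Nor can you reduce to computable intervals: enlarging $I\cap J$ changes the claim, and shrinking $I$ or $J$ does not preserve membership of $\constantrandom{\mathrm{C}}$. A secondary issue is that your definition of $\mu_s$ by cases on whether $T(s)>0$ is itself not a computable construction; you first need the reduction of Proposition~\ref{prop:impreciseC:equivalence} to a \emph{positive} test supermartingale of the form $\mint$ with $\multprocess$ computable before dividing.

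The paper avoids all of this with an exact, interval-free factorisation of the multiplier. After normalising to $\lptoo<\lp\leq\uptoo<\up$, so that $K=I\cap J=[\lp,\uptoo]$, it sets $\multprocess_I(s)(1)\coloneqq\min\set{\multprocess(s)(1),1}$ and $\multprocess_I(s)(0)\coloneqq\max\set{\multprocess(s)(0),1}$, and dually for $\multprocess_J$; since $a=\min\set{a,1}\max\set{a,1}$ for $a\geq0$, this gives $\multprocess=\multprocess_I\multprocess_J$ pointwise, with no routing decision, no slack, and no reference to $I$ or $J$ in the construction (clipping at the rational constant $1$ manifestly preserves computability). That $\multprocess_I$ is a supermartingale multiplier for $\constantfrcst{I}$ then follows from the case analysis in Equation~\eqref{eq:local:upper}, using that $\multprocess_I(s)(1)\leq1\leq\multprocess_I(s)(0)$ always pins down which endpoint of $I$ attains the upper expectation. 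Your buffered-routing idea could probably be repaired by basing the decision on an approximate comparison of $\mu_s(1)$ with $\mu_s(0)$---which does not involve $I$ or $J$, and whose $O(2^{-\abs{s}})$ error translates into the summable slack you describe---but as written the argument is not complete.
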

\noindent
This result also tells us that the collection $\constantrandom{\mathrm{C}}$ of closed subsets of the compact set $\frcsts$ has the finite intersection property, and its intersection is therefore a non-empty closed interval: $\bigcap\constantrandom{\mathrm{C}}=\sqgroup{\lowersmallestrandom[\pth]{\mathrm{C}},\uppersmallestrandom[\pth]{\mathrm{C}}}$.
Propositions~\ref{prop:constantcalibrated:increasing} and~\ref{prop:constantcalibrated:nonempty:intersection:inside:ML} guarantee that all intervals $\sqgroup{\lowersmallestrandom[\pth]{\mathrm{C}}-\epsilon_1,\uppersmallestrandom[\pth]{\mathrm{C}}+\epsilon_2}$ in $\imprecisefrcsts$ with $\epsilon_1,\epsilon_2>0$ belong to $\constantrandom{\mathrm{C}}$.
But we will see in the next section that this does not generally hold for $\epsilon_1=0$ and/or $\epsilon_2=0$.
For this reason, we now define the following two subsets of $\frcsts$:
\begin{equation*}
\lowerconstantrandom[\pth]{\mathrm{C}}
\coloneqq\cset{\min I}{I\in\constantrandom{\mathrm{C}}}
\text{ and }
\upperconstantrandom[\pth]{\mathrm{C}}
\coloneqq\cset{\max I}{I\in\constantrandom{\mathrm{C}}}.
\end{equation*}
Then Proposition~\ref{prop:constantcalibrated:increasing} guarantees that $\lowerconstantrandom[\pth]{\mathrm{C}}$ is a decreasing set, and that $\upperconstantrandom[\pth]{\mathrm{C}}$ is increasing.
They are therefore both subintervals of $\frcsts$.
Obviously, $\lowersmallestrandom[\pth]{\mathrm{C}}=\sup\lowerconstantrandom[\pth]{\mathrm{C}}$ and $\uppersmallestrandom[\pth]{\mathrm{C}}=\inf\upperconstantrandom[\pth]{\mathrm{C}}$.
On the one hand clearly $\lowerconstantrandom[\pth]{\mathrm{C}}=[0,\lowersmallestrandom[\pth]{\mathrm{C}})$ or $\lowerconstantrandom[\pth]{\mathrm{C}}=[0,\lowersmallestrandom[\pth]{\mathrm{C}}]$, and on the other hand $\upperconstantrandom[\pth]{\mathrm{C}}=(\uppersmallestrandom[\pth]{\mathrm{C}},1]$ or $\upperconstantrandom[\pth]{\mathrm{C}}=[\uppersmallestrandom[\pth]{\mathrm{C}},1]$. 
Proposition~\ref{prop:constantcalibrated:nonempty:intersection:inside:ML} easily allows us to give the following simple description of the set $\constantrandom{\mathrm{C}}$ in terms of $\lowerconstantrandom[\pth]{\mathrm{C}}$ and $\upperconstantrandom[\pth]{\mathrm{C}}$:
\begin{equation*}
I\in\constantrandom[\pth]{\mathrm{C}}
\ifandonlyif
\group[\Big]{%
\min I\in\lowerconstantrandom[\pth]{\mathrm{C}}
\text{ and }
\max I\in\upperconstantrandom[\pth]{\mathrm{C}}
}.
\end{equation*}
A trivial example is given by:
\begin{proposition}\label{prop:constantcalibrated:computable:sequence}
If the sequence $\pth$ is computable with infinitely many zeroes and ones, then $\constantrandom{\mathrm{C}}=\set{[0,1]}$, and therefore $\lowerconstantrandom[\pth]{\mathrm{C}}=\set{0}$, $\upperconstantrandom[\pth]{\mathrm{C}}=\set{1}$, $\lowersmallestrandom[\pth]{\mathrm{C}}=0$ and $\uppersmallestrandom[\pth]{\mathrm{C}}=1$. 
\end{proposition}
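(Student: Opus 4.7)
The plan is to prove the equality $\constantrandom{\mathrm{C}}=\set{[0,1]}$; the remaining claims about $\lowerconstantrandom[\pth]{\mathrm{C}}$, $\upperconstantrandom[\pth]{\mathrm{C}}$, $\lowersmallestrandom[\pth]{\mathrm{C}}$ and $\uppersmallestrandom[\pth]{\mathrm{C}}$ then fall out by unwinding their definitions. The inclusion $\set{[0,1]}\subseteq\constantrandom{\mathrm{C}}$ is immediate from Proposition~\ref{prop:vacuous}, since $\constantfrcst{[0,1]}$ coincides with the vacuous forecasting system. For the reverse inclusion I argue the contrapositive: given any $I=\pinterval\in\imprecisefrcsts$ with $\up<1$ or $\lp>0$, I construct a computable test supermartingale for $\constantfrcst{I}$ that is unbounded along $\pth$. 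There are two symmetric cases, which exploit, respectively, the infinitely many ones and the infinitely many zeroes that occur along the computable path $\pth$.

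Consider first $\up<1$. Fix rationals $q\in(\up,1)$ and $\alpha\in(0,1)$, and define a real process $\supermartin$ on $\sits$ recursively by $\supermartin(\init)\coloneqq 1$ and
\begin{equation*}
\supermartin(s\cdot x)\coloneqq \supermartin(s)+\beta_s(x-q), \quad x\in\outcomes,
\end{equation*}
where $\beta_s\coloneqq\alpha\supermartin(s)/q$ if $s=\pth^{\abs{s}}$ and $\pth_{\abs{s}+1}=1$, and $\beta_s\coloneqq 0$ otherwise. Three checks then remain, all routine. First, $\supermartin\geq 0$: off the path $\supermartin$ stays constant from its exit point, and the only non-trivial exit value is $\supermartin(\pth^n\cdot 0)=\supermartin(\pth^n)(1-\alpha)\geq 0$ in the situations where $\pth_{n+1}=1$. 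Second, $\supermartin\in\testsupermartins[\constantfrcst{I}]$: each increment $\beta_s(x-q)$ has upper expectation $\beta_s(\up-q)\leq 0$ by Equation~\eqref{eq:local:upper} and the choice $q>\up$. Third, $\supermartin$ is computable: since $\pth$ is computable, the predicate ``$s$ is a prefix of $\pth$'' is computable on $\sits$, and the recursion then delivers rational values that are uniformly effectively approximable in the sense of Section~\ref{sec:computability}. Along $\pth$, the value $\supermartin(\pth^n)$ is multiplied by $1+\alpha(1-q)/q>1$ at each step where $\pth_{n+1}=1$, so the infinitude of ones forces $\supermartin(\pth^n)\to+\infty$.

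The case $\lp>0$ is entirely symmetric: pick rationals $p\in(0,\lp)$ and $\alpha\in(0,1)$, take $\beta_s\coloneqq\alpha\supermartin(s)/(1-p)$ with the gamble $\beta_s(p-x)$ whenever $s=\pth^{\abs{s}}$ and $\pth_{\abs{s}+1}=0$, and $\beta_s\coloneqq 0$ otherwise; the infinitely many zeroes in $\pth$ then drive the product $\group[\big]{1+\alpha p/(1-p)}^{k(n)}$ to infinity along $\pth$, where $k(n)$ counts the zeroes in $\pth^n$. Combining the two cases yields $\constantrandom{\mathrm{C}}\subseteq\set{[0,1]}$, completing the argument. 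The only mildly delicate point in the whole proof is the computability verification in step three above; this reduces to observing that a computable $\pth$ makes the prefix-predicate on $\sits$ decidable, so that a finite rational recursion suffices to define $\supermartin$ everywhere.
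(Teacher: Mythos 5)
Your proof is correct, but it takes a genuinely different route from the paper's. The paper disposes of the reverse inclusion in three lines by invoking Corollary~\ref{cor:well:calibrated:constant}: since $\pth$ is computable, the selection processes $\selection_1$ and $\selection_0=1-\selection_1$ that select the positions of the ones and of the zeroes respectively are computable, and the resulting selected relative frequencies are identically $1$ and $0$, forcing $\max I\geq1$ and $\min I\leq0$ for any $I\in\constantrandom{\mathrm{C}}$. You instead build explicit computable test supermartingales for $\constantfrcst{I}$ when $\up<1$ (respectively $\lp>0$), betting a fixed fraction of capital on the known next outcome at prefixes of $\pth$; your verifications of non-negativity, the supermartingale inequality via Equation~\eqref{eq:local:upper}, computability (decidability of the prefix predicate plus a finite rational recursion), and unboundedness along $\pth$ all check out, including the choice of rational $q\in(\up,1)$ and $p\in(0,\lp)$. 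What your approach buys is self-containedness: it bypasses the law-of-large-numbers machinery (Lemma~\ref{lem:martinwlln}, Theorems~\ref{thm:well:calibrated} and~\ref{thm:well:calibrated:general}) underlying Corollary~\ref{cor:well:calibrated:constant}, essentially re-deriving by hand the special case of that machinery needed here. What the paper's approach buys is brevity and uniformity, since the same corollary also drives Propositions~\ref{prop:constantcalibrated:nonempty:intersection:inside:ML} and~\ref{prop:first:example}.
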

\noindent
At the other extreme, there are the sequences $\pth$ that are computably random for some \emph{precise} stationary forecasting system $\prcsfrcstsystem$, with $p\in\frcsts$.
They are amongst the random sequences that have received most attention in the literature, thus far.
For any such sequence, $\constantrandom[\pth]{\mathrm{C}}=\cset{I\in\imprecisefrcsts}{p\in I}$, $\lowerconstantrandom[\pth]{\mathrm{C}}=[0,p]$ and $\upperconstantrandom[\pth]{\mathrm{C}}=[p,0]$, and therefore also $\lowersmallestrandom[\pth]{\mathrm{C}}=\uppersmallestrandom[\pth]{\mathrm{C}}=p$.

We show in the next section that, in between these extremes of total imprecision and maximal precision, there lies a---to the best of our knowledge---previously uncharted realm of sequences, with similar (and even in some sense `larger') unpredictability than the ones traditionally called `computably random', for which $\lowerconstantrandom[\pth]{\mathrm{C}}$ and $\upperconstantrandom[\pth]{\mathrm{C}}$ need not always be closed, and more importantly, for which $0<\lowersmallestrandom[\pth]{\mathrm{C}}<\uppersmallestrandom[\pth]{\mathrm{C}}<1$.
This is what we mean when we claim that `computable randomness is inherently imprecise'.

\section{Randomness is inherently imprecise}\label{sec:inherently}
Our work on imprecise Markov chains \citep{cooman2015:markovergodic} has taught us that in some cases, we can very efficiently compute tight bounds on expectations in non-stationary precise Markov chains, by replacing them with their stationary imprecise versions.
Similarly, in statistical modelling, when learning from data sampled from a distribution with a varying (non-stationary) parameter, it seems hard to estimate the exact time sequence of its values. 
But we may be more successful in learning about its (stationary) interval \emph{range}.
This idea was also considered earlier by \citet{fierens2009:frequentist}, when they argued for a frequentist interpretation of imprecise probability models based on non-stationarity.

In this section, we exploit this idea, by showing that randomness associated with non-stationary precise forecasting systems can be captured by a stationary forecasting system, which must then be less precise: we gain simplicity of representation, but pay for it by losing precision.

We begin with a simple example.
Consider any $p$ and $q$ in $\frcsts$ with $p\leq q$, and any outcome sequence $\pth=(x_1,\dots,x_n,\dots)$ that is computably random for the forecasting system $\frcstsystem_{p,q}$ that is defined by
\begin{equation*}
\frcstsystem_{p,q}(z_1,\dots,z_n)
\coloneqq
\begin{cases}
p &\text{if $n$ is odd}\\
q &\text{if $n$ is even}
\end{cases}
\quad\text{for all $(z_1,\dots,z_n)\in\sits$.}\vspace{2pt}
\end{equation*}
We know from Corollary~\ref{cor:consistency} that there is at least one such outcome sequence.
It turns out that the stationary forecasting systems that make such $\pth$ computably random have a simple characterisation:

\begin{proposition}\label{prop:first:example}
Consider any $\pth$ that is computably random for the forecasting system $\frcstsystem_{p,q}$.
Then for all $I\in\imprecisefrcsts$, $I\in\constantrandom{\mathrm{C}}\ifandonlyif\sqgroup{p,q}\subseteq I$.
\end{proposition}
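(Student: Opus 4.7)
My plan is to handle the two directions separately. The $\neht$ direction is a quick consequence of Proposition~\ref{prop:nestedfrcstsystems}: if $\sqgroup{p,q}\subseteq I$, then in every situation $s$ the precise forecast $\frcstsystem_{p,q}(s)$ equals either $\set{p}$ or $\set{q}$, both of which are subsets of $I=\constantfrcst{I}(s)$; so $\frcstsystem_{p,q}\subseteq\constantfrcst{I}$, and computable randomness of $\pth$ for $\frcstsystem_{p,q}$ transfers to $\constantfrcst{I}$.

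For the $\then$ direction, my plan is to first establish that the subsequence $(x_2,x_4,\dots)$ has asymptotic relative frequency equal to $p$ and that $(x_1,x_3,\dots)$ has asymptotic relative frequency equal to $q$, and then pin each limit inside $I$ via Corollary~\ref{cor:well:calibrated:constant}. The obstacle here is that Theorem~\ref{thm:well:calibrated:general} demands a \emph{computable} forecasting system, and $\frcstsystem_{p,q}$ need not be computable. I will sidestep this by enveloping: for any rationals $r_1<p<r_2$, define the forecasting system $\frcstsystem^{(r_1,r_2)}$ that assigns $\sqgroup{r_1,r_2}$ in situations of odd length and the vacuous interval $\sqgroup{0,1}$ in situations of even length. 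This enveloping system is computable, and $\frcstsystem_{p,q}(s)\subseteq\frcstsystem^{(r_1,r_2)}(s)$ for every $s$, so Proposition~\ref{prop:nestedfrcstsystems} makes $\pth$ computably random for $\frcstsystem^{(r_1,r_2)}$ as well.

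I would then apply Theorem~\ref{thm:well:calibrated:general} to $\frcstsystem^{(r_1,r_2)}$ with the computable selection process $\selection_{\mathrm{odd}}$ that fires exactly when $k$ is odd, and with the computable gambles $f(x)=x$ and $f(x)=-x$. Using $\lex_{\sqgroup{r_1,r_2}}(x)=r_1$ and $\lex_{\sqgroup{r_1,r_2}}(-x)=-r_2$ at the odd-length situations where $\selection_{\mathrm{odd}}$ is non-zero, this yields $r_1\leq\liminf_n A_n\leq\limsup_n A_n\leq r_2$, where $A_n$ is the average of the $x_{k+1}$ over $k<n$ with $k$ odd. Letting $r_1\uparrow p$ and $r_2\downarrow p$ through the rationals pinches $\lim_n A_n=p$. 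A symmetric argument, using the selection process that fires when $k$ is even together with rational envelopes around $q$, produces the analogous limit $q$ along the complementary subsequence.

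Finally, since $I\in\constantrandom[\pth]{\mathrm{C}}$ by hypothesis, I invoke Corollary~\ref{cor:well:calibrated:constant} with the same two computable selection processes. The two limits just obtained must now also lie in $\sqgroup{\lp[I],\up[I]}=I$, so $p,q\in I$, and since $I$ is an interval and $p\leq q$ this delivers $\sqgroup{p,q}\subseteq I$. The principal technical move is the rational enveloping construction, which produces sharp frequency limits without any computability assumption on $p$ or $q$; everything else is a direct application of results already established in the paper.
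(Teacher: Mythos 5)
Your proposal is correct and follows essentially the same route as the paper's own proof: the $\neht$ direction via Proposition~\ref{prop:nestedfrcstsystems}, and the $\then$ direction by enveloping $\frcstsystem_{p,q}$ in a computable forecasting system with rational interval forecasts so that Theorem~\ref{thm:well:calibrated:general} applies, then combining the resulting frequency bounds with Corollary~\ref{cor:well:calibrated:constant}. The only cosmetic differences are that the paper's envelope pinches $p$ and $q$ simultaneously with a single $\epsilon$ and concludes $\min I\leq p+\epsilon$ directly, whereas you first pinch the subsequence limits to exactly $p$ and $q$ and then place them in $I$; both are valid.
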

\noindent Its proof relies on a very simple argument involving Corollary~\ref{cor:well:calibrated:constant}.
This result implies in particular also that $\lowerconstantrandom[\pth]{\mathrm{C}}=\sqgroup{0,p}$, $\upperconstantrandom[\pth]{\mathrm{C}}=\sqgroup{q,1}$, $\lowersmallestrandom[\pth]{\mathrm{C}}=p$ and $\uppersmallestrandom[\pth]{\mathrm{C}}=q$.

Next, we turn to a more complicated example, where we look at sequences that are `nearly' computably random for the stationary precise forecast $\nicefrac{1}{2}$, but not quite.
This example was inspired by the ideas involving Hellinger-like divergences in a beautiful paper by \citet{vovk2009:merging}.

Consider the following sequence $\{p_n\}_{n\in\naturals}$ of precise forecasts:
\begin{equation*}
p_{n}\coloneqq\frac{1}{2}+(-1)^{n}\delta_{n}
\text{, with }
\delta_n\coloneqq e^{-\frac{1}{n+1}}\sqrt{e^{\frac{1}{n+1}}-1}
\text{ for all $n\in\naturals$},
\end{equation*}
converging to $\nicefrac{1}{2}$.
Observe that the sequence $\delta_n$ is decreasing towards its limit $0$ and that $\delta_n\in(0,\nicefrac{1}{2})$ and $p_n\in(0,1)$, for all $n\in\naturals$.
Now consider any outcome sequence $\pth=(x_1,\dots,x_n,\dots)$ that is computably random for the precise forecasting system $\xmplfrcstsystem$ that is defined by
\begin{equation*}
\xmplfrcstsystem(z_1,\dots,z_{n-1})\coloneqq p_{n}
\text{ for all $n\in\naturals$ and $(z_1,\dots,z_{n-1})\in\sits$.}
\end{equation*}
We know from Corollary~\ref{cor:consistency} that there is at least one such outcome sequence.
It turns out that the stationary forecasting systems that make such $\pth$ computably random have a simple characterisation:

\begin{proposition}\label{prop:second:example}
Consider any $\pth$ that is computably random for the forecasting system $\frcstsystem_{\sim\nicefrac{1}{2}}$.
Then for all $I\in\imprecisefrcsts$, $I\in\constantrandom{\mathrm{C}}$ if and only if\/ $\min I<\nicefrac{1}{2}$ and\/ $\max I>\nicefrac{1}{2}$.
\end{proposition}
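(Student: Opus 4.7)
I split the proof into the two directions of the biconditional.

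For the sufficiency direction ($\min I<\nicefrac{1}{2}<\max I$ implies $I\in\constantrandom{\mathrm{C}}$), since $p_n\to\nicefrac{1}{2}$ lies in the interior of $I$, I pick a (finite) index $N$ with $p_n\in I$ for all $n>N$. Given any computable test supermartingale $\test$ for $\constantfrcst{I}$, I pick a computable rational $M$ such that $\test(s)\leq M$ for all $s$ of length $N$, and define $\test'(s)\coloneqq 1$ for $\abs{s}<N$ and $\test'(s)\coloneqq\test(s)/M$ for $\abs{s}\geq N$. A level-by-level check shows $\test'$ is a computable test supermartingale for $\xmplfrcstsystem$: below depth $N-1$ the increment vanishes; at depth $N-1$ it is pointwise non-positive because $\test(s)/M\leq 1$; and at depth $\abs{s}\geq N$ the inequality $\uex_I(\Delta\test(s))\leq 0$ transfers to $\xmplfrcstsystem$ since $\set{p_{\abs{s}+1}}\subseteq I$ gives $\uex_{\set{p_{\abs{s}+1}}}(\Delta\test(s))\leq\uex_I(\Delta\test(s))\leq 0$. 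Computable randomness of $\pth$ for $\xmplfrcstsystem$ then bounds $\test'$ on $\pth$ and thus $\test$ as well.

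For the necessity direction, first apply Theorem~\ref{thm:well:calibrated:general} to the computable $\xmplfrcstsystem$ with $\selection\equiv 1$ and the computable gambles $f(x)\coloneqq\pm x$ to conclude that $\lim_n(1/n)\sum_{k=1}^n\xval{k}=\lim_n(1/n)\sum_{k=1}^np_k=\nicefrac{1}{2}$; Corollary~\ref{cor:well:calibrated:constant} then forces $\min I\leq\nicefrac{1}{2}\leq\max I$. It remains to rule out $\min I=\nicefrac{1}{2}$ and, symmetrically (by swapping $0\leftrightarrow 1$ and odd/even), $\max I=\nicefrac{1}{2}$. Assuming $\min I=\nicefrac{1}{2}$, I build a computable test supermartingale for $\constantfrcst{I}$ that diverges on $\pth$: define multiplicatively $\test(\xvalto{n})\coloneqq\prod_{k=1}^n\rho_k(\xval{k})$ with $\rho_k\equiv 1$ on even $k$ and, on odd $k$, $\rho_k(0)\coloneqq 1+\delta_k$, $\rho_k(1)\coloneqq 1-\delta_k$. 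On odd $k$ the gamble $\rho_k-1$ is decreasing in $x$, so the maximum of $\ex_p(\rho_k-1)=\delta_k(1-2p)$ over $p\in I$ is attained at $p=\nicefrac{1}{2}$ and equals $0$; thus $\test$ is indeed a computable test supermartingale for $\constantfrcst{I}$.

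To show $\test(\pth^n)\to+\infty$, I pass to the reciprocal $Q\coloneqq 1/\test$, given by $Q(\xvalto{n})=\prod_{k=1}^n 1/\rho_k(\xval{k})$. Direct computation yields $\ex_{p_k}(1/\rho_k)=(1-2\delta_k^2)/(1-\delta_k^2)\in(0,1)$ on odd $k$ and $1$ on even $k$, so the normalization $\tilde Q(\xvalto{n})\coloneqq Q(\xvalto{n})/\prod_{k=1}^n\ex_{p_k}(1/\rho_k)$ is a computable test martingale—and hence a computable test supermartingale—for $\xmplfrcstsystem$. Computable randomness of $\pth$ for $\xmplfrcstsystem$ gives $\tilde Q(\pth^n)\leq B$ for some $B$, whence $Q(\pth^n)\leq B\prod_{k=1}^n\ex_{p_k}(1/\rho_k)\leq B\prod_{k\text{ odd},k\leq n}(1-\delta_k^2)$, using the elementary inequality $(1-2x)/(1-x)\leq 1-x$ for $x\in[0,\nicefrac{1}{2})$. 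The explicit formula $\delta_k^2=e^{-1/(k+1)}-e^{-2/(k+1)}\sim 1/(k+1)$ yields $\sum\delta_k^2=+\infty$, so this bound tends to $0$; hence $Q(\pth^n)\to 0$ and $\test(\pth^n)\to+\infty$, contradicting $I\in\constantrandom{\mathrm{C}}$. The main obstacle is turning the heuristic ``$\test$ drifts upward in $\xmplfrcstsystem$-expectation'' into path-wise divergence; the decisive point is the identity $\ex_{p_k}(1/\rho_k)<1$ on odd $k$, which makes $Q$ an honest $\xmplfrcstsystem$-supermartingale, so that boundedness of the normalized $\tilde Q$ on the $\xmplfrcstsystem$-random path, combined with the Hellinger-like divergence $\sum\delta_k^2=+\infty$ baked into the choice of $\delta_n$, does all the work.
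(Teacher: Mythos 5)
Your proof is correct, and its core mechanism is the paper's: in both directions you play the candidate stationary system off against $\xmplfrcstsystem$, exploiting that every computable test supermartingale for $\xmplfrcstsystem$ stays bounded on $\pth$. The differences are in execution. For necessity, the paper directly exhibits, for every $\epsilon>0$, Hellinger-type square-root multipliers $e^{1/(2(n+1))}\sqrt{2\pmass[n]}$ and their reciprocals, whose product of cumulative products is the deterministic divergent sequence $\exp\group[\big]{\sum_k\nicefrac{1}{(k+1)}}$, showing $\sqgroup{\nicefrac{1}{2}-\epsilon,\nicefrac{1}{2}}\notin\constantrandom{\mathrm{C}}$ and $\sqgroup{\nicefrac{1}{2},\nicefrac{1}{2}+\epsilon}\notin\constantrandom{\mathrm{C}}$ outright (which, with increasingness, kills every $I$ with $\min I\geq\nicefrac{1}{2}$ or $\max I\leq\nicefrac{1}{2}$); you instead dispose of $\min I\leq\nicefrac{1}{2}\leq\max I$ via Church randomness and only need to kill the exact boundary cases, which you do with the more elementary linear multipliers $1\pm\delta_k$ plus an explicit normalization. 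The decisive identity is the same in both: the product of your $\test$ with a computable $\xmplfrcstsystem$-test martingale is deterministic and divergent, driven by $\sum_k\delta_k^2=+\infty$. Your version makes the role of the Hellinger-like divergence more transparent and avoids the square roots; the paper's yields the slightly stronger quantitative non-membership statements. For sufficiency, both arguments neutralize the finite initial segment where $p_n\notin I$ --- you by rescaling the given supermartingale from depth $N$ on, the paper by setting a hypothetical diverging multiplier to $1$ before $n_\alpha$ --- which are equivalent devices; just note explicitly that $N$ and the rational bound $M$ need not be obtained effectively from $I$ or $\test$: their mere existence as fixed numbers suffices for $\test'$ to be computable whenever $\test$ is.
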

\noindent This result implies in particular that $\lowerconstantrandom[\pth]{\mathrm{C}}=[0,\nicefrac{1}{2})$, $\upperconstantrandom[\pth]{\mathrm{C}}=(\nicefrac{1}{2},1]$ and $\lowersmallestrandom[\pth]{\mathrm{C}}=\uppersmallestrandom[\pth]{\mathrm{C}}=\nicefrac{1}{2}$.

\section{Conclusion}
Even with the limited number of examples we have been able to examine in this paper, it becomes apparent that incorporating imprecision in the study of randomness allows for much more mathematical structure to arise, which we would argue lets us better understand and place the existing results in the precise limit.

In our argumentation that `randomness is inherently imprecise', we are well aware that we are restricting ourselves to stationary forecasts. 
Our examples in Section~\ref{sec:inherently} all involve sequences that are computably random for a precise non-stationary forecasting system, but no longer computably random for any stationary precise variant. 
To make our claim irrefutable, we would have to show that there are sequences that are computably random for forecasting systems more precise than the vacuous one, but not for any (computable) precise forecasting system. 
Or in other words, that there is `randomness' or `unpredictability' that cannot be `explained' by any non-stationary (computable) precise forecasting system.
We will keep this challenge foremost in our minds.
Nevertheless, the examples in Section~\ref{sec:inherently} do indicate that it is in some ways possible to replace an `explanation' by a complex non-stationary precise forecasting model by a(n infinite filter of) more imprecise stationary one(s).

This work may seem promising, but we are well aware that it is only a humble beginning.
We see many extensions in many directions.
First of all, we want to find out if our approach can also be used to find interval versions of \emph{Martin-L\"of} and \emph{Schnorr randomness} \citep{ambosspies2000,bienvenu2009:randomness} with similarly interesting properties and conclusions.
Secondly, our preliminary exploration suggests that it will be possible to formulate equivalent randomness definitions in terms of \emph{randomness tests}, rather than supermartingales, but this needs to be checked in much more detail.
Thirdly, the approach we follow here is not prequential: we assume that our Forecaster specifies an entire forecasting system $\frcstsystem$, or in other words an interval forecast in all possible situations $(\xvalto{n})$, rather than only interval forecasts in those situations $\zvalto{n}$ of the sequence $\pth=(\zvalto{n},\dots)$ whose potential randomness we are considering.
The \emph{prequential approach}, which we eventually will want to come to, looks at the randomness of a sequence of interval forecasts and outcomes $(I_1,z_1,I_2,z_2,\dots,I_n,z_n,\dots)$, where each $I_k$ is an interval forecast for the as yet unknown $\randout[k]$, which is afterwards revealed to be $z_k$, without the need of specifying forecasts in situations that are never reached; see the paper by \citet{vovk2010:randomness} for an account of how this works for precise forecasts. 
Fourthly, we need to connect our work with earlier approaches to associating imprecision with randomness\ \citep{walley1982a,fierens2009:frequentist,fierens2009:chaotic,gorban2016:statisticalstability}.
And finally, and perhaps most importantly, we believe this research could be a very early starting point for an approach to statistics that takes imprecise or set-valued parameters more seriously, when learning from finite amounts of data.

\acks{This research started with discussions between Gert and Philip Dawid about what prequential interval forecasting would look like, during a joint stay at Durham University in late 2014. 
Gert, and Jasper who joined in late 2015, wrote an early prequential version of the present paper during a joint research visit to the Universities of Strathclyde and Durham in May 2016, trying to extend the results by Volodya Vovk~\citep{vovk1987:randomness,vovk2009:merging,vovk2010:randomness} to make them allow for interval forecasts.
In an email exchange, Volodya pointed out a number of difficulties with our approach, which we were able to resolve by letting go of its prequential emphasis, at least for the time being.
This was done during research visits of Gert to Jasper at IDSIA in late 2016 and early 2017.

We are grateful to Philip and Volodya for their inspiring and helpful comments and guidance.
Gert's research and travel were partly funded through project number G012512N of the Research Foundation -- Flanders (FWO), Jasper is a Postdoctoral Fellow of the FWO and wishes to acknowledge its financial support.}

\vskip 0.2in
% \bibliography{general}

\newpage
\appendix
\section{Proofs and additional material}
\label{app:proofs}
In this Appendix, we have gathered all proofs, and all additional material necessary for understanding the argumentation in these proofs.

% \subsection{Proofs and additional material for Section~\ref{sec:introduction}}
% \label{app:introduction}

% \subsection{Proofs and additional material for Section~\ref{sec:single:forecast}}
% \label{app:single:forecast}

\subsection{Additional material for Section~\ref{sec:forecasting:systems}}
\label{app:forecasting:systems}
In the interest of understanding the proofs, we need to pay attention to a particular way of constructing test supermartingales.
We define a \emph{multiplier process} as a map $\multprocess$ from $\sits$ to \emph{non-negative} gambles on $\outcomes$
Given such a multiplier process $\multprocess$, we can construct a non-negative real process $\mint$ by the recursion equation $\mint(sx)\coloneqq\mint(s)\multprocess(s)(x)$ for all $s\in\sits$ and $x\in\outcomes$, with $\mint(\init)\coloneqq1$.
Any multiplier process $\multprocess$ that satisfies the additional condition that $\uex_{\frcstsystem(s)}(\multprocess(s))\leq1$ for all $s\in\sits$, is called a \emph{supermartingale multiplier} for the forecasting system $\frcstsystem$.
It is easy to see that the non-negative real process $\mint$ is then a test supermartingale for $\frcstsystem$: it suffices to check that
\begin{equation}\label{eq:supermartingale:multiplier:differences}
\adddelta\mint(s)
=\mint(s)[\multprocess(s)-1],
\end{equation} 
and therefore $\uex_{\frcstsystem(s)}(\adddelta\mint(s))=\mint(s)[\uex_{\frcstsystem(s)}(\multprocess(s))-1]\leq0$, due to the coherence properties~\ref{axiom:coherence:homogeneity} and~\ref{axiom:coherence:constantadditivity} of upper expectation operators.

\subsection{Additional material for Section~\ref{sec:computability}}
\label{app:computability}
We give a brief survey of those basic notions and results from computability theory that are relevant to the proofs in this appendix.
For a much more extensive discussion, we refer, for instance to the books by \citet{pourel1989} and \citet{li1993}. 
The discussion in Section~\ref{sec:computability} is at times similar---and even identical---but is overall more limited in scope, as it only deals with aspects that are relevant to the main text.

A \emph{computable} function $\phi\colon\naturalswithzero\to\naturalswithzero$ is a function that can be computed by a Turing machine.
All further notions of computability that we will need are based on this basic notion.
It is clear that it in this definition, we can replace any of the $\naturalswithzero$ with any other countable set.

We start with the definition of a computable real number.
We call a sequence of rational numbers $r_n$ \emph{computable} if there are three computable functions $a,b,\sigma$ from $\naturalswithzero$ to $\naturalswithzero$ such that
\begin{equation*}
b(n)>0
\text{ and }
r_n=(-1)^{\sigma(n)}\frac{a(n)}{b(n)}
\text{ for all $n\in\naturalswithzero$},
\end{equation*}
and we say that it \emph{converges effectively} to a real number $x$ if there is some computable function $e\colon\naturalswithzero\to\naturalswithzero$ such that
\begin{equation*}
n\geq e(N)\then\abs{r_n-x}\leq2^{-N}
\text{ for all $n,N\in\naturalswithzero$}.
\end{equation*}
A real number is then called \emph{computable} if there is a computable sequence of rational numbers that converges effectively to it.
Of course, every rational number is a computable real.

We also need a notion of computable real processes, or in other words, computable real-valued maps $\process\colon\sits\to\reals$ defined on the set $\sits$ of all situations.
Because there is an obvious computable bijection between $\naturalswithzero$ and $\sits$, whose inverse is also computable, we can in fact identify real processes and real sequences, and simply import, {\itshape mutatis mutandis}, the definitions for computable real sequences common in the literature \citep[Chapter~0]{li1993}.
Indeed, we call a net of rational numbers $r_{s,n}$ \emph{computable} if there are three computable functions $a,b,s$ from $\sits\times\naturalswithzero$ to $\naturalswithzero$ such that
\begin{equation*}
b(s,n)>0
\text{ and }
r_{s,n}=(-1)^{\sigma(s,n)}\frac{a(s,n)}{b(s,n)}
\text{ for all $s\in\sits$ and $n\in\naturalswithzero$}.
\end{equation*}
We call a real process $\process\colon\sits\to\reals$ \emph{computable} if there is a computable net of rational numbers $r_{s,n}$ and a computable function $e\colon\sits\times\naturalswithzero\to\naturalswithzero$ such that
\begin{equation*}
n\geq e(s,N)\then\abs{r_{s,n}-\process(s)}\leq2^{-N}
\text{ for all $s\in\sits$ and $n,N\in\naturalswithzero$}. 
\end{equation*} 
Again, there is no problem with the notions `computable net of rational numbers' or `computable function' in this definition, because we can identify $\sits\times\naturalswithzero$ with $\naturalswithzero$ through a computable bijection whose inverse is also computable.
Obviously, it follows from this definition that in particular $\process(t)$ is a computable real number for any $t\in\sits$: fix $s=t$ and consider the sequence $r_{t,n}$ that converges to $\process(s)$ as $n\to+\infty$.
Also, a constant real process is computable if and only if its constant value is.

We recall the following standard results \citep[Chapter~0]{li1993}.

\begin{proposition}\label{prop:computable:simplified}
A real process $\process$ is computable if and only if there is a computable net of rational numbers $r_{s,n}$ such that $\abs{r_{s,n}-\process(s)}\leq2^{-n}$ for all $s\in\sits$ and $n\in\naturalswithzero$.
\end{proposition}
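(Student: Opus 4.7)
The plan is to prove the two implications separately, with the nontrivial content lying in the forward direction. For the ``if'' direction, suppose we have a computable net of rationals $r_{s,n}$ with $\abs{r_{s,n}-\process(s)}\leq2^{-n}$ for all $s\in\sits$ and $n\in\naturalswithzero$. Then setting $e(s,N)\coloneqq N$ yields a computable function $e\colon\sits\times\naturalswithzero\to\naturalswithzero$ witnessing computability of $\process$ in the sense of the main definition, since $n\geq e(s,N)=N$ implies $\abs{r_{s,n}-\process(s)}\leq2^{-n}\leq2^{-N}$.

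For the ``only if'' direction, suppose $\process$ is computable, so by definition there exist a computable net of rationals $r_{s,n}$ together with a computable modulus $e\colon\sits\times\naturalswithzero\to\naturalswithzero$ such that $n\geq e(s,N)\then\abs{r_{s,n}-\process(s)}\leq2^{-N}$ for all $s\in\sits$ and $n,N\in\naturalswithzero$. The key idea is to ``speed up'' the net by composing with its own modulus. Define a new net of rationals by
\begin{equation*}
r'_{s,n}\coloneqq r_{s,e(s,n)}
\text{ for all $s\in\sits$ and $n\in\naturalswithzero$.}
\end{equation*}
Since $e(s,n)\geq e(s,n)$, the defining property of $e$ immediately gives $\abs{r'_{s,n}-\process(s)}\leq2^{-n}$ for all $s\in\sits$ and $n\in\naturalswithzero$, which is the required bound.

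What remains is to check that $r'_{s,n}$ is itself a computable net of rationals in the sense of the definition recalled just before the proposition. For this I would use the computable functions $a,b,\sigma\colon\sits\times\naturalswithzero\to\naturalswithzero$ witnessing computability of $r_{s,n}$, and the computability of $e$, to define $a'(s,n)\coloneqq a(s,e(s,n))$, $b'(s,n)\coloneqq b(s,e(s,n))$ and $\sigma'(s,n)\coloneqq\sigma(s,e(s,n))$. Each of these is a composition of computable functions from $\sits\times\naturalswithzero$ to $\naturalswithzero$ (using the computable bijection between $\sits\times\naturalswithzero$ and $\naturalswithzero$), and therefore computable. One has $b'(s,n)>0$ and $r'_{s,n}=(-1)^{\sigma'(s,n)}\frac{a'(s,n)}{b'(s,n)}$ by construction, so $r'_{s,n}$ satisfies the definition of a computable net of rationals.

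I do not foresee a serious obstacle here: the whole argument is a standard ``diagonal collapse'' of the modulus into the net, analogous to the classical result for computable real sequences in \citet[Chapter~0]{li1993}. The only point that requires any care is the formal verification that the composed functions $a\circ(\mathrm{id},e)$, $b\circ(\mathrm{id},e)$ and $\sigma\circ(\mathrm{id},e)$ are computable in the sense of Turing machines on $\naturalswithzero$ after identifying $\sits\times\naturalswithzero$ with $\naturalswithzero$ via a computable bijection with computable inverse, which is exactly the identification invoked in the paper when defining computable nets on $\sits\times\naturalswithzero$.
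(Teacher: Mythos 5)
Your proposal is correct and follows essentially the same route as the paper's own proof: the ``if'' direction is immediate with $e(s,N)=N$, and the ``only if'' direction collapses the modulus into the net via $r'_{s,n}\coloneqq r_{s,e(s,n)}$, exactly as the paper does. Your additional verification that the composed witnesses $a'$, $b'$, $\sigma'$ remain computable is a harmless elaboration of what the paper leaves implicit.
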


\begin{proof}{\bf of Proposition~\ref{prop:computable:simplified}\quad}
We give the proof for the sake of completeness.
The `if' part is immediate, so we proceed to the `only if' part.
That $\process$ is computable means that there is some computable net of rational numbers $r'_{s,n}$ and a computable function $e\colon\sits\times\naturalswithzero\to\naturalswithzero$ such that $n\geq e(s,N)$ implies $\abs{r'_{s,n}-F(s)}\leq2^{-N}$ for all $s\in\sits$ and $N\in\naturalswithzero$.
The net of rational numbers $r_{s,n}\coloneqq r'_{s,e(s,n)}$ is computable because the function $e$ is computable, and satisfies $\abs{r_{s,n}-\process(s)}=\abs{r'_{s,e(s,n)}-\process(s)}\leq2^{-n}$.
\end{proof}\vspace{-20pt}

\begin{proposition}\label{prop:computable:closure}
Consider any computable net $x_{s,n}$ of real numbers and any real process $\process$ for which there is some computable function $e\colon\sits\times\naturalswithzero\to\naturalswithzero$ such that
\begin{equation*}
n\geq e(s,N)\then\abs{x_{s,n}-\process(s)}\leq2^{-N}
\text{ for all $s\in\sits$ and $N\in\naturalswithzero$}. 
\end{equation*} 
Then $\process$ is computable.
\end{proposition}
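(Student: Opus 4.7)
The plan is to reduce the statement to Proposition~\ref{prop:computable:simplified} by manufacturing a single computable net of rationals $r_{s,N}$ that approximates $\process(s)$ within $2^{-N}$, uniformly in $s$. The intuition is simple: the $x_{s,n}$ are already close to $\process(s)$ once $n\geq e(s,N)$, and since each $x_{s,n}$ is itself a computable real, we can approximate it by a rational to any desired accuracy; combining the two via a diagonal choice of indices yields the required rational net.

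First I would invoke Proposition~\ref{prop:computable:simplified}, applied to the computable net $x_{s,n}$ viewed as a computable real process on the index set $\sits\times\naturalswithzero$ (which is in computable bijection with $\sits$, through the standard identification already used in the excerpt). This produces a computable net of rationals $q_{s,n,m}$ such that
\begin{equation*}
\abs{q_{s,n,m}-x_{s,n}}\leq 2^{-m}
\quad\text{for all }s\in\sits\text{ and }n,m\in\naturalswithzero.
\end{equation*}

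Next I would set $r_{s,N}\coloneqq q_{s,\,e(s,N+1),\,N+1}$. Since $e$ and $q$ are both computable, $r_{s,N}$ is a computable net of rationals. A single triangle-inequality estimate then gives
\begin{equation*}
\abs{r_{s,N}-\process(s)}
\leq\abs{q_{s,e(s,N+1),N+1}-x_{s,e(s,N+1)}}+\abs{x_{s,e(s,N+1)}-\process(s)}
\leq 2^{-(N+1)}+2^{-(N+1)}=2^{-N},
\end{equation*}
where the first summand is bounded using the defining property of $q$ and the second using the hypothesis on $e$ (with $N$ replaced by $N+1$). Proposition~\ref{prop:computable:simplified} in its `if' direction then concludes that $\process$ is computable.

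The only subtlety, which is hardly an obstacle, is the bookkeeping needed to justify treating a `computable net of real numbers' indexed by $\sits\times\naturalswithzero$ as a computable real process to which Proposition~\ref{prop:computable:simplified} applies. This follows at once from the computable bijection between $\naturalswithzero$ and $\sits\times\naturalswithzero$ already invoked in the paper's definitions, so the entire argument is essentially a one-line diagonalisation.
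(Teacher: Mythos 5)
Your argument is correct. The paper itself states Proposition~\ref{prop:computable:closure} without proof (as a standard fact in the spirit of \citet[Chapter~0]{li1993}), and your reduction---approximate each $x_{s,n}$ by rationals via Proposition~\ref{prop:computable:simplified} applied over the index set $\sits\times\naturalswithzero$, then diagonalise with $r_{s,N}\coloneqq q_{s,\,e(s,N+1),\,N+1}$ and a triangle inequality splitting $2^{-N}$ as $2^{-(N+1)}+2^{-(N+1)}$---is exactly the standard argument the authors are implicitly relying on; the appeal to the computable bijection between $\naturalswithzero$ and $\sits\times\naturalswithzero$ is already sanctioned by the paper's own conventions.
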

\noindent
Also, if $\process$ and $\processtoo$ are computable real processes, then so are $\process+\processtoo$, $\process\processtoo$, $\process/\processtoo$ (provided that $\processtoo(s)\neq0$ for all $s\in\sits$), $\max\set{\process,\processtoo}$, $\min\set{\process,\processtoo}$, $\exp(\process)$, $\ln\process$ (provided that $\process(s)>0$ for all $s\in\sits$), and $F^{\frac{1}{m}}$ (provided that $\process(s)\geq0$ for all $s\in\sits$) for all $m\in\naturals$ \citep[Chapter~0]{li1993}.

We also require the notion of a {\scomp} real processes.
A real process $\process$ is \emph{\lscomp} if it can be approximated from below by a computable net of rational numbers, meaning that there is a computable net of rational numbers $r_{s,n}$ such that
\begin{enumerate}[label=\upshape(\roman*),leftmargin=*,noitemsep,topsep=0pt]
\item $r_{s,n+1}\geq r_{s,n}$ for all $s\in\sits$ and $n\in\naturalswithzero$;
\item $\process(s)=\lim_{n\to+\infty}r_{s,n}$ for all $s\in\sits$.
\end{enumerate}
$\process$ is \emph{\uscomp} if $-\process$ is {\lscomp}.
The following result is standard, but its proof is illustrative.

\begin{proposition}\label{prop:computable:upper:lower}
A process $\process$ is computable if and only is it is both lower and upper {\scomp}.
\end{proposition}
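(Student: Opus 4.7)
The plan is to prove the two implications separately, exploiting the characterisation from Proposition~\ref{prop:computable:simplified} in the forward direction and a dovetailing search in the converse.

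For the forward direction, suppose $\process$ is computable. By Proposition~\ref{prop:computable:simplified}, fix a computable net of rationals $r_{s,n}$ with $\abs{r_{s,n}-\process(s)}\leq2^{-n}$ for all $s\in\sits$ and $n\in\naturalswithzero$. Define
\begin{equation*}
\ell_{s,n}\coloneqq\max\cset{r_{s,k}-2^{-k}}{0\leq k\leq n}
\text{ and }
u_{s,n}\coloneqq\min\cset{r_{s,k}+2^{-k}}{0\leq k\leq n}.
\end{equation*}
Both are computable nets of rationals because max and min over a bounded computable range preserve computability. By construction, $\ell_{s,n}$ is non-decreasing and $u_{s,n}$ is non-increasing in $n$, and for every $k$ we have $r_{s,k}-2^{-k}\leq\process(s)\leq r_{s,k}+2^{-k}$, so $\ell_{s,n}\leq\process(s)\leq u_{s,n}$. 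Finally, since $r_{s,n}\pm2^{-n}\to\process(s)$, the monotone sequences $\ell_{s,n}$ and $u_{s,n}$ converge to $\process(s)$ as well. Hence $\process$ is lower semicomputable and $-\process$ is also lower semicomputable, so $\process$ is upper semicomputable.

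For the converse, suppose $\process$ is both lower and upper semicomputable, with computable approximating nets $\ell_{s,n}\nearrow\process(s)$ and $u_{s,n}\searrow\process(s)$. For each $s\in\sits$ and $N\in\naturalswithzero$, the sandwich $\ell_{s,n}\leq\process(s)\leq u_{s,m}$ together with both nets converging to $\process(s)$ ensures that there exist $n,m\in\naturalswithzero$ with $u_{s,m}-\ell_{s,n}\leq2^{-N}$. Define $e(s,N)$ as the least index $k\in\naturalswithzero$ (obtained by a standard dovetailed search through all pairs $(n,m)$ with $n,m\leq k$) such that $u_{s,k}-\ell_{s,k}\leq2^{-N}$; this $e$ is a total computable function because the search is guaranteed to terminate. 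Setting $x_{s,N}\coloneqq\ell_{s,e(s,N)}$ gives a computable net of rationals with $\abs{x_{s,N}-\process(s)}\leq u_{s,e(s,N)}-\ell_{s,e(s,N)}\leq2^{-N}$, whence $\process$ is computable by Proposition~\ref{prop:computable:simplified} (or directly by Proposition~\ref{prop:computable:closure}).

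The main technical obstacle is the convergence-rate issue in the converse: individually, lower and upper semicomputability give us no effective modulus of convergence, so we cannot simply read off a fast-converging sequence from either approximation alone. The key idea is that the two one-sided approximations \emph{together} provide such a modulus, via a dovetailed search that is guaranteed to terminate thanks to the common limit; this is precisely what allows us to manufacture the computable function $e$ required by the definition of computability.
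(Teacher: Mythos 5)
Your proof is correct, and its overall architecture matches the paper's: two separate implications, with the harder (``if'') direction resting on the observation that the gap between the upper and lower approximants supplies the missing modulus of convergence. In that direction your argument is essentially the paper's --- both define $e(s,N)$ as the first index at which the gap $u_{s,k}-\ell_{s,k}$ drops below $2^{-N}$, the search terminating because the two monotone nets share the limit $\process(s)$; you merely make explicit the search that the paper dismisses as ``obvious'', and you output $\ell_{s,e(s,N)}$ where the paper outputs the midpoint $(\ell_{s,e(s,N)}+u_{s,e(s,N)})/2$, which changes nothing. The ``only if'' direction is where you genuinely depart from the paper. The paper first renormalises to a net of rational \emph{lower} bounds with modulus $e(s,N)=N$ and then greedily extracts a non-decreasing subsequence, each greedy step being an unbounded search for the next index where the approximant does not decrease. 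You instead take the running maximum of the certified lower bounds $r_{s,k}-2^{-k}$ (and the running minimum of $r_{s,k}+2^{-k}$). This buys something real: your nets are monotone by construction and computable with a bounded amount of work per index, so there is no inner search whose termination needs justifying, whereas the paper's greedy extraction implicitly relies on the existence of a later index with a value at least as large --- a point that requires a small extra argument in the edge case where some approximant already equals $\process(s)$. Both routes reach the same conclusion; yours is the cleaner of the two for this half.
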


\begin{proof}{\bf of Proposition~\ref{prop:computable:upper:lower}\quad}
We begin with the `if' part.
Assume that $\process$ is both lower and upper {\scomp}.
This implies that there are two computable nets of rational numbers $\underline{r}_{s,n}$ and $\overline{r}_{s,n}$ such that $\underline{r}_{t,n}\uparrow\process(t)$ and $\overline{r}_{t,n}\downarrow\process(t)$ for any fixed $t\in\sits$.
Consider the computable nets of rational numbers defined by $\delta_{s,n}\coloneqq\overline{r}_{s,n}-\underline{r}_{s,n}\geq0$ and $r_{s,n}\coloneqq\group{\underline{r}_{s,n}+\overline{r}_{s,n}}/2$.
For any fixed $t\in\sits$, the sequence $\delta_{t,n}\downarrow0$, which implies that for any $N\in\naturalswithzero$ there is a natural number $e(t,N)$ such that $\delta_{t,n}\leq2^{-N}$ for all $n\geq e(t,N)$.
It is obvious that the function $e\colon\sits\times\naturalswithzero\to\naturalswithzero$ is computable, and that $n\geq e(s,N)$ also implies $\abs{\process(s)-r_{s,n}}\leq\abs{\overline{r}_{s,n}-\underline{r}_{s,n}}=\delta_{s,n}\leq2^{-N}$, for all $s\in\sits$ and $n,N\in\naturalswithzero$.
Hence, $\process$ is also computable.

We continue with the `only if' part.
Assume that $\process$ is computable, so there is a computable net of rational numbers $r_{s,n}$ and a computable function $e\colon\sits\times\naturalswithzero\to\naturalswithzero$ such that
$n\geq e(s,N)$ implies $\abs{r_{s,n}-\process(s)}\leq2^{-N}$ for all $s\in\sits$ and $n,N\in\naturalswithzero$. 
We prove that $\process$ is {\lscomp}; the proof that $\process$ is {\uscomp} is completely similar.
Consider the computable net of rational numbers $r'_{s,n}\coloneqq r_{s,e(s,n+1)}-2^{-\group{n+1}}$, then clearly $r'_{s,n}\leq\process(s)$ and $\abs{r'_{s,n}-\process(s)}<2^{-n}$ for all $s\in\sits$ and $n\in\naturalswithzero$.
This implies that we can always assume without loss of generality from the outset that $r_{s,n}\leq\process(s)$ and $e(s,N)=N$ [a similar idea was used in the proof of Proposition~\ref{prop:computable:simplified}].
We now construct a new computable net of rational numbers $\underline{r}_{s,n}$ from our original net: for any fixed $s\in\sits$, start with $\underline{r}_{s,0}\coloneqq r_{s,0}$ and $\underline{e}(s,0)=0$; let $\underline{e}(s,1)$ be the first $k>\underline{e}(s,0)$ such that $r_{s,k}\geq\underline{r}_{s,0}$, and let $\underline{r}_{s,1}\coloneqq r_{s,\underline{e}(s,1)}$; let $\underline{e}(s,2)$ be the first $k>\underline{e}(s,1)$ such that $r_{s,k}\geq\underline{r}_{s,1}$, and let $\underline{r}_{s,2}\coloneqq r_{s,\underline{e}(s,2)}$; and so on.
The function $\underline{e}\colon\sits\times\naturalswithzero\to\naturalswithzero$ is clearly computable, and therefore the net of rational numbers $\underline{r}_{s,n}$ is computable.
Moreover, for any fixed $t\in\sits$ the subsequence $\underline{r}_{t,n}$ of the sequence $r_{t,n}$ is non-decreasing by construction, and it converges to $\process(t)$ because the original sequence $r_{t,n}$ does.
\end{proof}\vspace{-10pt}

The following definitions and results are obvious and immediate. 
A gamble $f$ on $\outcomes$ is called \emph{computable} if and only if both its values $f(0)$ and $f(1)$ are computable real numbers.
An interval forecast $I=\pinterval\in\imprecisefrcsts$ is called \emph{computable} if and only if both its lower bound $\lp$ and upper bound $\up$ are computable real numbers.
A forecasting system $\frcstsystem$ is called \emph{computable} if the associated real processes $\lfrcstsystem$ and $\ufrcstsystem$ are.
Finally, a process difference $\adddelta\process$ is called (lower/upper semi)\emph{computable} if the real processes $\adddelta(s)(0)$ and $\adddelta(s)(1)$, $s\in\sits$ are; and similarly for a multiplier process $\multprocess$.

\begin{proposition}\label{prop:computable:expectation:processes}
For any computable gamble $f$ on $\outcomes$ and any computable forecasting system $\frcstsystem$, the real processes $\lex_{\frcstsystem(s)}(f)$ and $\uex_{\frcstsystem(s)}(f)$, $s\in\sits$ are computable.
\end{proposition}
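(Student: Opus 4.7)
The plan is to reduce the claim to the closure properties of computable real processes under the arithmetic operations, $\min$, and $\max$, which were listed just after Proposition~\ref{prop:computable:upper:lower}. The key observation is that since $\ex_p(f) = p f(1) + (1-p) f(0)$ is linear (hence monotone) in $p$, the minimum and maximum of $\ex_p(f)$ over $p \in [\lp,\up]$ are attained at the endpoints. In particular, starting from~\eqref{eq:local:lower} and~\eqref{eq:local:upper}, we have the uniform identities
\begin{equation*}
\lex_I(f) = \min\set[\big]{\ex_{\lp}(f),\ex_{\up}(f)}
\quad\text{and}\quad
\uex_I(f) = \max\set[\big]{\ex_{\lp}(f),\ex_{\up}(f)},
\end{equation*}
valid for every $I = \pinterval \in \imprecisefrcsts$ and every gamble $f$ on $\outcomes$.

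First, I would observe that since $f$ is computable, the real numbers $f(0)$ and $f(1)$ are computable, so the constant real processes $s \mapsto f(0)$ and $s \mapsto f(1)$ are computable (a constant real process is computable iff its value is). Since $\frcstsystem$ is computable by assumption, $\lfrcstsystem$ and $\ufrcstsystem$ are computable real processes. By the closure of computable real processes under addition, multiplication, and subtraction from a constant, the two real processes
\begin{equation*}
s \mapsto \ex_{\lfrcstsystem(s)}(f) = \lfrcstsystem(s) f(1) + (1-\lfrcstsystem(s)) f(0)
\quad\text{and}\quad
s \mapsto \ex_{\ufrcstsystem(s)}(f) = \ufrcstsystem(s) f(1) + (1-\ufrcstsystem(s)) f(0)
\end{equation*}
are computable.

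Next, applying the displayed identities pointwise with $I = \frcstsystem(s)$, $\lp = \lfrcstsystem(s)$, $\up = \ufrcstsystem(s)$, we get
\begin{equation*}
\lex_{\frcstsystem(s)}(f) = \min\set[\big]{\ex_{\lfrcstsystem(s)}(f),\ex_{\ufrcstsystem(s)}(f)}
\quad\text{and}\quad
\uex_{\frcstsystem(s)}(f) = \max\set[\big]{\ex_{\lfrcstsystem(s)}(f),\ex_{\ufrcstsystem(s)}(f)}
\end{equation*}
for every $s \in \sits$. Since computable real processes are closed under $\min$ and $\max$, both of these processes are computable, which is exactly what we needed to prove.

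There is really no conceptual obstacle here: the whole argument is a matter of assembling the computable real processes $\lfrcstsystem$, $\ufrcstsystem$, and the constants $f(0)$, $f(1)$ using the standard closure properties. The only mildly delicate point is the reduction from $\min_{p\in I}\ex_p(f)$ to a minimum over the two endpoints, but this is immediate from the linearity of $p \mapsto \ex_p(f)$ and is already incorporated into the case split in~\eqref{eq:local:lower} and~\eqref{eq:local:upper}.
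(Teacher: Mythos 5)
Your proof is correct and matches the paper's own argument: the paper likewise notes that the endpoint processes $\min\frcstsystem(s)f(1)+\sqgroup{1-\min\frcstsystem(s)}f(0)$ and $\max\frcstsystem(s)f(1)+\sqgroup{1-\max\frcstsystem(s)}f(0)$ are computable and then takes their minimum and maximum. The only difference is that you spell out the reduction of $\min_{p\in I}\ex_p(f)$ to the two endpoints explicitly, which the paper leaves implicit.
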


\begin{proof}{\bf of Proposition~\ref{prop:computable:expectation:processes}\quad}
Observe that both the real process $\min\frcstsystem(s)f(1)+\sqgroup{1-\min\frcstsystem(s)}f(0)$, $s\in\sits$ and the real process $\max\frcstsystem(s)f(1)+\sqgroup{1-\max\frcstsystem(s)}f(0)$, $s\in\sits$ are computable. 
So are, therefore, their maximum process $\uex_{\frcstsystem(s)}(f)$, $s\in\sits$ and their minimum process $\lex_{\frcstsystem(s)}(f)$, $s\in\sits$.
\end{proof}\vspace{-20pt}

\begin{proposition}\label{prop:IcompIffGammaComp}
For any $I\in\imprecisefrcsts$, the stationary forecasting system $\frcstsystem_I$ is computable if and only if the interval $I$ is computable. 
\end{proposition}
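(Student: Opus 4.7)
The plan is to unwind definitions and invoke a standard observation that has already been recorded in the excerpt, namely that a constant real process is computable if and only if its constant value is a computable real number.

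First I would recall that by definition the stationary forecasting system $\frcstsystem_I$ sends every situation $s \in \sits$ to the same interval $I = \pinterval$, so the associated real processes $\lfrcstsystem_I$ and $\ufrcstsystem_I$ are simply the constant processes $\lfrcstsystem_I(s) = \lp$ and $\ufrcstsystem_I(s) = \up$ for all $s \in \sits$. Second, $\frcstsystem_I$ is computable precisely when both of these real processes are computable, by the definition of a computable forecasting system. Third, the computability of a constant real process reduces to the computability of its constant value: if $\lp$ is a computable real, witnessed by a computable sequence $r_n$ of rationals converging effectively to $\lp$ with computable modulus $e\colon\naturalswithzero\to\naturalswithzero$, then the net $r_{s,n} \coloneqq r_n$ is a computable net of rationals on $\sits\times\naturalswithzero$ and $e'(s,N)\coloneqq e(N)$ is a computable modulus witnessing computability of $\lfrcstsystem_I$; conversely, fixing any $s=t$ gives a computable sequence of rationals converging effectively to $\lp = \lfrcstsystem_I(t)$. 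The same applies to $\up$ and $\ufrcstsystem_I$.

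Combining these observations, $\frcstsystem_I$ is computable iff $\lfrcstsystem_I$ and $\ufrcstsystem_I$ are computable iff $\lp$ and $\up$ are computable real numbers iff $I$ is a computable interval forecast, which is exactly what we wanted. There is essentially no obstacle here: the only subtlety is to make sure the bijection between $\sits\times\naturalswithzero$ and $\naturalswithzero$ (mentioned in the discussion of computable nets) lets us turn a computable sequence of rationals into a computable net with dummy dependence on $s$, and vice versa fix $s$ to recover a sequence from a net. This is exactly the remark ``a constant real process is computable if and only if its constant value is'' already stated in Section~\ref{sec:computability}, so the proof is a two-line invocation of that fact applied separately to the lower and upper bounds of $I$.
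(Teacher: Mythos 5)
Your proof is correct and follows exactly the paper's own route: the paper's proof is the one-line observation that the constant processes $\lfrcstsystem_I$ and $\ufrcstsystem_I$ are computable if and only if their constant values $\lp$ and $\up$ are, which is precisely the fact you invoke (and helpfully spell out). No gaps.
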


\begin{proof}{\bf of Proposition~\ref{prop:IcompIffGammaComp}\quad}
Let $\lp\coloneqq\min I$ and $\up\coloneqq\max I$.
Obviously, the constant real processes $\lfrcstsystem_I(s)\coloneqq\lp$ and $\ufrcstsystem_I(s)\coloneqq\up$ are computable if and only if their constant values $\lp$ and $\up$ are.
\end{proof}\vspace{-20pt}

\begin{proposition}\label{prop:computable:expectations}
For any computable gamble $f$ on $\outcomes$ and any computable interval forecast $I=\pinterval\in\imprecisefrcsts$, the lower and upper expectations $\lex_I(f)$ and $\uex_I(f)$ are computable real numbers.
\end{proposition}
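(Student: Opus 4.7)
\begin{proof}{\bf of Proposition~\ref{prop:computable:expectations}\quad}
The plan is to reduce this statement to a direct combination of the two immediately preceding propositions, rather than redo the computations by hand on the closed-form expressions in Equations~\eqref{eq:local:lower} and~\eqref{eq:local:upper}.

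Since $I\in\imprecisefrcsts$ is computable, Proposition~\ref{prop:IcompIffGammaComp} guarantees that the stationary forecasting system $\frcstsystem_I$, defined by $\frcstsystem_I(s)\coloneqq I$ for every $s\in\sits$, is computable. Because $f$ is a computable gamble on $\outcomes$ and $\frcstsystem_I$ is a computable forecasting system, Proposition~\ref{prop:computable:expectation:processes} then implies that the real processes $s\mapsto\lex_{\frcstsystem_I(s)}(f)=\lex_I(f)$ and $s\mapsto\uex_{\frcstsystem_I(s)}(f)=\uex_I(f)$ are computable. These two real processes are constant, so by the observation in Section~\ref{sec:computability} that a constant real process is computable if and only if its constant value is, we conclude that the real numbers $\lex_I(f)$ and $\uex_I(f)$ are computable.
\end{proof}

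The only potential obstacle would be if the two cited propositions did not cover the statement directly, but they do: Proposition~\ref{prop:IcompIffGammaComp} bridges the computability of an interval and the computability of its associated stationary forecasting system, and Proposition~\ref{prop:computable:expectation:processes} bridges the computability of a forecasting system and that of the induced lower/upper expectation processes. Composing these two bridges yields the result with no extra work. Alternatively, one could give a hands-on proof using Equations~\eqref{eq:local:lower} and~\eqref{eq:local:upper} together with the closure of computable reals under sums, products and the operations $\min,\max$, but this is precisely what Proposition~\ref{prop:computable:expectation:processes} already packages for us, so there is nothing further to verify.
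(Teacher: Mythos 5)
Your proof is correct and follows exactly the paper's own route: the paper also derives this as an immediate consequence of Propositions~\ref{prop:computable:expectation:processes} and~\ref{prop:IcompIffGammaComp}, and even mentions the same hands-on alternative via the computability of $\lp f(1)+(1-\lp)f(0)$ and $\up f(1)+(1-\up)f(0)$. Your added detail about constant processes having computable constant values correctly fills in the small step the paper leaves implicit.
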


\begin{proof}{\bf of Proposition~\ref{prop:computable:expectations}\quad}
This is an immediate consequence of Propositions~\ref{prop:computable:expectation:processes} and~\ref{prop:IcompIffGammaComp}.
Or, alternatively, observe that both real numbers $\lp f(1)+(1-\lp)f(0)$ and $\up f(1)+(1-\up)f(0)$ are computable. 
So are, therefore, their maximum $\uex_I(f)$ and minimum $\lex_I(f)$.
\end{proof}\vspace{-10pt}

We will also need to use the following basic results.

\begin{proposition}\label{prop:computable:from:delta}
Consider any real process $\process$, and its process difference $\adddelta\process$.
Then the following statements hold:
\begin{enumerate}[label=\upshape(\roman*),leftmargin=*,noitemsep,topsep=0pt]
\item if $\process(\init)$ and $\adddelta\process$ are {\lscomp} then so is $\process$;
\item if $\process(\init)$ and $\adddelta\process$ are {\uscomp} then so is $\process$;
\item $\process$ is computable if and only if $\process(\init)$ and $\adddelta\process$ are.
\end{enumerate}
\end{proposition}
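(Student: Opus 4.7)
The plan is to exploit the telescoping identity
\[
\process(\xvalto{n})=\process(\init)+\sum_{k=0}^{n-1}\adddelta\process(\xvalto{k})(\xval{k+1})
\qquad\text{for all }(\xvalto{n})\in\sits,
\]
which reduces the problem for $\process$ to statements about $\process(\init)$ (a single real) and about the two real processes $\adddelta\process(\cdot)(0)$ and $\adddelta\process(\cdot)(1)$, whose (semi)computability is the content of the hypotheses.

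For part (i), I would fix a non-decreasing computable sequence of rationals $q_m\uparrow\process(\init)$ witnessing lower semicomputability of $\process(\init)$, together with non-decreasing computable nets of rationals $\underline{r}^{0}_{s,m}\uparrow\adddelta\process(s)(0)$ and $\underline{r}^{1}_{s,m}\uparrow\adddelta\process(s)(1)$ witnessing lower semicomputability of $\adddelta\process$, and then assemble the net
\[
\underline{r}_{s,m}\coloneqq q_m+\sum_{k=0}^{n-1}\underline{r}^{\xval{k+1}}_{(\xvalto{k}),\,m}
\qquad\text{for }s=(\xvalto{n}).
\]
This net is non-decreasing in $m$ for every fixed $s$, and its limit equals $\process(s)$ by the telescoping identity. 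It is computable as a net of rationals because the length of $s$, the parsing of $s$ into its prefixes, and the summation of a computably-bounded number of rational terms are all computable uniformly in $s$ (using the computable bijection between $\sits$ and $\naturalswithzero$ mentioned in Section~\ref{sec:computability}). This delivers lower semicomputability of $\process$. Part (ii) is then immediate by applying (i) to $-\process$, since $\adddelta(-\process)=-\adddelta\process$ and upper semicomputability of a real process is by definition lower semicomputability of its negative.

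For part (iii), the reverse direction follows at once by combining (i) and (ii) with Proposition~\ref{prop:computable:upper:lower}: if $\process(\init)$ and $\adddelta\process$ are computable, they are both lower and upper semicomputable, hence so is $\process$, hence $\process$ is computable. For the forward direction, assume $\process$ is computable. Then $\process(\init)$ is a computable real number, as already noted after the definition of computable real process in Section~\ref{sec:computability}. To see that $\adddelta\process$ is computable, observe that the \emph{shifted} real processes $\processtoo_0(s)\coloneqq\process(s0)$ and $\processtoo_1(s)\coloneqq\process(s1)$ are computable: if $r_{s,m}$ is a computable net of rationals witnessing computability of $\process$ together with some modulus $e$, then the nets $r_{s0,m}$ and $r_{s1,m}$ are computable because the maps $s\mapsto s0$ and $s\mapsto s1$ are computable bijections of $\sits$ into itself, and they inherit the modulus $e(s0,\cdot)$ and $e(s1,\cdot)$. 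Then by the closure of computable real processes under subtraction (recalled just before Proposition~\ref{prop:computable:upper:lower}), the processes $\adddelta\process(s)(0)=\processtoo_0(s)-\process(s)$ and $\adddelta\process(s)(1)=\processtoo_1(s)-\process(s)$ are computable, which is exactly what it means for $\adddelta\process$ to be computable.

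The main obstacle is the bookkeeping in part (i): one must verify that the construction of $\underline{r}_{s,m}$ really is a \emph{computable} net of rationals in the strict sense of Section~\ref{sec:computability}, i.e.\ that there exist computable functions $a,b,\sigma\colon\sits\times\naturalswithzero\to\naturalswithzero$ representing it. This boils down to checking that (a) the length $n=\abs{s}$ and the outcomes $\xval{k+1}$ are extractable from $s$ by a Turing machine, and (b) the sum of finitely many rationals whose numerators, denominators and signs are delivered by Turing machines can itself be produced by a Turing machine; both are standard, so no genuine difficulty should arise.
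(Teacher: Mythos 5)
Your proof is correct, and its core is the same as the paper's: the telescoping identity $\process(\xvalto{n})=\process(\init)+\sum_{k=0}^{n-1}\adddelta\process(\xvalto{k})(\xval{k+1})$ for the synthesis direction, and differences of shifted processes (the paper sets $r^x_{s,n}\coloneqq r'_{sx,n+1}-r'_{s,n+1}$, which is your $\processtoo_x(s)-\process(s)$ in net form) for the analysis direction. The one place where you genuinely diverge is the ``if'' half of (iii): the paper proves it directly, summing the rational approximants and controlling the accumulated error by $(m+1)2^{-n}$ with the explicit modulus $e(s,N)\coloneqq N+m$, whereas you obtain it for free from (i), (ii) and Proposition~\ref{prop:computable:upper:lower}, since monotone approximation needs no modulus and the length-dependent error accumulation disappears. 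Your route is more economical given that (i) and (ii) must be proved anyway (the paper, conversely, only sketches (i) and (ii) as ``similar but simpler''); the paper's route yields explicit error control without invoking the equivalence between computability and two-sided semicomputability. Two cosmetic points: $s\mapsto s0$ and $s\mapsto s1$ are computable injections of $\sits$ into itself, not bijections; and ``lower semicomputable'' for the single real $\process(\init)$ should be read via the identification with a constant process, since the paper only defines semicomputability for processes. Neither affects the argument.
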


\begin{proof}{\bf of Proposition~\ref{prop:computable:from:delta}\quad}
We only prove the third statement.
The proof for the first and second statements are similar to the proof of the `if' part of the third, but simpler.

There are a number of ways to prove the third statement, but we will use Proposition~\ref{prop:computable:simplified}.

For the `if' part, we assume that $\process(\init)$ and $\adddelta\process$ are computable.
This implies that there are a computable sequence of rational numbers $r_{\init,n}$ and two computable nets of rational numbers $r_{s,n}^x$ such that $\abs{\process(\init)-r_{\init,n}}\leq2^{-n}$ and $\abs{\adddelta\process(s)(x)-r_{s,n}^x}\leq2^{-n}$ for all $s\in\sits$, $n\in\naturalswithzero$ and $x\in\outcomes$.
We now define the computable net of rational numbers $r_{s,n}$ as follows: for any $s=(\xvalto{m})\in\sits$, where $m\in\naturalswithzero$, and any $n\in\naturalswithzero$, let
\begin{equation*}
r_{s,n}\coloneqq r_{\init,n}+\sum_{k=1}^mr_{(\xvalto{k-1}),n}^{x_k}.
\end{equation*}
Then, since also
\begin{equation*}
\process(s)=\process(\init)+\sum_{k=1}^m\adddelta\process(\xvalto{k-1})(x_k),
\end{equation*}
we see that
\begin{equation*}
\abs{\process(s)-r_{s,n}}
\leq\abs{\process(\init)-r_{\init,n}}
+\sum_{k=1}^m\abs[\big]{\adddelta\process(\xvalto{k-1})(x_k)-r_{(\xvalto{k-1}),n}^{x_k}}
\leq(m+1)2^{-n},
\end{equation*}
so if we define the (clearly) computable function $e$ by
\begin{equation*}
e(s,N)\coloneqq N+m\geq N+\log_2(m+1),
\end{equation*}
then $n\geq e(s,N)$ implies that $\abs{\process(s)-r_{s,n}}\leq2^{-N}$ for all $s\in\sits$ and $n\in\naturalswithzero$.
Hence, $\process$ is computable.

For the `only if' part, assume that $\process$ is computable.
Then definitely in particular also its value $\process(\init)$ in the initial situation $\init$, so it only remains to prove that the process difference $\adddelta\process$ is computable.
Consider, to this effect, any $x\in\outcomes$.
It follows from the computability of $\process$ that there is a computable net of rational numbers $r'_{s,n}$ such that $\abs{\process(s)-r'_{s,n}}\leq2^{-n}$ and $\abs{\process(sx)-r'_{sx,n}}\leq2^{-n}$ and therefore also 
\begin{equation*}
r'_{sx,n}-r'_{s,n}-2^{-\group{n-1}}
\leq\process(sx)-\process(s)
\leq r'_{sx,n}-r'_{s,n}+2^{-\group{n-1}}
\text{ for all $s\in\sits$ and $n\in\naturalswithzero$.}
\end{equation*}
If we now let $r_{s,n}^x\coloneqq r'_{sx,n+1}-r'_{s,n+1}$, then this defines a computable net of rational numbers $r_{s,n}^x$ that satisfies $\abs{\adddelta\process(s)(x)-r_{s,n}^x}\leq2^{-n}$ for all $s\in\sits$ and all $n\in\naturalswithzero$.
This means that $\adddelta\process$ is computable.
\end{proof}\vspace{-20pt}

\begin{proposition}\label{prop:computable:from:delta:productorsum}
Consider any computable real processes $\processtoo$, $\processtoo'$ and $\processalso$.
Consider the real process $\process$ with computable $\process(\init)$, such that $\adddelta\process(s)=\indsing{1}\processtoo(s)+\indsing{0}\processtoo'(s)+\processalso(s)$ or $\adddelta\process(s)=\processtoo(s)\adddelta\processalso(s)$.
Then $\process$ is computable as well.
\end{proposition}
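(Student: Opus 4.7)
The plan is to reduce the statement to the closure properties of computable real processes under sums and products, combined with Proposition~\ref{prop:computable:from:delta}. By that proposition, since $\process(\init)$ is computable by assumption, it suffices to show that the process difference $\adddelta\process$ is computable, which by definition means that the two real processes $\adddelta\process(\cdot)(0)$ and $\adddelta\process(\cdot)(1)$ are computable.

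First, I would treat the additive case $\adddelta\process(s)=\indsing{1}\processtoo(s)+\indsing{0}\processtoo'(s)+\processalso(s)$. Evaluating at $x\in\outcomes$ gives $\adddelta\process(s)(1)=\processtoo(s)+\processalso(s)$ and $\adddelta\process(s)(0)=\processtoo'(s)+\processalso(s)$. Since $\processtoo$, $\processtoo'$ and $\processalso$ are computable real processes by hypothesis, the closure of computable real processes under addition (stated after Proposition~\ref{prop:computable:closure}) immediately yields that both $\adddelta\process(\cdot)(0)$ and $\adddelta\process(\cdot)(1)$ are computable, so $\adddelta\process$ is computable.

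Next, I would handle the multiplicative case $\adddelta\process(s)=\processtoo(s)\adddelta\processalso(s)$, where for each $x\in\outcomes$ we have $\adddelta\process(s)(x)=\processtoo(s)\cdot\adddelta\processalso(s)(x)$. Here I would first invoke the `only if' direction of Proposition~\ref{prop:computable:from:delta} applied to $\processalso$: since $\processalso$ is computable, so is its process difference $\adddelta\processalso$, i.e.\ the two real processes $\adddelta\processalso(\cdot)(0)$ and $\adddelta\processalso(\cdot)(1)$ are computable. Multiplying the computable real process $\processtoo$ with each of these and again using the closure of computable real processes under products yields that $\adddelta\process(\cdot)(0)$ and $\adddelta\process(\cdot)(1)$ are computable, hence $\adddelta\process$ is computable.

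In both cases, computability of $\process(\init)$ together with computability of $\adddelta\process$ delivers the conclusion via the `if' part of Proposition~\ref{prop:computable:from:delta}. The argument is essentially bookkeeping; the only subtlety I expect is keeping the quantification over $s\in\sits$ explicit when invoking the sum/product closure of computable real processes (which is uniform in $s$), and carefully distinguishing the gamble $\adddelta\process(s)$ from its pointwise values $\adddelta\process(s)(0)$ and $\adddelta\process(s)(1)$ when matching the definition of a computable process difference.
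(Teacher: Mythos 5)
Your proof is correct and follows essentially the same route as the paper, which simply observes that the hypotheses make $\adddelta\process$ computable and then invokes Proposition~\ref{prop:computable:from:delta}; you merely spell out the closure-under-sums-and-products details and the use of the `only if' direction for $\adddelta\processalso$ that the paper leaves implicit.
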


\begin{proof}{\bf of Proposition~\ref{prop:computable:from:delta:productorsum}\quad}
This is an immediate consequence of Proposition~\ref{prop:computable:from:delta}, since the conditions imply that $\adddelta\process$ is computable.
\end{proof}\vspace{-20pt}

\begin{proposition}\label{prop:computable:from:multiplier}
Consider any multiplier process $\multprocess$, and the associated real process $\mint$.
Then the following implications hold:
\begin{enumerate}[label=\upshape(\roman*),leftmargin=*,noitemsep,topsep=0pt]
\item if $\multprocess$ is {\lscomp}, then so are $\adddelta\mint$ and $\mint$;
\item if $\multprocess$ is {\uscomp}, then so are $\adddelta\mint$ and $\mint$;
\item if $\multprocess$ is {\comp}, then so are $\adddelta\mint$ and $\mint$.
\end{enumerate}
\end{proposition}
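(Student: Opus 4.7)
The plan is to exploit the explicit product representation of $\mint$ obtained by unfolding the recursion $\mint(sx)=\mint(s)\multprocess(s)(x)$: for every $s=(\xvalto{m})\in\sits$, one has $\mint(s)=\prod_{k=1}^{m}\multprocess(\xvalto{k-1})(x_k)$. Every claim about $\mint$ is then really a statement about uniform-in-$s$ semicomputability of a finite product of values of $\multprocess$, and the corresponding claim about $\adddelta\mint$ will follow either from Proposition~\ref{prop:computable:from:delta} or from the identity $\adddelta\mint(s)=\mint(s)[\multprocess(s)-1]$.

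For part (iii), the computability of $\multprocess$ gives a computable net $r_{s,n}^{x}$ of rationals with $\abs{r_{s,n}^{x}-\multprocess(s)(x)}\leq2^{-n}$ by (our analogue of) Proposition~\ref{prop:computable:simplified}. First I derive a uniform computable upper bound on $\mint(s)$: since $\multprocess(\xvalto{k-1})(x_k)\leq r_{(\xvalto{k-1}),0}^{x_k}+1$, the quantity $B(s)\coloneqq\prod_{k=1}^{m}\group{r_{(\xvalto{k-1}),0}^{x_k}+1}$ dominates $\mint(s)$ and is computable from $s$. Standard multiplicative error propagation then gives that if every factor is approximated within $\epsilon$ while every factor and its approximant are bounded by some $B\geq1$, the product error is at most $m\epsilon B^{m-1}$. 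Inverting this to solve for $\epsilon$ produces a computable function $e(s,N)$ such that the rational net $q_{s,n}\coloneqq\prod_{k=1}^{m}r_{(\xvalto{k-1}),e(s,n)}^{x_k}$ satisfies $\abs{q_{s,n}-\mint(s)}\leq2^{-n}$; hence $\mint$ is computable. Since $\mint(\init)=1$ is trivially computable, Proposition~\ref{prop:computable:from:delta}(iii) immediately gives that $\adddelta\mint$ is computable as well.

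For parts (i) and (ii), the semicomputability of $\mint$ is handled by essentially the same product construction but without any precision bookkeeping. If $\multprocess$ is lower semicomputable, I choose approximations $r_{s,n}^{x}$ that are non-decreasing in $n$, and truncate them at $0$ (which preserves computability, monotonicity and convergence since $\multprocess(s)(x)\geq0$). Then $q_{s,n}\coloneqq\prod_{k=1}^{m}r_{(\xvalto{k-1}),n}^{x_k}$ is a computable net of non-negative rationals, non-decreasing in $n$ by monotonicity of multiplication on $\posxreals$, and converging to $\mint(s)$ by elementary continuity of the product; so $\mint$ is lower semicomputable. The dual argument with non-increasing approximations gives part (ii) for $\mint$.

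The main obstacle is the semicomputability of $\adddelta\mint$ in parts (i) and (ii): writing $\adddelta\mint(s)(x)=\mint(sx)-\mint(s)$ casts it as a difference of two one-sidedly semicomputable processes, and such a difference is not in general one-sidedly semicomputable. The plan is to exploit the compatibility of the approximations built above, namely $q_{sx,n}=q_{s,n}\cdot r_{s,n}^{x}$, so that $q_{sx,n}-q_{s,n}=q_{s,n}\group{r_{s,n}^{x}-1}$ gives a computable net of rationals converging to $\adddelta\mint(s)(x)$. Establishing monotonicity of this net is the delicate step: a case split according to whether $\multprocess(s)(x)\geq1$, together with replacing the net by running suprema (for the lsc case) or running infima (for the usc case), should produce a monotone computable rational net with the same limit, which is exactly what is needed. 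Conclude (i) and (ii) for $\adddelta\mint$ from this construction, completing all three parts.
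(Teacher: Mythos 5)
Your treatment of statement (iii), and of the $\mint$ half of statements (i) and (ii), is sound. For (iii) you even take a slightly different route from the paper: the paper deduces (iii) from (i) and (ii) via Proposition~\ref{prop:computable:upper:lower}, whereas you prove computability of $\mint$ directly from the product representation with explicit error propagation (using the computable bound $B(s)$ and the telescoping estimate $m\epsilon B^{m-1}$) and then get $\adddelta\mint$ from Proposition~\ref{prop:computable:from:delta}; this is self-contained and correct. For the semicomputability of $\mint$ in (i) and (ii) your construction coincides with the paper's: the net $q_{s,n}\coloneqq\prod_{k=1}^{m}r^{x_k}_{(\xvalto{k-1}),n}$ of non-negative, monotone rational factors is monotone in $n$ and converges to the product of the limits.

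The genuine gap is exactly the step you flag as delicate: the semicomputability of $\adddelta\mint$ in (i) and (ii). The proposed repair does not go through. The case split on whether $\multprocess(s)(x)\geq1$ is not effective, and replacing $t_{s,n}\coloneqq q_{s,n}(r^x_{s,n}-1)$ by its running suprema yields a monotone net converging to $\sup_{n}t_{s,n}$, which can strictly exceed $\lim_{n}t_{s,n}=\adddelta\mint(s)(x)$: with $q_{s,n}=1-2^{-n}\uparrow1$ and $r^x_{s,n}\uparrow\nicefrac{1}{2}$ starting at $r^x_{s,1}=0.49$, one gets $t_{s,1}=-0.255$ while the limit is $-\nicefrac{1}{2}$, so the running supremum stabilises at the wrong value. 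More importantly, no monotonization can succeed, because this half of the statement fails as stated: take $\multprocess(\init)(0)=\multprocess(\init)(1)=c$ for a lower semicomputable but non-computable real $c>0$, and $\multprocess(s)\equiv0$ in every other situation. Then $\multprocess$ is lower semicomputable, $\mint(x_1)=c$, and $\adddelta\mint(x_1)(x_2)=-c$, which is lower semicomputable only if $c$ is upper semicomputable, hence (by Proposition~\ref{prop:computable:upper:lower}) computable---a contradiction. The obstruction is structural: a non-negative lower semicomputable quantity times a negative factor is upper, not lower, semicomputable. Note that the paper's own proof is open to the same objection, since it asserts $r_{s,n}[r^x_{s,n}-1]\uparrow\adddelta\mint(s)(x)$ without justification; only statement (iii), which you establish correctly, is actually used elsewhere in the paper.
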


\begin{proof}{\bf of Proposition~\ref{prop:computable:from:multiplier}\quad}
We only give the proof for the first statement. 
The proof for the second statement is completely similar, and the third statement then follows readily from the first and the second.

Assume that $\multprocess$ is {\lscomp}.
This implies that there are two computable nets of rational numbers $r_{s,n}^x$ such that $r_{s,n}^x\uparrow\multprocess(s)(x)$, for $x\in\outcomes$.
Since $\multprocess(s)(x)\geq0$, we may assume without loss of generality that $r_{s,n}^x\geq0$ too.
We now construct the computable net of rational numbers $r_{s,n}$ as follows: for any $s=(\xvalto{m})\in\sits$, where $m\in\naturalswithzero$, and any $n\in\naturalswithzero$, let
\begin{equation*}
r_{s,n}\coloneqq\prod_{k=0}^{m-1}r_{(\xvalto{k}),n}^{x_{k+1}}\geq0.
\end{equation*}
Then, since also
\begin{equation*}
\mint(s)=\prod_{k=0}^{m-1}\multprocess(\xvalto{k})(x_{k+1}),
\end{equation*}
we see that $r_{s,n}\uparrow\mint(s)$ for all $s\in\sits$, so $\mint$ is indeed {\lscomp}.
Next, we construct two computable nets of rational numbers $t_{s,n}^x$ as follows: for any $s\in\sits$ and $x\in\outcomes$, let
\begin{equation*}
t_{s,n}^x\coloneqq r_{s,n}[r_{s,n}^x-1].
\end{equation*}
Then, since also, by Equation~\eqref{eq:supermartingale:multiplier:differences},
\begin{equation*}
\adddelta\mint(s)(x)=\mint(s)[\multprocess(s)(x)-1]
\end{equation*}
and $r_{s,n}\geq0$, we see that $t_{s,n}^x\uparrow\adddelta\mint(s)(x)$ for all $s\in\sits$ and $x\in\outcomes$, so $\adddelta\mint$ is indeed {\lscomp}.
\end{proof}\vspace{-20pt}

\begin{proposition}\label{prop:computablemultiplier:from:delta}
Consider a multiplier process $\multprocess$ and the associated real process $\mint$. 
If $\mint$ is positive and computable, then so is $\multprocess$.
\end{proposition}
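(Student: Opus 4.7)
The plan is to invert the defining recursion $\mint(sx) = \mint(s)\multprocess(s)(x)$. Since $\mint$ is positive on all of $\sits$ by hypothesis, we may divide to obtain
\begin{equation*}
\multprocess(s)(x) = \frac{\mint(sx)}{\mint(s)}
\quad\text{for every $s\in\sits$ and $x\in\outcomes$.}
\end{equation*}
Computability of the multiplier process $\multprocess$ is by definition the joint computability of the two real processes $s\mapsto\multprocess(s)(0)$ and $s\mapsto\multprocess(s)(1)$, so it suffices to exhibit each of these as a quotient of two computable real processes whose denominator vanishes nowhere.

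To that end, for each fixed $x\in\outcomes$ I would introduce the shifted real process $\process_x\colon\sits\to\reals$ defined by $\process_x(s)\coloneqq\mint(sx)$. The concatenation map $s\mapsto sx$ is a computable map from $\sits$ into $\sits$, so from any computable net of rationals $r_{t,n}$ and computable modulus $e(t,N)$ witnessing the computability of $\mint$ one obtains, by trivial reindexing, a computable net $r'_{s,n}\coloneqq r_{sx,n}$ together with a computable modulus $e'(s,N)\coloneqq e(sx,N)$ that witness the computability of $\process_x$.

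Finally, I would invoke the closure of computable real processes under pointwise quotients by a nowhere-vanishing denominator, recalled in the paragraph following Proposition~\ref{prop:computable:closure}. Since $\mint$ is computable and positive on all of $\sits$, the ratio $\process_x/\mint$ is a computable real process for each $x\in\outcomes$, and this is by construction the real process $s\mapsto\multprocess(s)(x)$; hence $\multprocess$ is computable. There is no serious obstacle here: positivity of $\mint$ is precisely the hypothesis required to legitimise the appeal to quotient closure, and the remainder of the argument is routine bookkeeping with the definitions from Section~\ref{sec:computability}.
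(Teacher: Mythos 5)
Your proof is correct and follows essentially the same route as the paper's: both write $\multprocess(s)(x)=\mint(sx)/\mint(s)$ and invoke closure of computable real processes under quotients with a nowhere-vanishing denominator, which positivity of $\mint$ licenses. The only cosmetic difference is that you establish computability of the numerator $s\mapsto\mint(sx)$ by reindexing the approximating net along the computable map $s\mapsto sx$, whereas the paper writes the numerator as $\mint(s)+\adddelta\mint(s)(x)$ and cites Proposition~\ref{prop:computable:from:delta}; these amount to the same thing.
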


\begin{proof}{\bf of Proposition~\ref{prop:computablemultiplier:from:delta}\quad}
Since $\mint$ is positive, it follows trivially that $\multprocess$ is positive as well. Consider now any $x\in\outcomes$. 
Since $\mint$ is computable, it follows from Proposition~\ref{prop:computable:from:delta} that $\adddelta\mint$ is computable, and therefore, we know that $\adddelta\mint(s)(x)$ is computable as well. 
Hence, since $\mint$ is computable and positive, we find that
\begin{equation*}
\multprocess(s)=\frac{\mint(s)+\adddelta\mint(s)(x)}{\mint(s)}
\end{equation*}
is computable.
\end{proof}\vspace{-20pt}

\subsection{Proofs and additional material for Section~\ref{sec:randomness}}
\label{app:randomness}
We denote by $\comptestsupermartins[\frcstsystem]$ the countable set of all computable test supermartingales for the forecasting system~$\frcstsystem$.

\begin{proof}{\bf of Proposition~\ref{prop:vacuous}\quad}
In the imprecise probability tree associated with the vacuous forecasting system~$\frcstsystem_\mathrm{v}$, a real process is a supermartingale if and only if it is non-increasing.
All test supermartingales are therefore bounded above by $1$ on any path $\pth\in\pths$. 
\end{proof}\vspace{-10pt}

\begin{proof}{\bf of Proposition~\ref{prop:nestedfrcstsystems}\quad}
Since $\frcstsystem\subseteq\frcstsystem^*$ implies that $\comptestsupermartins[\frcstsystem^*]\subseteq\comptestsupermartins[\frcstsystem]$, this follows trivially from Definition~\ref{def:randomness}.
\end{proof}\vspace{-20pt}

\begin{proposition}\label{prop:impreciseC:equivalence}
Consider any forecasting system $\frcstsystem$. 
Then for any outcome sequence $\pth$, the following statements are equivalent:
\begin{enumerate}[label=\upshape(\roman*),leftmargin=*,noitemsep,topsep=0pt]
\item\label{item:impreciseC:supermartin} $\sup_{n\in\naturals}\test(\pth^n)=+\infty$ for some computable non-negative supermartingale $\test$;
\item\label{item:impreciseC:testsupermartin} $\sup_{n\in\naturals}\test(\pth^n)=+\infty$ for some computable test supermartingale $\test$;
\item\label{item:impreciseC:testadddelta} $\sup_{n\in\naturals}\test(\pth^n)=+\infty$ for some test supermartingale $\test$, with $\adddelta\test$ computable;
\item\label{item:impreciseC:testmultdelta} $\sup_{n\in\naturals}\test(\pth^n)=+\infty$ for some test supermartingale $T=\mint$, with $\multprocess$ computable.
\end{enumerate}
\end{proposition}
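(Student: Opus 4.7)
The plan is to establish the four equivalences via the implications $(ii)\Rightarrow(i)$, $(ii)\Leftrightarrow(iii)$, $(iv)\Rightarrow(ii)$, $(i)\Rightarrow(ii)$, and $(ii)\Rightarrow(iv)$, the last being the only one requiring real work. The first is immediate because every test supermartingale is by definition a non-negative supermartingale.

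The equivalence $(ii)\Leftrightarrow(iii)$ is immediate from Proposition~\ref{prop:computable:from:delta}(iii): a test supermartingale has initial value $T(\init)=1$, which is computable, so $T$ itself is computable if and only if its process difference $\adddelta T$ is. The implication $(iv)\Rightarrow(ii)$ follows from Proposition~\ref{prop:computable:from:multiplier}(iii), which guarantees that $\mint=T$ is computable whenever the multiplier process $\multprocess$ is.

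For $(i)\Rightarrow(ii)$, the approach is a simple normalisation. Given a computable non-negative supermartingale $T$ with $\sup_n T(\pth^n)=+\infty$, set $T'\coloneqq(T+1)/(T(\init)+1)$. Since $T(\init)+1$ is a strictly positive computable real, $T'$ is a computable non-negative process with $T'(\init)=1$; its process difference is a positive scalar multiple of $\adddelta T$, so non-negative homogeneity~\ref{axiom:coherence:homogeneity} of $\uex$ preserves the supermartingale inequality. The transformation is affine with positive slope, so $\sup_n T'(\pth^n)=+\infty$ is inherited.

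The main obstacle is $(ii)\Rightarrow(iv)$: the naive definition $\multprocess(s)(x)\coloneqq T(sx)/T(s)$ breaks down wherever $T(s)=0$. To circumvent this, I would first pass to $\widetilde{T}\coloneqq(T+1)/2$, which the same normalisation argument shows to be a computable test supermartingale unbounded on $\pth$, and which now satisfies $\widetilde{T}\geq 1/2>0$ everywhere. Then $\multprocess(s)(x)\coloneqq\widetilde{T}(sx)/\widetilde{T}(s)$ defines a non-negative multiplier process that is computable by closure of computable real processes under division by a strictly positive computable process, and an easy induction on the length of $s$ shows $\mint=\widetilde{T}$. Writing $g(x)\coloneqq\widetilde{T}(sx)$ for the one-step successor gamble, the supermartingale inequality $\uex_{\frcstsystem(s)}(g-\widetilde{T}(s))\leq 0$ together with constant additivity~\ref{axiom:coherence:constantadditivity} gives $\uex_{\frcstsystem(s)}(g)\leq\widetilde{T}(s)$; dividing through by $\widetilde{T}(s)>0$ via non-negative homogeneity~\ref{axiom:coherence:homogeneity} yields $\uex_{\frcstsystem(s)}(\multprocess(s))\leq 1$, so $\multprocess$ is indeed a supermartingale multiplier and $\mint=\widetilde{T}$ realises~(iv).
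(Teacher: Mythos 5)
Your proof is correct and follows essentially the same route as the paper: normalise by an affine map to obtain a positive computable test supermartingale that is still unbounded on $\pth$, then recover the multiplier as the one-step quotient, whose computability and supermartingale-multiplier property follow exactly as in the paper's Proposition~\ref{prop:computablemultiplier:from:delta}. The only differences are cosmetic (you close the cycle of implications in a slightly different order and re-derive the quotient argument inline rather than citing that proposition).
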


\begin{proof}{\bf of Proposition~\ref{prop:impreciseC:equivalence}\quad}
Proposition~\ref{prop:computable:from:multiplier} implies that \ref{item:impreciseC:testmultdelta}$\then$\ref{item:impreciseC:testadddelta} and, since $T(\init)=1$ is rational and therefore computable, Proposition~\ref{prop:computable:from:delta} implies that \ref{item:impreciseC:testadddelta}$\then$\ref{item:impreciseC:testsupermartin}. 
%The equivalence of \ref{item:preciseML:martin} and~\ref{item:preciseML:supermartin} is well known; see for example *** CITE ***. 
Hence, since \ref{item:impreciseC:testsupermartin}$\then$\ref{item:impreciseC:supermartin} holds trivially, it suffices to prove that \ref{item:impreciseC:supermartin}$\then$\ref{item:impreciseC:testmultdelta}.

So consider any computable non-negative supermartingale $\test$ such that $\sup_{n\in\naturals}\test(\pth^n)=+\infty$. 
We will prove that there is a computable supermartingale multiplier $\multprocess$ such that $\sup_{n\in\naturals}\mint(\pth^n)=+\infty$. 
Let $\test'\coloneqq\nicefrac{1}{\alpha}(1+\test)$, with $\alpha\coloneqq1+\test(\init)$. 
Then $\alpha$ is clearly computable, and therefore, $\test'$ is computable as well. 
Also, since $\test$ is non-negative, we find that $\test'\geq\nicefrac{1}{\alpha}>0$ and $\test'(\init)=1$. 
Furthermore, since $\test$ is a supermartingale, $\test'$ is clearly a supermartingale as well. 
Finally, since $\smash{\sup_{n\in\naturals}\test(\pth^n)=+\infty}$, we have that $\smash{\sup_{n\in\naturals}\test'(\pth^n)=+\infty}$ as well. 
Hence, we find that $\test'$ is a computable test supermartingale such that $\test'>0$ and $\smash{\sup_{n\in\naturals}\test'(\pth^n)=+\infty}$. 
Hence, without loss of generality, we may assume that $\test$ is a positive test supermartingale. 
Since $\test$ is a positive test supermartingale, there is a unique supermartingale multiplier $\multprocess$ such that $\test=\mint$. 
Since $\test$ is positive and computable, the computability of $\multprocess$ follows from Proposition~\ref{prop:computablemultiplier:from:delta}.
\end{proof}\vspace{-16pt}

\begin{proposition}\label{prop:preciseC:equivalence}
Consider a computable precise forecasting system $\frcstsystem$. 
Then for any outcome sequence $\pth$, the following statements are equivalent:
\begin{enumerate}[label=\upshape(\roman*),leftmargin=*,noitemsep,topsep=0pt]
\item\label{item:preciseC:martin} $\sup_{n\in\naturals}\test(\pth^n)=+\infty$ for some computable non-negative martingale $\test$;
\item\label{item:preciseC:supermartin} $\sup_{n\in\naturals}\test(\pth^n)=+\infty$ for some computable non-negative supermartingale $\test$.
\end{enumerate}
\end{proposition}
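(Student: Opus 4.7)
The plan is to observe that (i)$\then$(ii) is immediate---every martingale is a supermartingale---so the work lies entirely in the converse direction. For (ii)$\then$(i) I will take any computable non-negative supermartingale $\test$ with $\sup_{n\in\naturals}\test(\pth^n)=+\infty$ and convert it into a computable non-negative \emph{martingale} with the same unbounded behaviour on $\pth$, by ``topping up'' each increment with the expectation defect the supermartingale condition allowed it to lose. The fact that the forecasting system is precise is what makes this symmetric correction available, and is exactly the point that fails for genuinely imprecise $\frcstsystem$.

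More concretely, I would write $p_s\coloneqq\lfrcstsystem(s)=\ufrcstsystem(s)$ for the single forecast at $s$; since $\frcstsystem$ is precise and computable, $p_s$ is a computable real process taking values in $\frcsts$. The supermartingale condition becomes
\begin{equation*}
\ex_{p_s}(\adddelta\test(s))
=p_s\adddelta\test(s)(1)+(1-p_s)\adddelta\test(s)(0)\leq 0
\text{ for all } s\in\sits,
\end{equation*}
so the defect $c_s\coloneqq-\ex_{p_s}(\adddelta\test(s))$ is non-negative. Proposition~\ref{prop:computable:from:delta} shows that $\adddelta\test$ is computable, and then closure of computability under sums, products and negation makes $c_s$ a computable real process. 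I then define a new process $M$ by $M(\init)\coloneqq\test(\init)$ and $\adddelta M(s)(x)\coloneqq\adddelta\test(s)(x)+c_s$ for all $s\in\sits$ and $x\in\outcomes$, viewing $c_s$ as the constant gamble with value $c_s$.

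It then remains to verify four properties of $M$. (a) \emph{Martingale}: by constant additivity, $\ex_{p_s}(\adddelta M(s))=\ex_{p_s}(\adddelta\test(s))+c_s=0$, so $\lex_{\frcstsystem(s)}(\adddelta M(s))=\uex_{\frcstsystem(s)}(\adddelta M(s))=0$ and $M$ is both a sub- and a supermartingale for $\frcstsystem$. (b) \emph{Non-negativity}: iterating the recursion gives $M(s)=\test(s)+\sum_{t\sprecedes s}c_t\geq\test(s)\geq0$, where the sum runs over proper precursors of $s$. (c) \emph{Unboundedness on $\pth$}: the same identity gives $M(\pth^n)\geq\test(\pth^n)$, so $\sup_n M(\pth^n)=+\infty$. (d) \emph{Computability}: $\adddelta M$ is a sum of two computable processes, and $M(\init)=\test(\init)$ is computable, so by Proposition~\ref{prop:computable:from:delta} $M$ is computable.

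There is no serious obstacle here---everything reduces to routine closure properties of computable processes and a careful bookkeeping of the defect. The only conceptual point worth flagging is that this argument genuinely uses precision of $\frcstsystem$: if $\lfrcstsystem(s)<\ufrcstsystem(s)$, adding a single constant $c_s$ across both outcomes cannot in general turn a supermartingale inequality $\uex_{\frcstsystem(s)}(\adddelta\test(s))\leq0$ into the two-sided equality required to make $M$ a sub- and supermartingale simultaneously, which is why the equivalence in Proposition~\ref{prop:preciseC:equivalence} is restricted to the precise case.
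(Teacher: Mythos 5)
Your proposal is correct and follows essentially the same route as the paper: both repair the supermartingale by adding back, at each situation $s$, the defect $-\ex_{\frcstsystem(s)}(\adddelta\test(s))\geq0$ to the increment, yielding a martingale that dominates the original process and is therefore non-negative and unbounded on $\pth$, with computability handled via Proposition~\ref{prop:computable:from:delta}. The only (immaterial) difference is that the paper first invokes Proposition~\ref{prop:impreciseC:equivalence} to pass to a test supermartingale with computable difference process before applying this correction, whereas you apply it directly to the given computable supermartingale.
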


\begin{proof}{\bf of Proposition~\ref{prop:preciseC:equivalence}\quad}
Since \ref{item:preciseC:martin}$\then$\ref{item:preciseC:supermartin} holds trivially, it suffices to prove that \ref{item:preciseC:supermartin}$\then$\ref{item:preciseC:martin}. 
So consider any computable non-negative supermartingale $\test'$ such that $\sup_{n\in\naturals}\test'(\pth^n)=+\infty$. 
We will prove that there is a computable non-negative martingale $\test$ such that $\sup_{n\in\naturals}\test(\pth^n)=+\infty$. 

Since $\test'$ is a computable non-negative supermartingale such that $\sup_{n\in\naturals}\test'(\pth^n)=+\infty$, it follows from Proposition~\ref{prop:impreciseC:equivalence} that there is a test supermartingale $\test''$, with $\adddelta\test''$ computable, such that $\sup_{n\in\naturals}\test''(\pth^n)=+\infty$. 
Now let $\test$ be the unique real process such that $\test(\init)=1$ and, for all $s\in\sits$:
\begin{align*}
\adddelta\test(s)(x)
\coloneqq&
\adddelta\test''(s)(x)-\ex_{\frcstsystem(s)}\big(\adddelta\test''(s)\big)\\
=&\adddelta\test''(s)(x)
-\frcstsystem(s)\adddelta\test''(s)(1)-\big(1-\frcstsystem(s)\big)\adddelta\test''(s)(0)
\text{ for all $x\in\outcomes$.}
\end{align*}
Since $\adddelta\test''$ and $\frcstsystem$ are computable, $\adddelta\test$ is clearly computable as well. 
Therefore, and because $\test(\init)$ is rational and therefore also computable, it follows from Proposition~\ref{prop:computable:from:delta} that $\test$ is computable. 
Furthermore, for any situation $s\in\sits$, we have that $\adddelta\test(s)\geq\adddelta\test''(s)$ because by assumption $\ex_{\frcstsystem(s)}(\adddelta\test''(s))\leq0$, and
\begin{align*}
\ex_{\frcstsystem(s)}\big(\adddelta\test(s)\big)
=\ex_{\frcstsystem(s)}\big(\adddelta\test''(s)-\ex_{\frcstsystem(s)}\big(\adddelta\test''(s)\big)\big)
=\ex_{\frcstsystem(s)}\big(\adddelta\test''(s)\big)-\ex_{\frcstsystem(s)}\big(\adddelta\test''(s)\big)
=0.
\end{align*}
Hence, it follows that $\test$ is a martingale and that $\adddelta\test\geq\adddelta\test''$.
Since $\test(\init)=\test''(\init)=1$, the latter implies that $\test\geq\test''$. 
Since $\test''$ is non-negative and $\sup_{n\in\naturals}\test''(\pth^n)=+\infty$, this in turn implies that $\test$ is non-negative and that $\sup_{n\in\naturals}\test(\pth^n)=+\infty$. Since we already know that $\test$ is a computable martingale, this establishes the desired result.
\end{proof}\vspace{-16pt}

\subsection{Proofs and additional material for Section~\ref{sec:consistency}}
\label{app:consistency}

\begin{proof}{\bf of Theorem~\ref{thm:consistency}\quad}
Consider the event $A\coloneqq\cset{\pth\in\pths}{\pth\text{ is computably random for $\frcstsystem$}}$.
We have to show that there is a test supermartingale that converges to $+\infty$ on $A^c$.

For every $\pth$ in $A^c$, there is some computable test supermartingale $\test_\pth$ that becomes unbounded on $\pth$.
Since the $\test_\pth$, $\pth\in A^c$ are countable in number, we can consider some countable convex combination $\test$ of all of them, with non-zero coefficients.
This is again a test supermartingale, that becomes unbounded on all $\pth$ in $A^c$.

We now construct yet another test supermartingale $\test'$ that converges to $+\infty$ on all $\pth$ where $\test$ becomes unbounded.
The argument has by now become standard \citep[Lemma~3.1]{shafer2001}. 
For any $n\in\naturals$, the real process $\test^{(n)}$ defined by
\begin{equation*}
\test^{(n)}(s)
\coloneqq
\begin{cases}
2^n&\text{ if $\test(t)\geq 2^n$ for some precursor $t\precedes s$ of $s$}\\
\test(s)&\text{ otherwise}
\end{cases}
\quad\text{for all $s\in\sits$},
\end{equation*}
is again a test supermartingale.
So is therefore the countable convex combination $\test'\coloneqq\sum_{n\in\naturals}2^{-n}\test^{(n)}$.
It is clear that $\test'$ converges to $+\infty$ on all paths $\pth$ where $\test$ becomes unbounded.
\end{proof}\vspace{-10pt}

\begin{proof}{\bf of Corollary~\ref{cor:consistency}\quad}
Consider the forecasting system $\frcstsystem$ defined by
\begin{equation*}
\frcstsystem(x_1,\dots,x_n)\coloneqq I_{n+1}
\text{ for all $(x_1,\dots,x_n)\in\outcomes^n$ and all $n\in\naturalswithzero$}.
\end{equation*}
Then it follows from Theorem~\ref{thm:consistency} that in the imprecise probability tree associated with $\frcstsystem$, (strictly) almost all $\pth$ are computably random for this $\frcstsystem$.
\end{proof}\vspace{-10pt}

In order to state our next set of results, we require some additional notions. Consider a real process $F\colon\sits\to\reals$ and a \emph{selection process} $\selection\colon\sits\to\outcomes$, and use them to define the real process $\average{F}\colon\sits\to\reals$ as follows:
\begin{multline*}
\average{F}(x_1,\dots,x_n)\\
\coloneqq
\begin{cases}
0
&\text{ if $\sum_{k=0}^{n-1}\selection(x_1,\dots,x_k)=0$}\\
\dfrac{\sum_{k=0}^{n-1}\selection(x_1,\dots,x_k)
\sqgroup{F(x_1,\dots,x_k,x_{k+1})-F(x_1,\dots,x_k)}}
{\sum_{k=0}^{n-1}\selection(x_1,\dots,x_k)} 
&\text{ if $\sum_{k=0}^{n-1}\selection(x_1,\dots,x_k)>0$},
\end{cases}
\end{multline*}
for all $n\in\naturalswithzero$ and $(x_1,\dots,x_n)\in\sits$.

In particular, fix any gamble $f$ on $\outcomes$, and let, for all $n\in\naturalswithzero$ and $(x_1,\dots,x_n)\in\sits$:
\begin{equation*}
\submartin^\frcstsystem_f(x_1,\dots,x_n)
\coloneqq\sum_{k=1}^{n}\sqgroup[\big]{f(x_k)-\lex_{\frcstsystem(x_1,\dots,x_{k-1})}(f)}
\end{equation*}
then on the one hand
\begin{align}
\adddelta\submartin^\frcstsystem_f(x_1,\dots,x_n)(x_{n+1})
&\coloneqq\submartin^\frcstsystem_f(x_1,\dots,x_{n+1})-\submartin^\frcstsystem_f(x_1,\dots,x_n)=f(x_{n+1})-\lex_{\frcstsystem(x_1,\dots,x_n)}(f),
\label{eq:lln:local:increments}
\end{align}
so $\adddelta\submartin^\frcstsystem_f(x_1,\dots,x_n)=f-\lex_{\frcstsystem(x_1,\dots,x_n)}(f)$, and therefore, on the other hand
\begin{equation}\label{eq:lln:local:increments:expectation}
\lex_{\frcstsystem(x_1,\dots,x_n)}(\adddelta\submartin^\frcstsystem_f(x_1,\dots,x_n))
=\lex_{\frcstsystem(x_1,\dots,x_n)}(f)-\lex_{\frcstsystem(x_1,\dots,x_n)}(f)
=0.
\end{equation}
We conclude that the real process $\submartin^\frcstsystem_f$ is a submartingale, whose differences $\adddelta\submartin^\frcstsystem_f(x_1,\dots,x_n)$ are uniformly bounded, for instance by $\varnorm{f}\coloneqq\max f-\min f$.
Observe by the way that in this particular case:
\begin{multline}\label{eq:average:over:selected:increments}
\average{\submartin^\frcstsystem_f}(x_1,\dots,x_n)\\
\coloneqq
\begin{cases}
0
&\text{ if $\sum_{k=0}^{n-1}\selection(x_1,\dots,x_k)=0$}\\
\dfrac{\sum_{k=0}^{n-1}\selection(x_1,\dots,x_k)
\sqgroup[\big]{f(x_{k+1})-\lex_{\frcstsystem(x_1,\dots,x_k)}(f)}}
{\sum_{k=0}^{n-1}\selection(x_1,\dots,x_k)} 
&\text{ if $\sum_{k=0}^{n-1}\selection(x_1,\dots,x_k)>0$}.
\end{cases}
\end{multline}

\begin{proposition}\label{prop:computable:lln:submartingale}
If the gamble $f$ on $\outcomes$ and the forecasting system $\frcstsystem$ are computable, then so is the submartingale $\submartin^\frcstsystem_f$.
\end{proposition}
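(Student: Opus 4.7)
The plan is to apply Proposition~\ref{prop:computable:from:delta}(iii) and thereby reduce computability of $\submartin^\frcstsystem_f$ to computability of its initial value $\submartin^\frcstsystem_f(\init)$ and of its process difference $\adddelta\submartin^\frcstsystem_f$. The initial value is the empty sum $\submartin^\frcstsystem_f(\init)=0$, which is a rational and hence computable number, so the only real work is in showing that $\adddelta\submartin^\frcstsystem_f$ is computable.

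By Equation~\eqref{eq:lln:local:increments}, the difference at situation $s=(\xvalto{n})$ evaluated at $x\in\outcomes$ equals $f(x)-\lex_{\frcstsystem(s)}(f)$. Since $f$ is a computable gamble on $\outcomes$, the two constants $f(0)$ and $f(1)$ are computable real numbers. Since $\frcstsystem$ is a computable forecasting system and $f$ is a computable gamble, Proposition~\ref{prop:computable:expectation:processes} guarantees that the real process $s\mapsto\lex_{\frcstsystem(s)}(f)$ is computable.

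Putting these two facts together, for each $x\in\outcomes$ the real process $s\mapsto\adddelta\submartin^\frcstsystem_f(s)(x)=f(x)-\lex_{\frcstsystem(s)}(f)$ is the difference of a computable constant real process and a computable real process, and is therefore itself computable (sums and differences of computable real processes are computable, as recalled after Proposition~\ref{prop:computable:closure}). Hence $\adddelta\submartin^\frcstsystem_f$ is computable in the sense defined just before Proposition~\ref{prop:computable:expectation:processes}, and applying Proposition~\ref{prop:computable:from:delta}(iii) yields the computability of $\submartin^\frcstsystem_f$.

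There is no real obstacle here: the argument is a straightforward bookkeeping exercise assembling the closure properties of computable real processes together with the already-established computability of the local lower expectation process $s\mapsto\lex_{\frcstsystem(s)}(f)$. The only subtlety worth flagging is the need to treat $\adddelta\submartin^\frcstsystem_f$ as a pair of real processes indexed by $x\in\outcomes$, as stipulated in the definition of computability of a process difference given just before Proposition~\ref{prop:computable:expectation:processes}; once this is in place, Proposition~\ref{prop:computable:from:delta} does all the remaining work.
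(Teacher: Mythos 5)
Your proof is correct and follows essentially the same route as the paper's: both reduce the claim to the computability of $\submartin^\frcstsystem_f(\init)=0$ and of the process difference $\adddelta\submartin^\frcstsystem_f(s)=f-\lex_{\frcstsystem(s)}(f)$, the latter resting on the computability of the process $s\mapsto\lex_{\frcstsystem(s)}(f)$. The only cosmetic difference is that the paper invokes the packaged closure result Proposition~\ref{prop:computable:from:delta:productorsum} while you apply Proposition~\ref{prop:computable:from:delta}(iii) directly.
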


\begin{proof}{\bf of Proposition~\ref{prop:computable:lln:submartingale}\quad}
We use Proposition~\ref{prop:computable:from:delta:productorsum}:  $\submartin^\frcstsystem_f(\init)=0$ is obviously a computable real number; let $\processtoo$ be the computable constant real process $f(1)$, $\processtoo'$ the computable constant real process $f(0)$, and $\processalso$  the real process defined by $\processalso(s)=-\lex_{\frcstsystem(s)}(f)$ for all $s\in\sits$, computable by Proposition~\ref{prop:computable:expectations}.
% Older version
% So, using Proposition~\ref{prop:computable:simplified}, we have to prove that for every $s\in\sits$ and $x\in\outcomes$, there is a computable sequence of rational numbers $r_{s,n}^x$ such that $\abs{r_{s,n}^x-\adddelta\process(s)(x)}\leq2^{-n}$. 
% This is obvious if we recall from Equation~\eqref{eq:lln:local:increments} that $\adddelta\process(s)(x)=f(x)-\lex_{\frcstsystem(s)}(f)$, and infer from the assumptions and Proposition~\ref{prop:computable:expectations} that $f(x)$ and $\lex_{\frcstsystem(s)}(f)$ are computable real numbers.
\end{proof}
\noindent
We can now apply our law of large numbers for submartingale differences \citep[Theorem~7]{cooman2015:markovergodic} to get to the following result, which generalises Philip Dawid's well-known consistency result for Bayesian Forecasters~\citep[Theorem~1]{dawid1982:well:calibrated:bayesian}, to deal with imprecise assessments: 

\begin{theorem}[The well-calibrated imprecise Bayesian]\label{thm:well:calibrated}
Let $\frcstsystem\colon\sits\to\imprecisefrcsts$ be any forecasting system, let $\selection\colon\sits\to\outcomes$ be any selection process, and let $f$ be any gamble on $\outcomes$. Then
\begin{equation*}
\sum_{k=0}^n\selection(X_1,\dots,X_k)\to+\infty\then\liminf_{n\to+\infty}\average{\submartin^\frcstsystem_f}(X_1,\dots,X_n)\geq0
\end{equation*}
(strictly) almost surely, in the imprecise probability tree associated with the forecasting system $\frcstsystem$.
\end{theorem}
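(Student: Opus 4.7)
The plan is to reduce the statement directly to the game-theoretic law of large numbers for submartingale differences established as Theorem~7 in \citep{cooman2015:markovergodic}. The preparatory work needed for that reduction has essentially already been done in the text preceding the theorem, so the proof amounts to checking that the hypotheses of the earlier LLN are satisfied and then invoking it.

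First I would record that $\submartin^\frcstsystem_f$ is a submartingale for $\frcstsystem$ whose process differences are uniformly bounded. The submartingale property is Equation~\eqref{eq:lln:local:increments:expectation}: in every situation $(x_1,\dots,x_n)$, the local lower expectation of $\adddelta\submartin^\frcstsystem_f(x_1,\dots,x_n)$ equals $0$. Uniform boundedness is immediate from Equation~\eqref{eq:lln:local:increments}, because $f(x_{n+1})$ and $\lex_{\frcstsystem(x_1,\dots,x_n)}(f)$ both lie in $[\min f,\max f]$ by property~\ref{axiom:coherence:bounds}, so $\abs{\adddelta\submartin^\frcstsystem_f(x_1,\dots,x_n)(x_{n+1})}\leq\varnorm{f}$ for all situations and both outcomes.

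Next I would observe, via Equation~\eqref{eq:average:over:selected:increments}, that the process $\average{\submartin^\frcstsystem_f}$ is, up to the harmless convention $\average{\submartin^\frcstsystem_f}=0$ on the null-denominator branch, exactly the $\selection$-weighted running average of the submartingale differences $\adddelta\submartin^\frcstsystem_f$ of $\submartin^\frcstsystem_f$. This is precisely the quantity whose liminf Theorem~7 of \citep{cooman2015:markovergodic} controls.

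Now I would apply that result to the submartingale $\submartin^\frcstsystem_f$ and the selection process $\selection$ in the imprecise probability tree generated by $\frcstsystem$. Its hypotheses are met: the local upper expectations $\uex_{\frcstsystem(s)}$ are coherent by Proposition~\ref{prop:properties:of:expectations}, $\submartin^\frcstsystem_f\in\submartins[\frcstsystem]$ has uniformly bounded differences by the first step, and $\selection\colon\sits\to\outcomes$ is by definition adapted to the event tree. The conclusion of that theorem, transported through the identification made in the second step, is exactly the desired implication, strictly almost surely. The only mildly delicate point — the one I would be most careful about — is the behaviour on the set where $\sum_{k=0}^n\selection(X_1,\dots,X_k)$ remains bounded; this set is excluded by the hypothesis of the implication, so the convention $\average{\submartin^\frcstsystem_f}=0$ there is irrelevant to the liminf statement and the reduction goes through without residue.
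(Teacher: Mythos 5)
Your proposal is correct and matches the paper's own route: the authors explicitly introduce the theorem as an application of their law of large numbers for submartingale differences (Theorem~7 of \citet{cooman2015:markovergodic}), after establishing exactly the two facts you check, namely that $\submartin^\frcstsystem_f$ is a submartingale with differences uniformly bounded by $\varnorm{f}$ (Equations~\eqref{eq:lln:local:increments} and~\eqref{eq:lln:local:increments:expectation}) and that $\average{\submartin^\frcstsystem_f}$ is the $\selection$-weighted average of those differences (Equation~\eqref{eq:average:over:selected:increments}). The only reason the paper writes out the LLN's proof in full---the explicit test supermartingales $\process^{(r)}$ from Lemma~\ref{lem:martinwlln} and their countable convex combination---is to have the construction available for the computability analysis in Theorem~\ref{thm:well:calibrated:general}, which your reduction by citation does not need for the present statement.
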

\noindent
We repeat the proof here, borrowed from one of our earlier papers~\citep[Theorem~7]{cooman2015:markovergodic} and suitably adapted to include results on computability, because it will next help us prove a related result---Theorem~\ref{thm:well:calibrated:general}---that will turn out to be crucial in establishing the main claim of this paper.
One important step in this proof is, stripped to its bare essentials, based on a surprisingly elegant and effective idea that goes back to \citet[Lemma~3.3]{shafer2001}.

\begin{proof}{\bf of Theorem~\ref{thm:well:calibrated}\quad}
Consider the events $D\coloneqq\cset{\pth\in\pths}{\lim_{n\to+\infty}\sum_{k=0}^{n-1}\selection(\pth^k)=+\infty}$ and the event $A\coloneqq\cset{\pth\in\pths}{\liminf_{n\to+\infty}\average{\submartin^\frcstsystem_f}(\pth^n)<0}$.
We have to show that there is some test supermartingale $\test$ that converges to $+\infty$ on the set $D\cap A$.
Let\/ $B\coloneqq\max\{1,\varnorm{f}\}>0$, then we infer from Equation~\eqref{eq:lln:local:increments:expectation} that $B$ is a uniform real bound on $\Delta\submartin^\frcstsystem_f$, meaning that $\abs{\Delta\submartin^\frcstsystem_f(s)}\leq B$ for all situations $s\in\sits$. 

For any $r\in\naturals$, let $A_r\coloneqq\cset[\big]{\pth\in\pths}{\liminf_{n\to+\infty}\average{\submartin^\frcstsystem_f}(\pth^n)<-\frac{1}{2^r}}$, then $A=\bigcup_{r\in\naturals}A_r$.
So fix any $r\in\naturals$ and consider any $\pth\in D\cap A_r$, then we have in particular that
\begin{equation*}
\liminf_{n\to+\infty}\,\average{\submartin^\frcstsystem_f}(\pth^{n})<-\frac{1}{2^r},
\end{equation*}
and therefore
\begin{equation*}
(\forall m\in\naturals)
(\exists n_m\geq m)
\average{\submartin^\frcstsystem_f}(\pth^{n_m})<-\frac{1}{2^r}=-\epsilon,
\end{equation*}
with $0<\epsilon\coloneqq\frac{1}{2^r}<B$. 
Consider now the test supermartingale $\process_\submartin$ of Lemma~\ref{lem:martinwlln}, with in particular $\submartin\coloneqq\submartin^\frcstsystem_f$ and $\xi=\xi_r\coloneqq\frac{\epsilon}{2B^2}=\frac{1}{2^{r+1}B^2}=\frac{1}{2^{r+1}B}\frac{1}{B}<\frac{1}{B}$.
We denote it by $\process^{(r)}$.
It follows from Lemma~\ref{lem:martinwlln} that
\begin{equation}\label{eq:slln:intermediate}
\process^{(r)}(\pth^{n_m})
\geq\exp\group[\bigg]{\frac{\epsilon^2}{4B^2}\sum_{k=0}^{n_m-1}\selection(\pth^{k})}
=\exp\group[\bigg]{\frac{1}{2^{2r+2}B^2}\sum_{k=0}^{n_m-1}\selection(\pth^{k})}
\text{ for all $m\in\naturals$}.
\end{equation}
Consider any real $R>0$ and $m\in\naturals$. 
Since also $\pth\in D$, we know that $\lim_{n\to+\infty}\sum_{k=0}^{n-1}\selection(\pth^{k})=+\infty$, so there is some natural number $m'\geq m$ such that $\exp\group[\big]{\frac{1}{2^{2r+2}B^2}\sum_{k=0}^{m'-1}\selection(\pth^{k})}>R$.
Hence it follows from the statement in \eqref{eq:slln:intermediate} that there is some $n_{m'}\geq m'\geq m$---whence $\sum_{k=0}^{n_{m'}-1}\selection(\pth^{k})\geq\sum_{k=0}^{m'-1}\selection(\pth^{k})$---such that
\begin{equation*}
\process^{(r)}(\pth^{n_{m'}})
\geq\exp\group[\bigg]{\frac{1}{2^{2r+2}B^2\sum_{k=0}^{n_{m'}-1}\selection(\pth^{k})}}
\geq\exp\group[\bigg]{\frac{1}{2^{2r+2}B^2\sum_{k=0}^{m'-1}\selection(\pth^{k})}}
>R,
\end{equation*}
which implies that $\limsup_{n\to+\infty}\process^{(r)}(\pth^n)=+\infty$.
Observe that for this test supermartingale, 
\begin{equation*}
\process^{(r)}(\xvalto{n})\leq\group{\frac{3}{2}}^{n}
\text{ for all $n\in\naturalswithzero$ and $\xvalto{n}\in\outcomes^{n}$}.
\end{equation*}

Now define the process $\smash{\test^\frcstsystem_f\coloneqq\sum_{r\in\naturals}w^{(r)}\process^{(r)}}$ as any countable convex combination of the $\process^{(r)}$ constructed above, with positive weights $w^{(r)}>0$ that sum to one.
This is a real process, because each term in the series for $\test^\frcstsystem_f(\xvalto{n})$ is non-negative, and moreover 
\begin{multline}\label{eq:auxiliary:supermartingale:bounded}
\test^\frcstsystem_f(\xvalto{n})\leq\sum_{r\in\naturals}w^{(r)}\process^{(r)}
\leq\sum_{r\in\naturals}w^{(r)}\group[\Big]{\frac{3}{2}}^{n}
=\group[\Big]{\frac{3}{2}}^{n}\\
\text{ for all $n\in\naturalswithzero$ and $(\xvalto{n})\in\outcomes^{n}$}.
\end{multline}
This process is also positive, has $\test^\frcstsystem_f(\init)=1$, and, for any $\pth\in D\cap A$, it follows from the argumentation above that there is some $r\in\naturals$ such that $\pth\in D\cap A_r$ and therefore
\begin{equation}\label{eq:auxiliary:supermartingale:unbounded}
\limsup_{n\to+\infty}\test^\frcstsystem_f(\pth^n)
\geq w^{(r)}\limsup_{n\to+\infty}\process^{(r)}(\pth^n)
=+\infty,
\end{equation}
so $\limsup_{n\to+\infty}\test^\frcstsystem_f(\pth^n)=+\infty$.

We now prove that $\smash{\test^\frcstsystem_f}$ is a supermartingale, and therefore also a test supermartingale.
Consider any $n\in\naturalswithzero$ and any $(\xvalto{n})\in\outcomes^n$, then we have to prove that $\lex_{\frcstsystem(\xvalto{n})}(-\Delta\test^\frcstsystem_f(\xvalto{n}))\geq0$.
Since it follows from the argumentation in the proof of Lemma~\ref{lem:martinwlln} that
\begin{equation*}
-\Delta\process^{(r)}(\xvalto{n})
=\frac{1}{2^{r+1}B^2}\process^{(r)}(\xvalto{n})\Delta\submartin^\frcstsystem_f(\xvalto{n})
\text{ for all $r\in\naturals$}, 
\end{equation*}
we see that
\begin{equation}\label{eq:delta:test}
-\Delta\test^\frcstsystem_f(\xvalto{n})
=-\sum_{r\in\naturals}w^{(r)}\Delta\process^{(r)}
=\Delta\submartin^\frcstsystem_f(\xvalto{n})
\underbrace{\sum_{r\in\naturals}\frac{w^{(r)}}{2^{r+1}B^2}\process^{(r)}(\xvalto{n})}_{\eqqcolon C(\xvalto{n})},
\end{equation}
where $C(\xvalto{n})\geq0$ must be a real number, because, using a similar argument as before
\begin{equation*}
C(\xvalto{n})
=\sum_{r\in\naturals}\frac{w^{(r)}}{2^{r+1}B^2}\process^{(r)}(\xvalto{n})
\leq L\sum_{r\in\naturals}w^{(r)}\process^{(r)}(\xvalto{n})
\leq L\group[\Big]{\frac{3}{2}}^{n}
\end{equation*}
for some real $L>0$.
Therefore indeed, using the non-negative homogeneity of lower expectations:
\begin{align*}
\lex_{\frcstsystem(\xvalto{n})}(-\Delta\test^\frcstsystem_f(\xvalto{n}))
&=\lex_{\frcstsystem(\xvalto{n})}(C(\xvalto{n})\Delta\submartin^\frcstsystem_f(\xvalto{n}))\\
&=C(\xvalto{n})\lex_{\frcstsystem(\xvalto{n})}(\Delta\submartin^\frcstsystem_f(\xvalto{n}))
\geq0,
\end{align*}
because $\submartin^\frcstsystem_f$ is a submartingale; see Equation~\eqref{eq:lln:local:increments:expectation}.

Since we now know that $\test^\frcstsystem_f$ is a supermartingale that is furthermore bounded below (by $0$) it follows from our supermartingale convergence theorem \citep{cooman2015:markovergodic} that there is some test supermartingale that converges to $+\infty$ on all paths where $\test^\frcstsystem_f$ does not converge to a real number, and therefore in particular on all paths in $D\cap A$.
Hence $D\cap A$ is indeed strictly null.
\end{proof}\vspace{-20pt}

\begin{lemma}\label{lem:martinwlln}
Consider any real $B>0$ and $0<\xi<\frac{1}{B}$.
Let\/ $\submartin$ be any submartingale such that $\vert\adddelta\submartin\vert\leq B$.
Let $\selection$ be any real process that only assumes values in $\outcomes$.
Then the process $\process_\submartin$ defined by:
\begin{multline}\label{eq:auxiliary:supermartingale}
\process_\submartin(\xvalto{n})
\coloneqq\prod_{k=0}^{n-1} 
\sqgroup[\big]{1-\xi\selection(\xvalto{k})\adddelta\submartin(\xvalto{k})(\xval{k+1})}\\
\text{for all $n\in\naturalswithzero$ and $(\xvalto{n})\in\outcomes^{n}$}
\end{multline}
is a positive supermartingale with $\process_\submartin(\init)=1$, and therefore in particular a test supermartingale.
Moreover, for $\xi\coloneqq\frac{\epsilon}{2B^2}$, with $0<\epsilon<B$, we have that 
\begin{multline*}
\average{\submartin^\frcstsystem_f}(\xvalto{n})\leq-\epsilon
\then
\process_\submartin(\xvalto{n})
\geq\exp\group[\bigg]{\frac{\epsilon^2}{4B^2}\sum_{k=0}^{n-1}\selection(\xvalto{k})}\\
\text{for all $n\in\naturalswithzero$ and $(\xvalto{n})\in\outcomes^{n}$}.
\end{multline*}
Finally, if $\xi$ and $\submartin$ are computable, and $\selection$ is computable, then $\process_\submartin$ is computable as well.
\end{lemma}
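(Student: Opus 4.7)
The plan is to verify the four claims in the order they are stated: positivity together with $\process_\submartin(\init)=1$, the supermartingale inequality, the exponential lower bound, and computability. The unifying observation is that $\process_\submartin$ coincides with $\mint[\multprocess]$ for the multiplier process $\multprocess(s)(x)\coloneqq 1-\xi\selection(s)\adddelta\submartin(s)(x)$, which lets me cash in the machinery of Proposition~\ref{prop:computable:from:multiplier} at the end.

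For positivity, I would note that $\selection(s)\in\outcomes$ and $\abs{\adddelta\submartin}\leq B$ give $\abs{\xi\selection(s)\adddelta\submartin(s)(x)}\leq\xi B<1$, so every factor in the defining product is strictly positive; the empty product convention then gives $\process_\submartin(\init)=1$. For the supermartingale property, the product structure yields
\[
\adddelta\process_\submartin(\xvalto{n})=-\process_\submartin(\xvalto{n})\xi\selection(\xvalto{n})\adddelta\submartin(\xvalto{n}),
\]
in which the leading scalar $\process_\submartin(\xvalto{n})\xi\selection(\xvalto{n})$ is non-negative. Non-negative homogeneity~\ref{axiom:coherence:homogeneity} then gives $\uex_{\frcstsystem(\xvalto{n})}(\adddelta\process_\submartin(\xvalto{n}))=\process_\submartin(\xvalto{n})\xi\selection(\xvalto{n})\uex_{\frcstsystem(\xvalto{n})}(-\adddelta\submartin(\xvalto{n}))\leq 0$, using $\uex_I(-f)=-\lex_I(f)$ and the submartingale property of $\submartin$.

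For the exponential bound with $\xi=\epsilon/(2B^2)$, I would first check that $\xi B=\epsilon/(2B)\leq\nicefrac{1}{2}$ by virtue of $\epsilon<B$, so the arguments $y_k\coloneqq\xi\selection(\xvalto{k})\adddelta\submartin(\xvalto{k})(\xval{k+1})$ all lie in $[-\nicefrac{1}{2},\nicefrac{1}{2}]$ and the elementary inequality $\ln(1-y)\geq -y-y^2$, valid on that range, applies termwise. Taking logs and using $\selection(\xvalto{k})^2=\selection(\xvalto{k})$ (since $\selection$ takes values in $\outcomes$) together with $\adddelta\submartin(\xvalto{k})(\xval{k+1})^2\leq B^2$, I obtain
\[
\ln\process_\submartin(\xvalto{n})\geq -\xi\sum_{k=0}^{n-1}\selection(\xvalto{k})\adddelta\submartin(\xvalto{k})(\xval{k+1})-\xi^2B^2\sum_{k=0}^{n-1}\selection(\xvalto{k}).
\]
The hypothesis $\average{\submartin}(\xvalto{n})\leq -\epsilon$ forces $\sum_{k=0}^{n-1}\selection(\xvalto{k})>0$ (a strictly negative average is incompatible with the defining convention $0$) and delivers $\sum_{k}\selection(\xvalto{k})\adddelta\submartin(\xvalto{k})(\xval{k+1})\leq -\epsilon\sum_{k}\selection(\xvalto{k})$; substituting this bound in leaves the coefficient $\xi\epsilon-\xi^2B^2=\epsilon^2/(4B^2)$, and exponentiating yields the claimed inequality.

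For computability, the multiplier representation does the work: Proposition~\ref{prop:computable:from:delta} turns computability of $\submartin$ into computability of $\adddelta\submartin$, then closure of computability under sum and product, together with the computability of the constant $\xi$ and of $\selection$, makes $\multprocess$ computable, and Proposition~\ref{prop:computable:from:multiplier} transfers computability to $\mint[\multprocess]=\process_\submartin$. The main obstacle is the exponential bound step, where one must simultaneously keep $\abs{y_k}\leq\nicefrac{1}{2}$ (which is precisely what the coupling $\xi=\epsilon/(2B^2)$ and $\epsilon<B$ deliver) and correctly exploit the $\selection_k^2=\selection_k$ collapse so that the quadratic remainder $\xi^2B^2\sum\selection_k$ aligns dimensionally with the linear term; everything else is a careful but routine unfolding of definitions.
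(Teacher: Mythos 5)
Your proposal is correct and follows essentially the same route as the paper: identify $\process_\submartin$ as $\mint[\multprocess]$ for the multiplier $\multprocess(s)=1-\xi\selection(s)\adddelta\submartin(s)$, verify positivity from $\xi B<1$, check the supermartingale property via non-negative homogeneity and the submartingale inequality for $\submartin$, derive the exponential bound from $\ln(1+x)\geq x-x^2$ on $x>-\nicefrac{1}{2}$ together with $\selection^2=\selection$, and transfer computability through Propositions~\ref{prop:computable:from:delta} and~\ref{prop:computable:from:multiplier}. The only cosmetic difference is that you verify $\uex_{\frcstsystem(s)}(\adddelta\process_\submartin(s))\leq0$ directly rather than checking the multiplier condition $\uex_{\frcstsystem(s)}(\multprocess(s))\leq1$, and you make explicit the (correct) observation that the hypothesis $\average{\submartin}(\xvalto{n})\leq-\epsilon$ rules out $\sum_{k=0}^{n-1}\selection(\xvalto{k})=0$.
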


\begin{proof}{\bf of Lemma~\ref{lem:martinwlln}\quad}
$\process_\submartin(\init)=1$ trivially.
To prove that $\process_\submartin$ is positive, consider any $n\in\naturals$ and any $(\xvalto{n})\in\outcomes^{n}$.
Since it follows from $0<\xi B<1$, $\vert\adddelta\submartin\vert\leq B$ and $\selection\in\outcomes$ that $1-\xi\selection(\xvalto{k})\adddelta\submartin(\xvalto{k})(\xval{k+1})\geq 1-\xi B>0$ for all $0\leq k\leq n-1$, we see that indeed:
\begin{equation*}
\process_\submartin(\xvalto{n})
=\prod_{k=0}^{n-1}\sqgroup[\big]{1-\xi\selection(\xvalto{k})\adddelta\submartin(\xvalto{k})(\xval{k+1})}
>0.
\end{equation*}
This also tells us that if we let
\begin{equation}\label{eq:multprocess:submartingale}
\multprocess_\submartin(s)\coloneqq1-\xi\selection(s)\adddelta\submartin(s)>0,
\end{equation}
then $\multprocess_\submartin$ is a multiplier process, and $\process_\submartin=\mint[\multprocess_\submartin]$.
Moreover, since $\xi>0$ and $\selection(s)\in\outcomes$, we infer from the coherence and conjugacy properties of lower and upper expectations that 
\begin{align*}
\uex_{\frcstsystem(s)}(\multprocess_\submartin)
&=\uex_{\frcstsystem(s)}\group[\big]{1-\xi(s)\selection(s)\adddelta\submartin(s)}\\
&=1+\uex_{\frcstsystem(s)}\group[\big]{-\xi(s)\selection(s)\adddelta\submartin(s)}
=1-\xi(s)\selection(s)\lex_{\frcstsystem(s)}\group[\big]{\adddelta\submartin(s)}\leq1,
\end{align*}
where the inequality follows from $\lex_{\frcstsystem(s)}\group{\adddelta\submartin(s)}\geq0$, because we assumed that $\submartin$ is a submartingale.
This shows that $\multprocess_\submartin$ is a supermartingale multiplier, and therefore $\process_\submartin=\mint[\multprocess_\submartin]$ is indeed a test supermartingale.

For the second statement, consider any $0<\epsilon<B$ and let $\xi\coloneqq\frac{\epsilon}{2B^2}$. 
Then for any $n\in\naturalswithzero$ and $\xvalto{n}\in\outcomes^{n}$ such that $\average{\submartin^\frcstsystem_f}(\xvalto{n})\leq-\epsilon$, we have for all real $K$:
\begin{align}
\process_\submartin(\xvalto{n})\geq\exp(K) 
&\ifandonlyif
\prod_{k=0}^{n-1} 
\sqgroup[\big]{1-\xi\selection(\xvalto{k})\adddelta\submartin(\xvalto{k})(\xval{k+1})}\geq\exp(K)\notag\\
&\ifandonlyif
\sum_{k=0}^{n-1} 
\ln\sqgroup[\big]{1-\xi\selection(\xvalto{k})\adddelta\submartin(\xvalto{k})(\xval{k+1})}\geq K.\label{eq:K}
\end{align}
Since $\vert\adddelta\submartin\vert\leq B$, $\selection\in\outcomes$ and $0<\epsilon<B$, we find that
\begin{equation*}
-\xi\selection(\xvalto{k})\adddelta\submartin(\xvalto{k})
\geq-\xi B
=-\frac{\epsilon}{2B}
>-\frac{1}{2}
\quad\text{ for $0\leq k\leq n-1$}. 
\end{equation*} 
As $\ln(1+x)\ge x-x^2$ for $x>-\frac{1}{2}$, this allows us to infer that
\begin{multline*}
\sum_{k=0}^{n-1} 
\ln\sqgroup[\big]{1-\xi\selection(\xvalto{k})\adddelta\submartin(\xvalto{k})(\xval{k+1})}\\
\begin{aligned}
&\geq
\sum_{k=0}^{n-1} 
\sqgroup[\big]{-\xi\selection(\xvalto{k})\adddelta\submartin(\xvalto{k})(\xval{k+1})
-\xi^2\selection(\xvalto{k})^2(\adddelta\submartin(\xvalto{k})(\xval{k+1}))^2}\\
&=
-\xi\sum_{k=0}^{n-1}\selection(\xvalto{k})\average{\submartin^\frcstsystem_f}(\xvalto{n})
-\xi^2\sum_{k=0}^{n-1} 
\selection(\xvalto{k})(\adddelta\submartin(\xvalto{k})(\xval{k+1}))^2\\
&\geq\xi\sum_{k=0}^{n-1}\selection(\xvalto{k})\epsilon
-\xi^2\sum_{k=0}^{n-1}\selection(\xvalto{k})B^2\\
&=\xi(\epsilon-\xi B^2)\sum_{k=0}^{n-1}\selection(\xvalto{k})
=\frac{\epsilon^2}{4B^2}\sum_{k=0}^{n-1}\selection(\xvalto{k}),
\end{aligned}
\end{multline*}
where the first equality holds because $\selection^2=\selection$. 
Now choose $K\coloneqq\frac{\epsilon^2}{4B^2}\sum_{k=0}^{n-1}\selection(\xvalto{k})$ in Equation~\eqref{eq:K}.

We now prove the last statement, dealing with the computability of $\process_\submartin$.

Since $\submartin$ is assumed to be computable, so is $\adddelta\submartin$, by Proposition~\ref{prop:computable:from:delta}.
Since also $\xi$ and $\selection$ are assumed to be computable, we infer from Equation~\eqref{eq:multprocess:submartingale} that the multiplier process $\multprocess_\submartin$ is computable too.
If we now invoke Proposition~\ref{prop:computable:from:multiplier}, we find that $\process_\submartin=\mint[\multprocess_\submartin]$ is therefore computable as well.  
\end{proof}\vspace{-10pt}

\begin{proof}{\bf of Theorem~\ref{thm:well:calibrated:general}\quad}
Assume {\itshape ex absurdo} that the inequality is not satisfied.
Then we infer from the proof of Theorem~\ref{thm:well:calibrated} that there is some positive test supermartingale $\test^\frcstsystem_f=\sum_{r\in\naturals}w^{(r)}\process^{(r)}$ that is unbounded above on $\pth$: see Equation~\eqref{eq:auxiliary:supermartingale:unbounded} there.
We will now go back to the details of that proof to show that we can make sure that $\test^\frcstsystem_f=\mint$ for some computable multiplier process $\multprocess$, thereby contradicting the assumed computable randomness for $\frcstsystem$.

First of all, recall that for any $r\in\naturals$, the test supermartingale $\process^{(r)}$ is the test supermartingale $\process_\submartin$ constructed in Lemma~\ref{lem:martinwlln}, for the particular choices $\submartin=\submartin^\frcstsystem_f$, $B=\max\set{1,\varnorm{f}}$ and $\xi=\xi_r=\frac{1}{2^{r+1}B^2}$. 
Since the gamble $f$ is assumed to be computable, so is the real number $\varnorm{f}=\abs{f(1)-f(0)}$, and therefore also the real numbers $B$ and $\xi_r$.
We infer from Proposition~\ref{prop:computable:lln:submartingale} that the submartingale $\submartin^\frcstsystem_f$ is computable, because the forecasting system $\frcstsystem$ and the gamble $f$ are.
Since in addition $\selection$ is assumed to be computable, we infer from Lemma~\ref{lem:martinwlln} that the test supermartingale $\process^{(r)}$ is computable.

We now consider the version of the test supermartingale $\test^\frcstsystem_f=\sum_{r\in\naturals}w^{(r)}\process^{(r)}$ corresponding to the particular choices $w^{(r)}\coloneqq2^{-r}$, and prove that $\test^\frcstsystem_f$ is computable. 
% We recall from Equation~\eqref{eq:delta:test} in the proof of Theorem~\ref{thm:well:calibrated} that for all $m\in\naturalswithzero$ and $(\xvalto{m}\in\sits)$:
% \begin{equation*}
% \Delta\test^\frcstsystem_f(\xvalto{m})
% =-\Delta\submartin^\frcstsystem_f(\xvalto{n})C(\xvalto{m}),
% \end{equation*}
% with 
% \begin{equation}\label{eq:auxiliary:constant}
% C(\xvalto{m})
% =\sum_{r\in\naturals}\frac{w^{(r)}}{2^{r+1}B^2}\process^{(r)}(\xvalto{m})
% =\sum_{r\in\naturals}\frac{1}{2^{2r+1}B^2}\process^{(r)}(\xvalto{m}).
% \end{equation}
% If we now observe that the real number $\test^\frcstsystem_f(\init)=1$ is computable, and recall from Proposition~\ref{prop:computable:lln:submartingale} that so is the submartingale $\submartin^\frcstsystem_f$, we infer from Proposition~\ref{prop:computable:from:delta:productorsum} that it suffices to prove that the non-negative real process $C$ is computable.
To this effect, we use Proposition~\ref{prop:computable:closure}.
Indeed, consider, for each $s=(\xvalto{m})\in\sits$ and $n\in\naturalswithzero$, the real number
\vspace{-5pt}
\begin{equation*}
x_{s,n}\coloneqq\sum_{r=1}^n2^{-r}\process^{(r)}(s),
\end{equation*}
which is computable because all real processes $\process^{(r)}$ are.
Since $0\leq\process^{(r)}(s)\leq\group[\big]{\frac{3}{2}}^m$, we get
\begin{equation*}
\abs{\test^\frcstsystem_f(s)-x_{s,n}}
=\sum_{r=n+1}^{+\infty}2^{-r}\process^{(r)}(s)
\leq\group[\Big]{\frac{3}{2}}^m\sum_{r=n+1}^{+\infty}2^{-r}
=\group[\Big]{\frac{3}{2}}^m2^{-n}.
\end{equation*}
If we now let $e(s,N)\coloneqq N+m$, then since
\begin{equation*}
N+m\geq N+m\log_2\frac{3}{2},
\end{equation*}  
we see that $n\geq e(s,N)$ implies $\abs{\test^\frcstsystem_f(s)-x_{s,n}}\leq2^{-N}$ for all $s\in\sits$ and $n,N\in\naturalswithzero$.
Since the function $e$ is clearly computable, we may use Proposition~\ref{prop:computable:closure} to conclude that $\test^\frcstsystem_f$ is indeed computable.

Now let $\multprocess$ be the unique supermartingale multiplier such that $\test^\frcstsystem_f=\mint$ [the uniqueness follows from the fact that $\test^\frcstsystem_f$ is positive]. Since $\test^\frcstsystem_f$ is computable, it then follows from Proposition~\ref{prop:computablemultiplier:from:delta} that $\multprocess$ is computable as well.
\end{proof}\vspace{-20pt}

\subsection{Proofs and additional material for Section~\ref{sec:constantintervalforecasts}}
\label{app:constantintervalforecasts}

\begin{proof}{\bf of Proposition~\ref{prop:constantcalibrated:top}\quad}
Immediate consequence of Proposition~\ref{prop:vacuous}, with $\constantfrcst{\frcsts}=\frcstsystem_\mathrm{v}\in\random{\mathrm{C}}$.
\end{proof}

\begin{proof}{\bf of Proposition~\ref{prop:constantcalibrated:increasing}\quad}
This follows from Proposition~\ref{prop:nestedfrcstsystems}, because $I\subseteq J$ implies $\constantfrcst{I}\subseteq\constantfrcst{J}$.
\end{proof}

\begin{proof}{\bf of Corollary~\ref{cor:well:calibrated:constant}\quad}
First, assume that $I$ is computable.
It follows from Proposition~\ref{prop:IcompIffGammaComp} that $\constantfrcst{I}$ is computable as well. 
Furthermore, if we let $\indsing{1}(x)\coloneqq x$ for all $x\in\outcomes$, then $\indsing{1}$ and $-\indsing{1}$ are clearly computable gambles on $\outcomes$. 
The first and last inequality now follow from Theorem~\ref{thm:well:calibrated:general}, by choosing $f=\indsing{1}$ and $f=-\indsing{1}$, respectively, since $\lex_I(\indsing{1})=\lp$ and $\lex_I(-\indsing{1})=-\up$. 
The second inequality is a standard property of limits inferior and superior.

If $I$ is not computable, then for any $\epsilon>0$, since all rational numbers are computable, there is some computable $J=[\underline{q},\overline{q}]\in\imprecisefrcsts$ such that $\smash{\lp-\epsilon\leq\underline{q}\leq\lp\leq\up\leq\overline{q}\leq\up+\epsilon}$. 
Since $I\subseteq J$, it follows from Proposition~\ref{prop:constantcalibrated:increasing} that also $J\in\constantrandom{\mathrm{C}}$. 
Since, moreover, $J$ is computable, it follows from the first part of the proof that
\begin{equation*}
\lp-\epsilon\leq\underline{q}
\leq\liminf_{n\to+\infty}
\frac{\sum_{k=0}^{n-1}S(x_1,\dots,x_k)x_{k+1}}{\sum_{k=0}^{n-1}S(x_1,\dots,x_k)}
\leq\limsup_{n\to+\infty}
\frac{\sum_{k=0}^{n-1}S(x_1,\dots,x_k)x_{k+1}}{\sum_{k=0}^{n-1}S(x_1,\dots,x_k)}
\leq\overline{q}\leq\up+\epsilon.
\end{equation*}
Since $\epsilon>0$ is arbitrary, this completes the proof.
\end{proof}\vspace{-10pt}

\begin{proof}{\bf of Proposition~\ref{prop:constantcalibrated:nonempty:intersection:inside:ML}\quad}
For the first statement, let $I=\pinterval$ and $J=\qinterval$ and assume {\itshape ex absurdo} that $I\cap J=\emptyset$.
We may assume without loss of generality that $\lp>\uptoo$. 
It then follows from Corollary~\ref{cor:well:calibrated:constant}, with $S(s)\coloneqq1$ for all $s\in\sits$, that
\begin{equation*}
\liminf_{n\to+\infty}\frac{1}{n}\sum_{k=1}^nx_k
\leq
\limsup_{n\to+\infty}\frac{1}{n}\sum_{k=1}^nx_k
\leq
\overline{q}
<
\underline{p}
\leq
\liminf_{n\to+\infty}\frac{1}{n}\sum_{k=1}^nx_k,
\end{equation*}
a contradiction.

For the second statement, let $K\coloneqq I\cap J$.
We will prove that $K\in\constantrandom{\mathrm{C}}$. 
Again, let $I=\pinterval$ and $J=\qinterval$. 
Because of symmetry, we may assume without loss of generality that $\smash{\lptoo\leq\lp}$. Furthermore, due to the first statement, we know that then $\lp\leq\uptoo$. 
If we have that $I\subseteq J$, then $I=I\cap J$ and therefore, since $I\in\constantrandom{\mathrm{C}}$, the result holds trivially. Hence, we may assume without loss of generality that $\smash{\lptoo<\lp\leq\uptoo<\up}$, which implies that $K=I\cap J=[\lp,\uptoo]$.

Consider any computable test supermartingale $\test$ in $\comptestsupermartins[\constantfrcst{K}]$, then we must show that $\test$ remains bounded on $\pth$. 
We may assume without loss of generality that there is some computable supermartingale multiplier $\multprocess$ for $\constantfrcst{K}$ such that $\test=\mint$. 

Now let $\multprocess_I$ be the map from situations to gambles on $\outcomes$, defined by
\begin{equation*}
\multprocess_I(s)(z)
\coloneqq
\begin{cases}
\min\set{\multprocess(s)(1),1}
&\text{if $z=1$}\\
\max\set{\multprocess(s)(0),1}
&\text{if $z=0$}\\
\end{cases}
\quad\text{for all $s\in\sits$ and $z\in\outcomes$.}
\end{equation*}
Then $\multprocess_I$ is a supermartingale multiplier for $\constantfrcst{I}$: that it is  non-negative follows from the non-negativity of $\multprocess$; moreover, for any $s\in\sits$, we have that $\uex_I(\multprocess_I(s))\leq1$. 
Indeed, to prove the this, we consider two cases: $\multprocess(s)(0)\leq1$ and $\multprocess(s)(0)>1$. 
If $\multprocess(s)(0)\leq1$, then $\multprocess_I(s)\leq1$ and therefore also $\uex_{I}(\multprocess_I(s))\leq1$, by~\ref{axiom:coherence:bounds}. 
The case that $\multprocess(s)(0)>1$ is a bit more involved. 
For a start, since $\multprocess(s)(0)>1$ implies that $\multprocess_I(s)(1)<\multprocess_I(s)(0)$, we find that
\begin{equation*}
\uex_{I}(\multprocess_I(s))
=\ex_{\lp}(\multprocess_I(s))
=\uex_{K}(\multprocess_I(s)).
\end{equation*}
Furthermore, since by assumption $\uex_K(\multprocess(s))\leq1$, $\multprocess(s)(0)>1$ implies that $\multprocess(s)(1)\leq1$, again by~\ref{axiom:coherence:bounds}.
We therefore find that $\multprocess(s)=\multprocess_I(s)$. 
By combining these two findings, it follows that indeed here also
\begin{equation*}
\uex_{I}(\multprocess_I(s))
=\uex_{K}(\multprocess_I(s))
=\uex_{K}(\multprocess(s))
\leq1.
\end{equation*}
Since $\multprocess_I$ is indeed a supermartingale multiplier for $\constantfrcst{I}$, we find that $\test_I\coloneqq\mint[\multprocess_I]$ is a test supermartingale for $\constantfrcst{I}$.

Furthermore, since $\multprocess$ is computable, so is $\multprocess_I$, because taking minima and maxima preserves computability. 
Hence, $\test_I$ belongs to $\comptestsupermartins[\constantfrcst{I}]$. 
Therefore, and because $I\in\constantrandom{\mathrm{C}}$, it follows that $\test_I(\pth^n)$ remains bounded as $n\to+\infty$.

Also, if we let $\multprocess_J$ be a map from situations to gambles on $\outcomes$, defined by
\begin{equation*}
\multprocess_J(s)(z)
\coloneqq
\begin{cases}
\max\{\multprocess(s)(1),1\}
&\text{if $z=1$}\\
\min\{\multprocess(s)(0),1\}
&\text{if $z=0$}
\end{cases}
\quad\text{for all $s\in\sits$ and $z\in\outcomes$},
\end{equation*}
and consider $\test_J\coloneqq\mint[\multprocess_J]$, a similar course of reasoning leads us to conclude that $\test_J$ belongs to $\comptestsupermartins[\constantfrcst{J}]$. 
Therefore, and because $J\in\constantrandom{\mathrm{C}}$, it follows that $\test_J(\pth^n)$ remains bounded as $n\to+\infty$.

Next, we observe that $\multprocess=\multprocess_I\multprocess_J$, and therefore also $\test=\mint=\mint[\multprocess_I]\mint[\multprocess_J]=\test_I\test_J$.
And since both $\test_I$ and $\test_J$ remain bounded on $\pth$, so, therefore, does $\test$.
\end{proof}\vspace{-16pt}

\begin{proof}{\bf of Proposition~\ref{prop:constantcalibrated:computable:sequence}\quad}
When $\pth$ is computable and has infinitely many zeroes and ones, there is a computable selection process $\selection_1$ that selects all the ones, with $\sum_{k=0}^{n-1}\selection_1(x_1,\dots,x_k)\to+\infty$, and another computable selection process $\selection_0=1-\selection_1$ that selects all the zeroes, with $\sum_{k=0}^{n-1}\selection_0(x_1,\dots,x_k)\to+\infty$.
For any $I\in\constantrandom{\mathrm{C}}$, we then infer from Corollary~\ref{cor:well:calibrated:constant} that
\begin{equation*}
\min I
\leq\liminf_{n\to+\infty}\frac{\sum_{k=0}^{n-1}\selection_0(x_1,\dots,x_k)x_{k+1}}
{\sum_{k=0}^{n-1}\selection_0(x_1,\dots,x_k)}
=0,
\end{equation*}
and similarly
\begin{equation*}
\max I
\geq\limsup_{n\to+\infty}\frac{\sum_{k=0}^{n-1}\selection_1(x_1,\dots,x_k)x_{k+1}}
{\sum_{k=0}^{n-1}\selection_1(x_1,\dots,x_k)}
=1,
\end{equation*}
so indeed $I=[0,1]$.
\end{proof}\vspace{-16pt}

\subsection{Proofs and additional material for Section~\ref{sec:inherently}}
\label{app:inherently}

\begin{proof}{\bf of Proposition~\ref{prop:first:example}\quad}
The converse implication follows at once from Proposition~\ref{prop:nestedfrcstsystems} and the fact that for any $I\in\imprecisefrcsts$ such that $[p,q]\subseteq I$, the stationary forecasting system $\constantfrcst{I}$ is more conservative than $\frcstsystem_{p,q}$, in the sense that $\frcstsystem_{p,q}\subseteq\constantfrcst{I}$.

For the direct implication, assume that $I\in\constantrandom{\mathrm{C}}$ and fix any $\epsilon>0$. Since all rational numbers are computable, there are computable intervals $\smash{[\underline{p},\overline{p}]\in\imprecisefrcsts}$ and $\smash{[\underline{q},\overline{q}]\in\imprecisefrcsts}$ such that
\begin{equation*}
p\in[\underline{p},\overline{p}]\subseteq[p-\epsilon,p+\epsilon]
\text{ and }
q\in[\underline{q},\overline{q}]\subseteq[q-\epsilon,q+\epsilon].
\end{equation*} 
Consider now the forecasting system $\frcstsystem_\epsilon$, defined by
\begin{equation*}
\frcstsystem_\epsilon(z_1,\dots,z_n)
\coloneqq
\begin{cases}
[\underline{p},\overline{p}] &\text{if $n$ is odd}\\
[\underline{q},\overline{q}] &\text{if $n$ is even}
\end{cases}
\quad\text{for all $(z_1,\dots,z_n)\in\sits$.}
\end{equation*}
Then $\frcstsystem_\epsilon$ is clearly computable and, since $\frcstsystem_{p,q}\subseteq\frcstsystem_\epsilon$, we know from Proposition~\ref{prop:nestedfrcstsystems} that $\pth$ is computably random for $\frcstsystem_\epsilon$. 
Therefore, we find that
\begin{equation*}
\min I
\leq\liminf_{n\to+\infty}\frac{\sum_{k=1}^nx_{2k}}{n}
\leq\limsup_{n\to+\infty}\frac{\sum_{k=1}^nx_{2k}}{n}
\leq\overline{p}
\leq p+\epsilon
\end{equation*}
where the first and third inequality follow from Corollary~\ref{cor:well:calibrated:constant} and Theorem~\ref{thm:well:calibrated:general}, respectively, for appropriately chosen computable selection processes.
Similarly, we also find that
\begin{equation*}
\max I
\geq\limsup_{n\to+\infty}\frac{\sum_{k=1}^nx_{2k-1}}{n}
\geq\liminf_{n\to+\infty}\frac{\sum_{k=1}^nx_{2k-1}}{n}
\geq\underline{q}
\geq q-\epsilon.
\end{equation*}
Since $\epsilon>0$ is arbitrary, this allows us to conclude that $\min I\leq p$ and $\max I\geq q$, and, therefore, that $[p,q]\subseteq I$.
\end{proof}\vspace{-10pt}

That $\delta_n\downarrow0$ follows readily by combining Equations~\eqref{eq:pnoneminus:one} and~\eqref{eq:pnoneminus:two} below.
Also observe, for all $n\in\naturals$, that
\begin{align}
p_n(1-p_n)
&=\group[\Big]{\frac{1}{2}-\delta_n}\group[\Big]{\frac{1}{2}+\delta_n}
=\frac{1}{4}-\delta_n^2
\label{eq:pnoneminus:one}\\
&=\frac{1}{4}-e^{-\frac{2}{n+1}}\group[\big]{e^{\frac{1}{n+1}}-1}
=\frac{1}{4}-e^{-\frac{1}{n+1}}+e^{-\frac{2}{n+1}}
=\group[\Big]{e^{-\frac{1}{n+1}}-\frac{1}{2}}^2,
\label{eq:pnoneminus:two}
\end{align}
and therefore, since $e^{-\frac{1}{n+1}}>\frac{1}{2}$, that
\begin{equation}\label{eq:divergence:example:one}
e^{\frac{1}{2(n+1)}}\frac{1}{\sqrt2}\left(\sqrt{p_n}+\sqrt{1-p_n}\right)
=e^{\frac{1}{2(n+1)}}\sqrt{\frac{1}{2}+\sqrt{p_n(1-p_n)}}
=e^{\frac{1}{2(n+1)}}\sqrt{e^{-\frac{1}{n+1}}}=1.
\end{equation}

\begin{proof}{\bf of Proposition~\ref{prop:second:example}\quad}
We first show that $\set{\nicefrac{1}{2}}\notin\constantrandom{\mathrm{C}}$, so the sequence is not computably random. 
Consider the multiplier processes $\multprocess_{\nicefrac{1}{2}}$ and $\multprocess_{\sim\nicefrac{1}{2}}$, defined for all $n\in\naturals$ and $(z_1,\dots,z_{n-1})\in\sits$ by
\begin{equation*}
\multprocess_{\nicefrac{1}{2}}(z_1,\dots,z_{n-1})
\coloneqq e^{\frac{1}{2(n+1)}}\sqrt{2\pmass[n]}
\text{ and }
\multprocess_{\sim\nicefrac{1}{2}}(z_1,\dots,z_{n-1})
\coloneqq e^{\frac{1}{2(n+1)}}\frac{1}{\sqrt{2\pmass[n]}},
\end{equation*}
where we define, for any $p\in\frcsts$, the corresponding mass function $\pmass$ by letting $\pmass(1)\coloneqq p$ and $\pmass(0)\coloneqq1-p$. We then find that
\begin{align}\label{eq:explodes:too}
&\ln\big(\mint[\multprocess_{\nicefrac{1}{2}}](\xvalto{n})
\mint[\multprocess_{\sim\nicefrac{1}{2}}](\xvalto{n})\big)%\notag\\
=\sum_{k=1}^n\frac{1}{k+1}\to+\infty
\text{ as $n\to+\infty$}.
\end{align}
Furthermore, taking into account Equation~\eqref{eq:divergence:example:one}, it is then easy to verify that for all $n\in\naturals$ and $\smash{(z_1,\dots,z_{n-1})\in\sits}$:
\begin{equation*}
\ex_{\nicefrac{1}{2}}\big(\multprocess_{\nicefrac{1}{2}}(z_1,\dots,z_{n-1})\big)
=\frac{1}{2}e^{\frac{1}{2(n+1)}}\sqrt{2p_{n}}
+\frac{1}{2}e^{\frac{1}{2(n+1)}}\sqrt{2(1-p_{n})}
=1
\end{equation*}
and
\begin{equation*}
\ex_{p_n}\big(\multprocess_{\sim\nicefrac{1}{2}}(z_1,\dots,z_{n-1})\big)
=p_ne^{\frac{1}{2(n+1)}}\frac{1}{\sqrt{2p_n}}
+(1-p_n)e^{\frac{1}{2(n+1)}}\frac{1}{\sqrt{2(1-p_n)}}
=1.
\end{equation*}
Hence, we find that $\multprocess_{\nicefrac{1}{2}}$ is a supermartingale multiplier for the stationary forecasting system $\constantfrcst{\set{\nicefrac{1}{2}}}$ and that $\multprocess_{\sim\nicefrac{1}{2}}$ is a supermartingale multiplier for the forecasting system $\xmplfrcstsystem$. 
Both of these supermartingale multipliers are furthermore clearly computable.
Since $\multprocess_{\sim\nicefrac{1}{2}}$ is a computable supermartingale multiplier for the forecasting system $\xmplfrcstsystem$, it follows by assumption that $\mint[\multprocess_{\sim\nicefrac{1}{2}}](\xvalto{n})$ remains bounded as $n\to+\infty$. 
Therefore, and taking into account Equation~\eqref{eq:explodes:too}, we find that $\mint[\multprocess_{\nicefrac{1}{2}}](\xvalto{n})\to+\infty$ as $n\to+\infty$. 
Since $\multprocess_{\nicefrac{1}{2}}$ is a computable supermartingale multiplier for the stationary forecasting system $\constantfrcst{\set{\nicefrac{1}{2}}}$, this implies that indeed $\set{\nicefrac{1}{2}}\notin\constantrandom{\mathrm{C}}$.

In a similar way, for all $\epsilon>0$, it can also be shown that $\sqgroup{\nicefrac{1}{2}-\epsilon,\nicefrac{1}{2}}\notin\constantrandom{\mathrm{C}}$ and $\sqgroup{\nicefrac{1}{2},\nicefrac{1}{2}+\epsilon}\notin\constantrandom{\mathrm{C}}$. 
We sketch the proof for $\sqgroup{\nicefrac{1}{2}-\epsilon,\nicefrac{1}{2}}\notin\constantrandom{\mathrm{C}}$.
The proof for $\sqgroup{\nicefrac{1}{2},\nicefrac{1}{2}+\epsilon}\notin\constantrandom{\mathrm{C}}$ is completely analogous. 
Let the multiplier process $\multprocess_{\nicefrac{1}{2},-}$ be defined for all $n\in\naturals$ and $(z_1,\dots,z_{n-1})\in\sits$ by
\begin{equation*}
\multprocess_{\nicefrac{1}{2},-}(z_1,\dots,z_{n-1})
\coloneqq
\begin{cases}
\multprocess_{\nicefrac{1}{2}}(z_1,\dots,z_{n-1})&\text{for $n$ even}\\
1&\text{for $n$ odd},
\end{cases}
\end{equation*}
and similarly for $\multprocess_{\sim\nicefrac{1}{2},-}$. 
Then
\begin{align*}
&\ln\big(\mint[\multprocess_{\nicefrac{1}{2},-}](\xvalto{n})
\mint[\multprocess_{\sim\nicefrac{1}{2},-}](\xvalto{n})\big)%\notag\\
=\sum_{k=1}^{\lfloor\nicefrac{n}{2}\rfloor}\frac{1}{2k+1}\to+\infty
\text{ as $n\to+\infty$}.
\end{align*}
If we now fix any $n\in\naturals$ and $\smash{(z_1,\dots,z_{n-1})\in\sits}$, then
\begin{equation*}
\uex_{[\nicefrac{1}{2}-\epsilon,\nicefrac{1}{2}]}\big(\multprocess_{\nicefrac{1}{2},-}(z_1,\dots,z_{n-1})\big)
=
\begin{cases}
\ex_{\nicefrac{1}{2}}\big(\multprocess_{\nicefrac{1}{2}}(z_1,\dots,z_{n-1})\big)=1
&\text{for $n$ even}\\
\uex_{\nicefrac{1}{2}}(1)=1
&\text{for $n$ odd},
\end{cases}
\end{equation*}
because for even $n$, it follows from $\pmass[n](1)=p_n>1-p_n=\pmass[n](0)$ that also $\multprocess_{\nicefrac{1}{2}}(z_1,\dots,z_{n-1})(1)>\multprocess_{\nicefrac{1}{2}}(z_1,\dots,z_{n-1})(0)$.
Similarly, 
\begin{equation*}
\ex_{p_n}\big(\multprocess_{\sim\nicefrac{1}{2},-}(z_1,\dots,z_{n-1})\big)
=
\begin{cases}
\ex_{p_n}\big(\multprocess_{\sim\nicefrac{1}{2}}(z_1,\dots,z_{n-1})\big)=1
&\text{for $n$ even}\\
\ex_{p_n}(1)=1
&\text{for $n$ odd}.
\end{cases}
\end{equation*}
This tells us that $\multprocess_{\nicefrac{1}{2},-}$ is a supermartingale multiplier for the stationary forecasting system $\constantfrcst{\sqgroup{\nicefrac{1}{2}-\epsilon,\nicefrac{1}{2}}}$, and that $\multprocess_{\sim\nicefrac{1}{2},-}$ is a supermartingale multiplier for the forecasting system $\xmplfrcstsystem$. Since $\multprocess_{\nicefrac{1}{2},-}$ and $\multprocess_{\sim\nicefrac{1}{2},-}$ are clearly computable, the rest of the proof is now similar to that of $\set{\nicefrac{1}{2}}\notin\constantrandom{\mathrm{C}}$.

Finally, we show that for any $\epsilon_1,\epsilon_2\in(0,\nicefrac{1}{2}]$, $I_{\epsilon_1,\epsilon_2}\coloneqq\sqgroup{\nicefrac{1}{2}-\epsilon_1,\nicefrac{1}{2}+\epsilon_2}\in\constantrandom{\mathrm{C}}$.
Assume {\itshape ex absurdo} that $I_{\epsilon_1,\epsilon_2}\notin\constantrandom{\mathrm{C}}$ for some $\epsilon_1,\epsilon_2\in(0,\nicefrac{1}{2}]$, meaning that there is some computable supermartingale multiplier $\multprocess_{\epsilon_1,\epsilon_2}$ for the stationary forecasting system $I_{\epsilon_1,\epsilon_2}$ such that $\mint[\multprocess_{\epsilon_1,\epsilon_2}]$ is unbounded on $\pth=(x_1,\dots,x_n,\dots)$. 
Consider any rational number $\alpha$ such that $0<\alpha\leq\min\set{\epsilon_1,\epsilon_2}$ and let $n_\alpha$ be the smallest natural number $n\in\naturals$ such that $\delta_n\leq\alpha$ and therefore $p_n\in I_{\epsilon_1,\epsilon_2}$, or, using Equations~\eqref{eq:pnoneminus:one} and~\eqref{eq:pnoneminus:two}, let
\begin{equation*}
n_\alpha=
\bigg\lceil-\frac{1}{\ln\frac{1+\sqrt{1-4\alpha^2}}{2}}-1\bigg\rceil.\vspace{6pt}
\end{equation*} 
Then since $\alpha$ is rational and therefore computable, $n_\alpha$ is computable as well. We now consider a new multiplier process $\multprocess$, defined by
\begin{equation*}
\multprocess(z_1,\dots,z_{n-1})
\coloneqq
\begin{cases}
1&\text{if $n<n_\alpha$}\\
\multprocess_{\epsilon_1,\epsilon_2}(z_1,\dots,z_{n-1})
&\text{if $n\geq n_\alpha$}
\end{cases}
\quad\text{for all $n\in\naturals$ and $(z_1\dots,z_{n-1})\in\sits$.}
\end{equation*}
Since $\multprocess_{\epsilon_1,\epsilon_2}$ is computable and $n_\alpha$ is computable, $\multprocess$ is clearly computable. 
Furthermore, since $p_{n}\in I_{\epsilon_1,\epsilon_2}$ for $n\geq n_\alpha$, we find for all $n\in\naturals$ and $(z_1\dots,z_{n-1})\in\sits$ that
\begin{equation*}
\ex_{p_n}\big(\multprocess(z_1,\dots,z_{n-1})\big)
\leq
\begin{cases}
\ex_{p_n}(1)=1
&\text{if $n<n_\alpha$}\\
\uex_{I_{\epsilon_1,\epsilon_2}}\big(\multprocess_{\epsilon_1,\epsilon_2}(z_1,\dots,z_{n-1})\big)\leq1
&\text{if $n\geq n_\alpha$,}
\end{cases}
\end{equation*}
which implies that $\multprocess$ is a computable supermartingale multiplier for the forecasting system $\xmplfrcstsystem$. 
By construction, $\mint$ is unbounded on $\pth$, simply because $\mint[\multprocess_{\epsilon_1,\epsilon_2}]$ is unbounded on $\pth$. 
This contradicts the fact that $\pth$ is computably random for $\xmplfrcstsystem$.
\end{proof}

\end{document}